\long\def\symbolfootnote[#1]#2{\begingroup%
\def\thefootnote{\fnsymbol{footnote}}\footnote[#1]{#2}\endgroup}
\qed\vspace{5pt}}
\newtheoremstyle{lause}
{5pt}
{5pt}
{\slshape}
{\parindent}
{\bfseries}
{.}
{.5em}
{}
\theoremstyle{lause}
\newtheoremstyle{maaritelma}
{5pt}
{5pt}
{\rmfamily}
{\parindent}
{\bfseries}
{.}
{.5em}
{}
\theoremstyle{maaritelma}
\newtheoremstyle{lause}
{5pt}
{5pt}
{\slshape}
{\parindent}
{\bfseries}
{.}
{.5em}
{}
\theoremstyle{lause}
\newtheorem{theorem}{Theorem}[section]
\newtheorem{lemma}[theorem]{Lemma}
\newtheorem{corollary}[theorem]{Corollary}
\newtheorem{problem}[theorem]{Problem}
\newtheoremstyle{maaritelma}
{5pt}
{5pt}
{\rmfamily}
{\parindent}
{\bfseries}
{.}
{.5em}
{}
\theoremstyle{maaritelma}
\newtheorem{definition}[theorem]{Definition}
\newtheorem{example}[theorem]{Example}
\newtheorem{remark}[theorem]{Remark}
\newtheorem{open}[theorem]{Open question}
\numberwithin{equation}{section}
\DeclareMathOperator*{\essinf}{ess\,inf}
\begin{document}

\thispagestyle{empty}

\begin{center}

{\large{\textbf{Fractional harmonic measure in minimum Riesz energy problems with external fields}}}

\vspace{18pt}

\textbf{Natalia Zorii}

\vspace{18pt}


\footnotesize{\address{Institute of Mathematics, Academy of Sciences
of Ukraine, Tereshchenkivska~3, 01601,
Kyiv-4, Ukraine\\
natalia.zorii@gmail.com }}

\end{center}

\vspace{12pt}

{\footnotesize{\textbf{Abstract.} For the Riesz kernel $\kappa_\alpha(x,y):=|x-y|^{\alpha-n}$ on $\mathbb R^n$, where $n\geqslant2$, $\alpha\in(0,2]$, and $\alpha<n$, we consider the problem of minimizing the Gauss functional
\[\int\kappa_\alpha(x,y)\,d(\mu\otimes\mu)(x,y)+2\int f_{q,z}\,d\mu,\quad\text{where $f_{q,z}:=-q\int\kappa_\alpha(\cdot,y)\,d\varepsilon_z(y)$},\]
$q$ being a positive number, $\varepsilon_z$ the unit Dirac measure at $z\in\mathbb R^n$, and $\mu$ ranging all probability measures of finite energy, concentrated on quasiclosed $A\subset\mathbb R^n$. For any
$z\in A^u\cup(\mathbb R^n\setminus{\rm Cl}_{\mathbb R^n}A)$, where $A^u$ is the set of all inner $\alpha$-ultrairregular points for $A$ (the concept of $\alpha$-ultrairregularity being newly introduced), we provide necessary and sufficient conditions for the existence of the minimizer $\lambda_{A,f_{q,z}}$, establish its alternative characterizations, and describe its support, thereby discovering new interesting phenomena. In detail, $z\in\partial_{\mathbb R^n}A$ is said to be inner $\alpha$-ul\-t\-ra\-irregular if the inner $\alpha$-har\-mo\-n\-ic measure $\varepsilon_z^A$ of $A$ is of finite energy. We show that for any $z\in A^u\cup(\mathbb R^n\setminus{\rm Cl}_{\mathbb R^n}A)$, $\lambda_{A,f_{q,z}}$ exists if and only if either $A$ is of finite inner capacity, or $q\geqslant H_z$, where $H_z:=1/\varepsilon_z^A(\mathbb R^n)\in[1,\infty)$. Thus, for any closed $A$, any $z\in A^u$, and any $q\geqslant H_z$~--- even arbitrarily large, no compensation effect occurs between the two oppositely signed charges, $-q\varepsilon_z$ and $\lambda_{A,f_{q,z}}$, carried by the same conductor $A$, which at the first glance seems to contradict our physical intuition. Another interesting phenomenon is that, if closed $A$ has unbounded boundary, then for any $z\in A^u\cup(\mathbb R^n\setminus A)$, the support of $\lambda_{A,f_{H_z,z}}$ is noncompact, whereas that of $\lambda_{A,f_{H_z+\beta,z}}$ is already compact for any $\beta\in(0,\infty)$~--- even arbitrarily small. The results obtained substantially improve some of the latest ones on the problem in question, e.g.\ those by Dragnev et al.\ (Constr.\ Approx., 2023), and are illustrated by means of some examples.
}}
\symbolfootnote[0]{\quad 2010 Mathematics Subject Classification: Primary 31C15.}
\symbolfootnote[0]{\quad Key words: Minimum Riesz energy problems with external fields; inner Riesz balayage; inner Riesz equilibrium measure; inner $\alpha$-har\-mon\-ic measure; inner $\alpha$-reg\-u\-l\-ar, $\alpha$-ir\-r\-eg\-ul\-ar, and $\alpha$-ul\-t\-ra\-ir\-reg\-ular points; inner $\alpha$-thin\-n\-ess and $\alpha$-ul\-t\-ra\-thin\-ness of a set at infinity.
}

\vspace{6pt}

\markboth{\emph{Natalia Zorii}} {\emph{Fractional harmonic measure in minimum Riesz energy problems with external fields}}

\section{Introduction and statement of the problem}\label{sec-alpha} This paper deals with problems of minimizing the energy of positive Radon measures $\mu$ on $\mathbb R^n$, $n\geqslant2$, with respect to the $\alpha$-Riesz kernel $\kappa_\alpha(x,y):=|x-y|^{\alpha-n}$, where $\alpha\in(0,n)$ and $\alpha\leqslant2$, evaluated in the presence of the external field $f_{q,z}$ created by an attractive mass $q\in(0,\infty)$ placed at a fixed point $z\in\mathbb R^n$. That is,
\begin{equation*}
f_{q,z}(y):=-q|z-y|^{\alpha-n},\quad y\in\mathbb R^n,
\end{equation*}
and hence the energy in question (usually referred to as {\it the Gauss functional} or, in constructive function theory, {\it the $f_{q,z}$-wei\-g\-h\-ted energy}) is given by the formula
\begin{equation}\label{ext2}I_{f_{q,z}}(\mu):=\int\kappa_\alpha(x,y)\,d(\mu\otimes\mu)(x,y)+2\int f_{q,z}(y)\,d\mu(y),\end{equation}
provided that the value on right is well defined (as a finite number or $\pm\infty$).

The main tool in this research is the inner $\alpha$-har\-mon\-ic measure $\varepsilon_z^A$ of $A\subset\mathbb R^n$, along with the closely related concept of inner $\alpha$-Riesz equilibrium measure $\gamma_A$ treated in an extended sense where its total mass as well as its energy might be infinite.

To define admissible measures in the minimum $f_{q,z}$-wei\-g\-h\-ted energy problem we are interested in, we recall some basic facts of the theory of $\alpha$-Riesz potentials \cite{L}.

We denote by $\mathfrak M$ the linear space of all re\-al-val\-u\-ed (Radon) measures $\mu$ on $\mathbb R^n$, equipped with the {\it vague} topology of pointwise convergence on the class $C_0(\mathbb R^n)$ of all continuous functions $\varphi:\mathbb R^n\to\mathbb R$ of compact support, and by $\mathfrak M^+$ the cone of all positive $\mu\in\mathfrak M$, where $\mu$ is {\it positive} if and only if $\mu(\varphi)\geqslant0$ for all positive $\varphi\in C_0(\mathbb R^n)$.

For any $\mu,\nu\in\mathfrak M$, the {\it mutual energy} $I(\mu,\nu)$ and the {\it potential} $U^\mu$ are given by
\begin{align*}
 I(\mu,\nu)&:=\int\kappa_\alpha(x,y)\,d(\mu\otimes\nu)(x,y),\\
 U^\mu(x)&:=\int\kappa_\alpha(x,y)\,d\mu(y),\quad x\in\mathbb R^n,
\end{align*}
respectively, provided that the value on right is well defined. For $\mu=\nu$, the mutual energy $I(\mu,\nu)$ defines the {\it energy} $I(\mu):=I(\mu,\mu)$.

When speaking of a positive measure $\mu\in\mathfrak M^+$, we understand that its potential $U^\mu$ is not identically infinite on $\mathbb R^n$, or equivalently \cite[Section~I.3.7]{L},
\begin{equation*}
\int_{|y|>1}\,\frac{d\mu(y)}{|y|^{n-\alpha}}<\infty.
\end{equation*}
This necessarily holds if $\mu$ is {\it bounded} (that is, with $\mu(\mathbb R^n)<\infty$), or of finite energy (see \cite[Corollary to Lemma~3.2.3]{F1}). Actually, then (and only then) the potential $U^\mu$ of any {\it signed} measure $\mu\in\mathfrak M$ is well defined and finite qua\-si-ev\-ery\-whe\-re (q.e.)\ on $\mathbb R^n$, cf.\
\cite[Section~III.1.1]{L}, and hence also nearly everywhere (n.e.)\ on $\mathbb R^n$.\footnote{An assertion $\mathcal A(x)$ involving a variable point $x\in\mathbb R^n$ is said to hold {\it qua\-si-ev\-ery\-whe\-re} (resp.\ {\it nearly everywhere}) on $A\subset\mathbb R^n$ if the set $E$ of all $x\in A$ where $\mathcal A(x)$ fails, is of outer (resp.\ inner) capacity zero. Regarding the {\it outer} and {\it inner} capacities, denoted by $c^*(\cdot)$ and $c_*(\cdot)$, respectively, see \cite[Section~II.2.6]{L}. We write $c(E):=c_*(E)$ if $c_*(E)=c^*(E)$ (e.g.\ if $E$ is Borel \cite[Theorem~2.8]{L}).}

A crucial fact discovered by Riesz \cite[Chapter~I, Eq.~(13)]{R} (cf.\ \cite[Theorem~1.15]{L}) is that the kernel $\kappa_\alpha$ is {\it strictly positive definite}, that is, $I(\mu)\geqslant0$ for every (signed) $\mu\in\mathfrak M$, and moreover $I(\mu)=0\iff\mu=0$. This implies that all (signed) $\mu\in\mathfrak M$ with $I(\mu)<\infty$ form a pre-Hilbert space $\mathcal E$ with the inner product $\langle\mu,\nu\rangle:=I(\mu,\nu)$ and the energy norm $\|\mu\|:=\sqrt{I(\mu)}$, see \cite[Lemma~3.1.2]{F1}. The topology on $\mathcal E$ defined by means of the norm $\|\cdot\|$ is said to be {\it strong}.

Moreover, due to Deny \cite{D1} (for $\alpha=2$, cf.\ also Cartan \cite{Ca1}), the cone $\mathcal E^+:=\mathcal E\cap\mathfrak M^+$ is {\it complete} in the induced strong topology, and the strong topology on $\mathcal E^+$ is {\it finer} than the induced vague topology on $\mathcal E^+$.\footnote{A kernel having these two properties is said to be {\it perfect} (Fuglede \cite[Section~3.3]{F1}).} Thus any strong Cauchy sequence (net) $(\mu_j)\subset\mathcal E^+$ converges both strongly and vaguely to the same (unique) limit $\mu_0\in\mathcal E^+$, the strong topology on $\mathcal E$ as well as the vague topology on $\mathfrak M$ being Hausdorff.\footnote{It is worth noting that the whole pre-Hilbert space $\mathcal E$ is in general strongly incomplete (see a counterexample by Cartan \cite{Ca1}, pertaining to the Newtonian kernel; cf.\ \cite[Theorem~1.19]{L}).}

For any $A\subset\mathbb R^n$, let $\mathfrak M^+(A)$ stand for the class of all $\mu\in\mathfrak M^+$ {\it concentrated on} $A$, which means that $A^c:=\mathbb R^n\setminus A$ is $\mu$-neg\-lig\-ible, or equivalently that the set $A$ is $\mu$-meas\-ur\-able and $\mu=\mu|_A$, $\mu|_A$ being the trace of $\mu$ to $A$, cf.\ \cite[Section~V.5.7]{B2}. If $A$ is closed, then $\mu\in\mathfrak M^+$ is concentrated on $A$ if and only if $S(\mu)\subset A$, $S(\mu)$ being the support of $\mu$.
We also denote
\[\mathcal E^+(A):=\mathfrak M^+(A)\cap\mathcal E,\quad\breve{\mathcal E}^+(A):=\bigl\{\mu\in\mathcal E^+(A):\ \mu(\mathbb R^n)=1\bigr\}.\]

To avoid trivialities, we shall always assume that
\begin{equation}\label{nonzero}
 c_*(A)>0;
\end{equation}
then (and only then) $\mathcal E^+(A)\ne\{0\}$, cf.\ \cite[Lemma~2.3.1]{F1}, and hence $\breve{\mathcal E}^+(A)\ne\varnothing$.

For the Gauss functional $I_{f_{q,z}}(\mu)$, introduced by (\ref{ext2}) or equivalently by
\begin{equation}\label{G}
I_{f_{q,z}}(\mu)=I(\mu)-2q\int U^{\varepsilon_z}\,d\mu=I(\mu)-2qU^\mu(z),
\end{equation}
$\varepsilon_z$ being the unit Dirac measure at $z\in\mathbb R^n$, we define
\begin{equation*}
w_{f_{q,z}}(A):=\inf_{\mu\in\breve{\mathcal E}^+(A)}\,I_{f_{q,z}}(\mu).
\end{equation*}
Clearly,
\[-\infty\leqslant w_{f_{q,z}}(A)<\infty.\]

\begin{problem}\label{pr}
If $w_{f_{q,z}}(A)>-\infty$, does there exist $\lambda_{A,f_{q,z}}\in\breve{\mathcal E}^+(A)$ with
\[I_{f_{q,z}}(\lambda_{A,f_{q,z}})=w_{f_{q,z}}(A)?\]
\end{problem}

Problem~\ref{pr}, being initiated by Gauss \cite{Gau} (see also Frostman \cite{Fr} and Ohtsuka \cite{O}), is still of interest due to its important applications in various areas of mathematics (see the monographs \cite{BHS,ST} and references therein; cf.\ \cite{CSW,Dr0} and \cite{Z-Rarx}--\cite{Z-pseudo} for some of the latest researches on this topic).

The solution $\lambda_{A,f_{q,z}}$ to Problem~\ref{pr} is {\it unique}, which follows easily from the convexity of the class $\breve{\mathcal E}^+(A)$ by making use of the parallelogram identity in the
pre-Hil\-bert space $\mathcal E$ and the strict positive definiteness of the Riesz kernel (cf.\ \cite[Lemma~6]{Z5a}).

The question on the existence of the solution $\lambda_{A,f_{q,z}}$ is, however, not trivial~--- not even if the set $A:=K$ is compact (despite the vague compactness of $\breve{\mathcal E}^+(K)$, see \cite[Section~III.1.9, Corollary~3]{B2}). This is caused by the fact that the Gauss functional $I_{f_{q,z}}(\mu)$, $\mu\in\mathcal E^+$, is {\it not} vaguely lower semicontinuous (l.s.c.), being the difference of two vaguely l.s.c.\ functions. (Compare e.g.\ with \cite[Proof of Theorem~2.6]{O}.)

In this work, we obtain necessary and sufficient conditions for the existence of the minimizer $\lambda_{A,f_{q,z}}$, given in terms of the inner $\alpha$-harmonic measure $\varepsilon_z^A$ (Theorem~\ref{th1}).\footnote{Regarding {\it the inner $\alpha$-harmonic measure} $\varepsilon_z^A:=(\varepsilon_z)^A$ of any $A\subset\mathbb R^n$, see \cite[Section~2]{Z-bal2} (cf.\ also Section~\ref{sec-harm} below), $\mu^A$ being the inner ($\alpha$-Riesz) balayage of $\mu\in\mathfrak M^+$ to $A$. If $A$ is  representable as a countable union of compact sets (thus in particular if $A$ is closed or open), then $\varepsilon_z^A$ coincides with the {\it outer} $\alpha$-har\-mon\-ic measure $\overline{\varepsilon}_z^A$ (see \cite[Corollary~6.1]{Z-bal2}), investigated e.g.\ in \cite{BH}--\cite{KB'}, \cite{L}.} In more details, for any $A$ having the property $(\mathcal P)$ (Section~\ref{sec-main}), and any $z\in A^u\cup{\overline A}^c$, we establish a complete description of the set $\mathfrak S(A,z)$ of all $q\in(0,\infty)$ such that $\lambda_{A,f_{q,z}}$ does exist. Here $\overline A:={\rm Cl}_{\mathbb R^n}A$, whereas $A^u$ is the set of all inner $\alpha$-{\it ul\-tra\-ir\-reg\-ular} points for $A$, namely of all those $x\in\partial A$ $\bigl({}:=\partial_{\mathbb R^n}A\bigr)$ for which $\varepsilon_x^A$ is of finite energy.\footnote{The concept of inner $\alpha$-ultrairregular point, deeply related to that of inner $\alpha$-harmonic measure, is newly introduced. See Section~\ref{sec-harm} for more details, relevant facts, and an illustrative example.}

Also, we establish alternative characterizations of this $\lambda_{A,f_{q,z}}$ (Theorem~\ref{th-alt}), and describe its support (Theorem~\ref{th2}). While treating $\lambda_{A,f_{q,z}}$ as a function of $z\in\overline A^c$, we prove its vague continuity,\footnote{For the terminology used here we refer to Bourbaki \cite[Section~V.3.1]{B2}.} and find out accumulations points (Theorem~\ref{th3}).

The results thereby obtained discover new interesting phenomena, some of which seem to contradict our physical intuition; see Corollaries~\ref{cor0}, \ref{cor1}, \ref{cor2} and Remarks~\ref{rem1}, \ref{rem2}, \ref{rem3}. Examples~\ref{ex} and \ref{ex4}--\ref{ex3} are provided for the sake of illustration.

\section{Main results}\label{sec-main}

Henceforth, we maintain the conventions adopted in Section~\ref{sec-alpha}. Furthermore,
unless explicitly stated otherwise, $A\subset\mathbb R^n$, $\overline{A}\ne\mathbb R^n$, is a fixed set having the property\footnote{As shown in \cite[Theorem~3.9]{Z-Rarx}, $(\mathcal P)$ is fulfilled e.g.\ if $A$ is {\it quasiclosed} ({\it quasicompact}), that is, if $A$ can be approximated in outer capacity by closed (compact) sets (Fuglede \cite[Definition~2.1]{F71}).}
\begin{itemize}
  \item[$(\mathcal P)$] The cone $\mathcal E^+(A)$ is closed in the strong topology on $\mathcal E^+$.
  \end{itemize}

Let $H_{A,x}$ be the inverse of the total mass of the inner $\alpha$-harmonic measure $\varepsilon_x^A$:
\begin{equation}\label{H}
 H_x:=H_{A,x}:=1/\varepsilon_x^A(\mathbb R^n),\quad x\in\mathbb R^n.
\end{equation}
Then
\[1\leqslant H_x<\infty,\]
the former inequality being obvious from $\varepsilon_x^A(\mathbb R^n)\leqslant\varepsilon_x(\mathbb R^n)$, and the latter from (\ref{nonzero}) (see Section~\ref{some}, (b) and (d), respectively).

As seen from Theorem~\ref{harm-tot} below, the same $H_x$ can equivalently be defined by
\begin{equation}\label{n4}H_x=\left\{
\begin{array}{cl}1&\text{if \ $A$ is not inner $\alpha$-thin at infinity},\\
1/U^{\gamma_A}(x)&\text{otherwise},\\ \end{array} \right.
\end{equation}
where $\gamma_A$ is the inner equilibrium measure for $A$, treated in an extended sense where both $\gamma_A(\mathbb R^n)$ and $I(\gamma_A)$ may be $+\infty$. For the concept of inner $\alpha$-thinness of a set at infinity and that of inner equilibrium measure, see \cite{Z-bal2} (cf.\ also Section~\ref{sec-eq} below).

\subsection{On the solvability of Problem~\ref{pr}} The following theorem provides necessary and sufficient conditions for the existence of the solution $\lambda_{A,f_{q,z}}$ to Problem~\ref{pr}.

\begin{theorem}\label{th1} For any given $z\in A^u\cup{\overline A}^c$,
$\lambda_{A,f_{q,z}}$ exists if and only if either
\begin{equation}\label{capfin}
 c_*(A)<\infty,\end{equation}
 or
\begin{equation}\label{q}
q\geqslant H_z.
\end{equation}
\end{theorem}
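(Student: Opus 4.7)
The strategy is to reformulate Problem~\ref{pr} as a strong projection problem in the pre-Hilbert space $\mathcal E$, then treat sufficiency and necessity separately by analysing whether mass escapes to infinity in the strong-Cauchy limit of a minimizing sequence.

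Since $z\in A^u\cup\overline A^c$, the measure $\varepsilon_z^A$ lies in $\mathcal E^+(A)$. For any $\mu\in\mathcal E^+(A)$, the basic balayage relation $U^{\varepsilon_z^A}=U^{\varepsilon_z}$ n.e.\ on $A$, together with the fact that $\mu$ charges no set of inner capacity zero, gives $U^\mu(z)=I(\mu,\varepsilon_z)=I(\mu,\varepsilon_z^A)=\langle\mu,\varepsilon_z^A\rangle$. Substituting into (\ref{G}) I obtain the key identity
\[I_{f_{q,z}}(\mu)=\|\mu-q\varepsilon_z^A\|^2-q^2\|\varepsilon_z^A\|^2\quad\text{for every }\mu\in\breve{\mathcal E}^+(A),\]
so Problem~\ref{pr} amounts to finding the strong projection of $q\varepsilon_z^A$ onto $\breve{\mathcal E}^+(A)$, and $w_{f_{q,z}}(A)\geqslant-q^2\|\varepsilon_z^A\|^2>-\infty$ is automatic. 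Convexity of $\breve{\mathcal E}^+(A)$, strict positive definiteness of $\kappa_\alpha$, and the parallelogram identity then produce the standard bound
\[\|\mu_j-\mu_k\|^2\leqslant 2I_{f_{q,z}}(\mu_j)+2I_{f_{q,z}}(\mu_k)-4w_{f_{q,z}}(A),\]
so every minimizing sequence $(\mu_j)\subset\breve{\mathcal E}^+(A)$ is strong Cauchy. By perfectness of $\kappa_\alpha$ and property $(\mathcal P)$, $(\mu_j)$ converges both strongly and vaguely to a unique $\mu_0\in\mathcal E^+(A)$ with $\mu_0(\mathbb R^n)\leqslant1$, and existence of a minimizer reduces to proving $\mu_0(\mathbb R^n)=1$.

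For sufficiency I distinguish cases. If $q=H_z$, then $H_z\varepsilon_z^A\in\breve{\mathcal E}^+(A)$ attains the value $-q^2\|\varepsilon_z^A\|^2$ (using $\|\varepsilon_z^A\|^2=U^{\varepsilon_z^A}(z)$ from balayage duality) and is the minimizer. For $q>H_z$, $q\varepsilon_z^A$ already carries mass $q/H_z>1$, so the projection is produced by \emph{removing} mass via a balayage-type construction yielding a compactly supported candidate; no escape of mass can then occur, whence $\mu_0(\mathbb R^n)=1$. If instead $c_*(A)<\infty$, the elementary inequality $\|\nu\|^2\geqslant\nu(\mathbb R^n)^2/c_*(A)$ for $\nu\in\mathcal E^+(A)$, combined with the Choquet-type continuity of inner capacity along $A\cap\{|y|>R\}\downarrow\varnothing$, forces any strong Cauchy sequence of probability measures on $A$ to be vaguely tight, so again $\mu_0(\mathbb R^n)=1$.

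For necessity, assume both $c_*(A)=\infty$ and $q<H_z$. Then $m:=1-q/H_z>0$, and the infiniteness of $c_*(A)$ allows me to construct $\nu_k\in\mathcal E^+(A)$ with $\nu_k(\mathbb R^n)=m$, $\|\nu_k\|\to0$, and $\langle\nu_k,\varepsilon_z^A\rangle\to0$, by placing the mass on subsets of $A$ of arbitrarily large capacity located arbitrarily far from $z$. The measures $\mu_k:=q\varepsilon_z^A+\nu_k\in\breve{\mathcal E}^+(A)$ satisfy $\|\mu_k-q\varepsilon_z^A\|=\|\nu_k\|\to0$, hence form a minimizing sequence whose strong (and vague) limit is $q\varepsilon_z^A$, of mass $q/H_z<1$. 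By uniqueness of the strong Cauchy limit, no minimizer exists. The main technical obstacle is precisely this capacity-theoretic construction of $\nu_k$, dually mirrored by the tightness estimate in the finite-capacity sufficiency case; both require fine information about the behaviour of $A$ and of $U^{\varepsilon_z^A}$ at infinity, which is exactly what the hypothesis $z\in A^u\cup\overline A^c$ and the extended equilibrium-measure characterization (\ref{n4}) of $H_z$ are designed to encode.
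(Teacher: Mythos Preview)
Your reformulation as a projection problem, the treatment of $q=H_z$, and the necessity argument all match the paper's approach. The genuine gap is in the sufficiency case $q>H_z$.

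You write that ``the projection is produced by removing mass via a balayage-type construction yielding a compactly supported candidate; no escape of mass can then occur.'' This is not an argument. There is no direct construction of a compactly supported minimizer available here (the compactness of $S(\lambda_{A,f_{q,z}})$ in Theorem~\ref{th2}(i$_1$) is proved \emph{after} existence), and even if you exhibited a good candidate, that would not by itself preclude loss of mass along other minimizing sequences. The paper's route is substantially different and rests on a non-trivial preparatory step (Lemma~\ref{l-ext3}): one first shows that the extremal measure $\xi$~--- the common strong/vague limit of all minimizing nets~--- inherits the variational inequality $U^\xi_{f_{q,z}}\geqslant C_\xi$ n.e.\ on $A$, where $C_\xi:=\int U^\xi_{f_{q,z}}\,d\xi$. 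This is obtained by passing to the limit in the corresponding inequalities (\ref{ch1}) for the compact-set minimizers $\lambda_{K,f_{q,z}}$, $K\uparrow A$, whose existence is easy. With this in hand, the paper argues by contradiction: if $C_\xi\geqslant0$, then $U^\xi\geqslant qU^{\varepsilon_z^A}$ n.e.\ on $A$, and the principle of positivity of mass (both $\xi$ and $q\varepsilon_z^A$ lying in $\mathcal E^+(A)$) forces $\xi(\mathbb R^n)\geqslant q/H_z>1$, contradicting $\xi(\mathbb R^n)\leqslant1$. Hence $C_\xi<0$, and integrating the variational inequality against $\xi$ gives $C_\xi\geqslant C_\xi\,\xi(\mathbb R^n)$, i.e.\ $\xi(\mathbb R^n)\geqslant1$, so $\xi(\mathbb R^n)=1$. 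Your outline contains none of this mechanism.

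A secondary point: in the case $c_*(A)<\infty$, your tightness argument relies on $c_*(A\cap\{|y|>R\})\to0$, which is not guaranteed for arbitrary $A$ satisfying only $(\mathcal P)$ (inner capacity is not continuous from above in general). The paper instead invokes the strong completeness of $\breve{\mathcal E}^+(A)$ under $(\mathcal P)$ and $c_*(A)<\infty$; a direct alternative is to note that $\mu(\mathbb R^n)=\int U^{\gamma_A}\,d\mu=\langle\gamma_A,\mu\rangle$ for $\mu\in\mathcal E^+(A)$, which is strongly continuous, so mass is preserved under strong limits.
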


$\P$ Given $z\in A^u\cup{\overline A}^c$, the set $\mathfrak S(A,z)$ of all $q\in(0,\infty)$ such that $\lambda_{A,f_{q,z}}$ does exist is thus shown to be equal to $(0,\infty)$ if $c_*(A)<\infty$, and to $[H_z,\infty)$ otherwise.

\begin{remark}\label{rem1} Assume for a moment that $A$ is closed, $z\in A^u$, and (\ref{capfin}) or (\ref{q}) is met. Then, according to Theorem~\ref{th1}, $w_{f_{q,z}}(A)$ is {\it an actual minimum}, and hence no compensation effect occurs between the two oppositely signed charges, $-q\varepsilon_z$ and $\lambda_{A,f_{q,z}}$, carried by the same conductor $A$. (Observe that this still holds even when $q$, the attractive mass placed at $z$, becomes arbitrarily large.) The phenomenon thereby discovered seems to be quite surprising and even to contradict our physical intuition. Therefore, it would be interesting to illustrate this phenomenon by means of numerical experiments, which must of course depend on the geometry of the conductor $A$ in a neighborhood of the $\alpha$-ul\-t\-ra\-irregular point $z$, cf.\ Examples~\ref{ex}, \ref{ex1}--\ref{ex3}.
\end{remark}

\begin{corollary}\label{cor0}Assume $A$ is quasicompact. Then for any $z\in A^u\cup{\overline A}^c$ and any $q\in(0,\infty)$,
$\lambda_{A,f_{q,z}}$ does exist.\end{corollary}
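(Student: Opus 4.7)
The plan is to reduce the corollary directly to Theorem~\ref{th1} by verifying that condition (\ref{capfin}), namely $c_*(A)<\infty$, is automatic whenever $A$ is quasicompact. Once this reduction is in place, Theorem~\ref{th1} immediately yields the existence of $\lambda_{A,f_{q,z}}$ for every $q\in(0,\infty)$ and every $z\in A^u\cup\overline A^c$, with no need to invoke the lower threshold $H_z$ or to split into cases on the size of $q$.

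First I would recall Fuglede's definition of quasicompactness \cite[Definition~2.1]{F71}: $A$ can be approximated in outer capacity by compact subsets, so that for every $\varepsilon>0$ there exists a compact $K\subset\mathbb R^n$ with $c^*(A\triangle K)<\varepsilon$. Since the standing assumption $\alpha<n$ forces every compact set to have finite $\alpha$-Riesz capacity (each such $K$ lies in some ball $B(0,R)$, whose capacity is finite), and since the outer capacity is countably subadditive on the relevant class of sets, one gets
\[c^*(A)\leqslant c^*(K)+c^*(A\setminus K)<c(K)+\varepsilon<\infty.\]
In particular $c_*(A)\leqslant c^*(A)<\infty$, which is precisely condition (\ref{capfin}).

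Second, every quasicompact set is in particular quasiclosed, so the footnote attached to property $(\mathcal P)$ in Section~\ref{sec-main} guarantees that $\mathcal E^+(A)$ is strongly closed; the standing hypotheses of Theorem~\ref{th1} are therefore in force. A direct application of Theorem~\ref{th1}, with (\ref{capfin}) already established, concludes the proof, since under (\ref{capfin}) the existence of $\lambda_{A,f_{q,z}}$ is asserted for \emph{all} $q\in(0,\infty)$.

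The only nonroutine step is the passage from quasicompactness to finite outer capacity; as this is a straightforward application of subadditivity combined with the finiteness of $c(K)$ for compact $K$ in the regime $\alpha<n$, I do not expect any serious obstacle.
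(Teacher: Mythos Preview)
Your proposal is correct and follows essentially the same approach as the paper: the paper's proof consists of the single observation that a quasicompact set, being approximated in outer capacity by compact sets, has finite outer (hence inner) capacity, and then invokes Theorem~\ref{th1}. You have simply spelled out the subadditivity step and the verification of property~$(\mathcal P)$ in more detail, but the logical route is identical.
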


\begin{proof}
  This follows directly from Theorem~\ref{th1}, for, being approximated in outer capacity by compact sets, $A$ has finite outer (hence, inner) capacity.
\end{proof}

In the following Corollaries~\ref{cor1} and \ref{cor2}, we explore the solvability of Problem~\ref{pr} for $z$ lying in an arbitrarily small neighborhood of $\overline{A}$ in the topology of $\mathbb R^n\cup\{\infty_{\mathbb R^n}\}$, the one-point compactification of $\mathbb R^n$.

\begin{corollary}\label{cor1} Assume $A$ is closed, $\alpha$-thin at infinity, and of infinite capacity.\footnote{Such $A$ does exist, see \cite{Z-bal2} (Remark~2.2 and Example~2.1), cf.\ also Example~\ref{ex2} below.} Then there exist two sequences, $(z_j)\subset A^c$ and $(q_j)\in[1,\infty)$, having the properties
\[\lim_{j\to\infty}\,|z_j|=\lim_{j\to\infty}\,q_j=\infty\]
and such that
\begin{equation*}
 \lambda_{A,f_{q,z_j}}\text{\ exists}\iff q\geqslant q_j.
\end{equation*}
\end{corollary}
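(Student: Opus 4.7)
My plan is to derive the corollary directly from Theorem~\ref{th1} by producing a sequence $(z_j)\subset A^c$ with $|z_j|\to\infty$ along which the thresholds $H_{z_j}$ diverge to $+\infty$, and then setting $q_j:=H_{z_j}$.

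First I would invoke Theorem~\ref{th1}. Since $c_*(A)=\infty$, condition~(\ref{capfin}) fails, so for every $z\in A^u\cup\overline{A}^c$ the minimizer $\lambda_{A,f_{q,z}}$ exists if and only if $q\geqslant H_z$. Because $A$ is closed, $\overline{A}^c=A^c$; because $A$ is inner $\alpha$-thin at infinity, the second alternative of formula~(\ref{n4}) applies, yielding $H_z=1/U^{\gamma_A}(z)$ for $z\in A^c$. The inequality $U^{\gamma_A}\leqslant 1$ (valid on the open set $A^c$, where $U^{\gamma_A}$ is real-analytic) gives $H_z\geqslant 1$. Hence, once a sequence $(z_j)\subset A^c$ with $|z_j|\to\infty$ and $U^{\gamma_A}(z_j)\to 0$ is available, setting $q_j:=H_{z_j}$ produces $q_j\in[1,\infty)$ with $q_j\to\infty$, and the biconditional ``$\lambda_{A,f_{q,z_j}}$ exists $\iff q\geqslant q_j$'' is immediate from Theorem~\ref{th1}.

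The remaining and principal step is the construction of $(z_j)$. I would first note that $A^c$ must be unbounded, since a closed set containing the exterior of some ball cannot be inner $\alpha$-thin at infinity. To arrange the decay $U^{\gamma_A}(z_j)\to 0$, I would invoke the Kelvin inversion $x\mapsto x^*:=x/|x|^2$: under this map, $A$ being inner $\alpha$-thin at infinity corresponds to $A^*\cup\{0\}$ being inner $\alpha$-thin at the origin, and the standard transformation formula links $U^{\gamma_A}(x)$ with the Kelvin-transformed potential at $x^*$. The thinness of $A^*\cup\{0\}$ at $0$ supplies points $x_j^*\notin A^*$ with $x_j^*\to 0$ along which the transformed potential remains bounded, yielding $U^{\gamma_A}(z_j)\to 0$ for $z_j:=(x_j^*)^*$. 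An alternative, more hands-on argument would split $\gamma_A=\gamma_A|_{B_R}+\gamma_A|_{B_R^c}$ for large $R$, use that the first summand has compact support (hence its potential vanishes at infinity), and exploit the Wiener-type characterization of $\alpha$-thinness to find large $z\in A^c$ where the second summand contributes arbitrarily little. The main obstacle is precisely extracting the potential decay along a sequence in $A^c$; the rest is direct bookkeeping with Theorem~\ref{th1}.
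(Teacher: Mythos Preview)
Your reduction to Theorem~\ref{th1} is exactly right, and you correctly isolate the one genuine task: produce $(z_j)\subset A^c$ with $|z_j|\to\infty$ and $U^{\gamma_A}(z_j)\to 0$. The difficulty is that neither of the two routes you sketch for this step is carried through. In particular, the Kelvin argument as written is circular: under the inversion at the origin, $(\gamma_A)^*=\varepsilon_0^{A_0^*}$ (cf.\ (\ref{har-eq})) and the potentials transform as $U^{\gamma_A}(z)=|z^*|^{\,n-\alpha}\,U^{\varepsilon_0^{A_0^*}}(z^*)$, so asking that $U^{\varepsilon_0^{A_0^*}}$ stay bounded along $z_j^*\to 0$ in $(A_0^*)^c$ is a statement about $U^{\gamma_A}(z_j)$ decaying like $|z_j|^{\alpha-n}$~--- stronger than what you need, and certainly not ``supplied by thinness'' without further argument. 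Thinness of $A_0^*$ at $0$ tells you $0\notin (A_0^*)^r$, but it does not by itself bound the potential of $\varepsilon_0^{A_0^*}$ near $0$; that potential can blow up if $\varepsilon_0^{A_0^*}$ concentrates mass near the tip. Your second route (splitting $\gamma_A$) is likewise only a hint.

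The paper closes this gap in one line using a tool already on the table: Theorem~\ref{th-eq}(ii$_5$). For every $r>0$, the set $A^c\cap\{|x|>r\}$ is \emph{not} inner $\alpha$-thin at infinity (otherwise its union with the $\alpha$-thin set $A$ would make $\{|x|>r\}$ $\alpha$-thin at infinity, which it is not, by the Wiener criterion~(iii$_5$)). Hence, by (ii$_5$), no $\nu\in\mathfrak M^+$ can have $\essinf U^\nu>0$ over $A^c\cap\{|x|>r\}$; applying this with $\nu:=\gamma_A$ yields $\inf_{A^c\cap\{|x|>r\}}U^{\gamma_A}=0$ for each $r$, and the desired sequence drops out immediately. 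This replaces both of your proposed constructions with a direct appeal to the equivalence already proved in the paper.
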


\begin{remark}\label{rem0}Corollary~\ref{cor1} is sharp in the sense that it would no longer be valid if the assumption $c(A)=\infty$ or that of $\alpha$-thinness of $A$ at infinity were dropped from its hypotheses. Indeed, this is obvious from Theorem~\ref{th1} on account of (\ref{n4}).\end{remark}

\begin{remark}\label{rem0'}The sequence $(q_j)$, appearing in Corollary~\ref{cor1}, is uniquely determined by the sequence $(z_j)$; we actually have $q_j:=H_{z_j}$ (see Section~\ref{pr-cor1} for a proof). A similar remark is also applicable to Corollary~\ref{cor2}, cf.\ Section~\ref{pr-cor2}.\end{remark}

Let $A^r$ stand for the set of all {\it inner $\alpha$-reg\-ul\-ar points} of $A$ (see \cite[Section~6]{Z-bal} or Section~\ref{sec-reg} below; compare with \cite[Section~V.1.2]{L}, where $A$ is Borel).

\begin{corollary}\label{cor2}Assume that $c_*(A)=\infty$. Then for each $y\in A^r\cap\partial A$, there exist a sequence $(z_j)\subset\overline{A}^c$ approaching $y$, and a sequence $(q_j)\in[1,\infty)$ approaching~$1$, such that
\begin{equation*}
 \lambda_{A,f_{q,z_j}}\text{\ exists}\iff q\geqslant q_j.
\end{equation*}
\end{corollary}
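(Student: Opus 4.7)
The plan is to reduce the statement to Theorem~\ref{th1} and then exploit the alternative formula (\ref{n4}) for $H_z$ together with lower semicontinuity of the inner equilibrium potential $U^{\gamma_A}$ at the regular point $y$.

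First, since $c_*(A)=\infty$, Theorem~\ref{th1} guarantees that for any $z\in\overline{A}^c$ the minimizer $\lambda_{A,f_{q,z}}$ exists if and only if $q\geqslant H_z$. This forces the identification $q_j:=H_{z_j}$ anticipated in Remark~\ref{rem0'}, and it already delivers the ``iff'' clause of the corollary, provided a sequence $(z_j)\subset\overline{A}^c$ with $z_j\to y$ has been produced. By the bounds $1\leqslant H_x<\infty$ recorded just after (\ref{H}), the resulting $q_j$ automatically lie in $[1,\infty)$, so the only real task is to arrange $q_j\to 1$.

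Next I would split according to whether $A$ is inner $\alpha$-thin at infinity. If it is not, then by (\ref{n4}) we have $H_z\equiv 1$, so any sequence $(z_j)\subset\overline{A}^c$ with $z_j\to y$ works, with $q_j\equiv 1$. In the genuine case where $A$ is inner $\alpha$-thin at infinity, $H_{z_j}=1/U^{\gamma_A}(z_j)$, and the target $q_j\to 1$ becomes $U^{\gamma_A}(z_j)\to 1$. The two crucial inputs will be: (i) the global bound $U^{\gamma_A}\leqslant 1$ on $\mathbb R^n$, a general property of the extended inner equilibrium potential; and (ii) the identity $U^{\gamma_A}(y)=1$, which is built into the very definition of the set $A^r$ of inner $\alpha$-regular points. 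Granting these, I would fix an arbitrary sequence $(z_j)\subset\overline{A}^c$ tending to $y$ (which exists because $y\in\partial A$ together with $\overline{A}\ne\mathbb R^n$ yields boundary points of $\overline{A}$ in any neighborhood of $y$, modulo a harmless refinement of the sequence), invoke lower semicontinuity of $U^{\gamma_A}$, and sandwich $U^{\gamma_A}(z_j)$ between the upper bound $1$ and $\liminf_j U^{\gamma_A}(z_j)\geqslant U^{\gamma_A}(y)=1$, concluding $U^{\gamma_A}(z_j)\to 1$ and hence $q_j\to 1$.

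The main obstacle is the careful handling of ingredient (ii) in the extended setting, where $\gamma_A$ may have infinite total mass and infinite energy. In the classical finite-energy setting the equality $U^{\gamma_A}(y)=1$ for $y\in A^r$ is essentially the Wiener-type criterion for regularity, but to apply it here I must be sure that the extended equilibrium measure used in (\ref{n4}) genuinely satisfies $U^{\gamma_A}=1$ n.e.\ on $A$ and, sharpened at regular boundary points, $U^{\gamma_A}(y)=1$ pointwise. A secondary and smaller concern is to ensure that the chosen sequence $(z_j)$ really lies in $\overline{A}^c$ rather than merely in $A^c$; this is automatic for closed $A$, and for the quasiclosed sets covered by $(\mathcal P)$ should be arranged by a small topological adjustment of the approach direction toward $y$.
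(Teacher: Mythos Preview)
Your approach is essentially identical to the paper's: the paper packages your sandwich argument into a separate lemma (Lemma~\ref{L-Hcon}, giving $\lim_{x\to y}H_x=1$ for $y\in A^r\cap\partial A$), proved exactly via the case split on inner $\alpha$-thinness, the identity $U^{\gamma_A}=1$ on $A^r$ (see (\ref{eqne})), the global bound $U^{\gamma_A}\leqslant 1$ (see (\ref{ineqne})), and lower semicontinuity of $U^{\gamma_A}$. Your concern (ii) is thus resolved by (\ref{eqne}), which is established for the extended inner equilibrium measure; the paper does not address your secondary topological concern about the existence of $(z_j)\subset\overline{A}^c$ any more explicitly than you do.
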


\begin{remark}\label{rem4} Corollary~\ref{cor2} holds trivially if $A$ is not inner $\alpha$-thin at infinity. Actually, then we have even more~--- namely, for any given $z\in\overline{A}^c$,
\begin{equation*}
 \lambda_{A,f_{q,z}}\text{\ exists}\iff q\geqslant1
 \end{equation*}
(see the former relation in (\ref{n4}) and Theorem~\ref{th1}).\end{remark}

\begin{remark}\label{rem5} Corollary~\ref{cor2} would fail to hold if $c_*(A)$ were assumed to be finite, see Theorem~\ref{th1}.\end{remark}

\subsection{Alternative characterizations of $\lambda_{A,f_{q,z}}$}\label{sec-alt}  Fix $z\in A^u\cup{\overline A}^c$. Throughout Sections~\ref{sec-alt}--\ref{sec-conv}, we assume that either of (\ref{capfin}) or (\ref{q}) is fulfilled; then (and only then) the solution $\lambda_{A,f_{q,z}}$ to Problem~\ref{pr} does exist (Theorem~\ref{th1}).

By Lemma~\ref{l-char}, $\lambda_{A,f_{q,z}}$ is uniquely characterized within $\breve{\mathcal E}^+(A)$ by the inequality\footnote{This implies that $c_{A,f_{q,z}}$ is finite, $U^{\lambda_{A,f_{q,z}}}_{f_{q,z}}$ being finite q.e.\ (hence n.e.)\ on $\mathbb R^n$, see Section~\ref{sec-alpha}. Alternatively, $c_{A,f_{q,z}}$ is finite, for so are both $I_{f_{q,z}}(\lambda_{A,f_{q,z}})$ and $I(\lambda_{A,f_{q,z}})$.}
\begin{equation}\label{e-char}
U^{\lambda_{A,f_{q,z}}}_{f_{q,z}}\geqslant c_{A,f_{q,z}}\quad\text{\ n.e. on $A$},
\end{equation}
where
\begin{equation*}U^{\lambda_{A,f_{q,z}}}_{f_{q,z}}:=U^{\lambda_{A,f_{q,z}}}+f_{q,z}
\end{equation*}
is said to {\it the $f_{q,z}$-weighted potential} of the measure $\lambda_{A,f_{q,z}}$, while
\begin{equation}\label{iec}
 c_{A,f_{q,z}}:=\int U^{\lambda_{A,f_{q,z}}}_{f_{q,z}}\,d\lambda_{A,f_{q,z}}
 \quad\Bigl({}=w_{f_{q,z}}(A)+qU^{\lambda_{A,f_{q,z}}}(z)\Bigr)
\end{equation}
is termed {\it the inner $f_{q,z}$-weighted equilibrium constant} for the set $A$.

Denote
\begin{equation}\label{L}
 \Lambda_{A,f_{q,z}}:=\bigl\{\mu\in\mathfrak M^+:\ U^\mu_{f_{q,z}}\geqslant c_{A,f_{q,z}}\quad\text{n.e. on $A$}\bigr\}.
\end{equation}
As seen from (\ref{e-char}),
\[\lambda_{A,f_{q,z}}\in\Lambda_{A,f_{q,z}}.\]

\begin{theorem}\label{th-alt}In addition to the hypotheses indicated at the beginning of this subsection, assume that
\begin{equation*}
q\leqslant H_z.
\end{equation*}
Then $\lambda_{A,f_{q,z}}$ is representable in the form
\begin{equation}\label{sol}\lambda_{A,f_{q,z}}=\left\{
\begin{array}{cl}q\varepsilon_z^A+c_{A,f_{q,z}}\gamma_A&\text{if \ $q<H_z$},\\
H_z\varepsilon_z^A&\text{otherwise},\\ \end{array} \right.
\end{equation}
and it can alternatively be characterized by any one of the following {\rm(i)--(iii)}.
\begin{itemize}
  \item[{\rm(i)}] $\lambda_{A,f_{q,z}}$ is the unique solution to the problem of minimizing the potential $U^\mu$ over the class $\Lambda_{A,f_{q,z}}$. That is,\footnote{Relations (\ref{alt2}) and (\ref{alt3}) would certainly be the same if $\mu$ in (\ref{L}) were of finite energy.}
      \begin{equation}\label{alt2}
       U^{\lambda_{A,f_{q,z}}}=\min_{\mu\in\Lambda_{A,f_{q,z}}}\,U^\mu\quad\text{on $\mathbb R^n$}.
      \end{equation}
  \item[{\rm(ii)}] $\lambda_{A,f_{q,z}}$ is the unique solution to the problem of minimizing the energy $I(\mu)$ over the class $\Lambda_{A,f_{q,z}}$. That is,
      \begin{equation}\label{alt3}
       I(\lambda_{A,f_{q,z}})=\min_{\mu\in\Lambda_{A,f_{q,z}}}\,I(\mu).
      \end{equation}
  \item[{\rm(iii)}] $\lambda_{A,f_{q,z}}$ is the only measure in the class $\mathcal E^+(A)$ having the property
  \begin{equation*}
  U^{\lambda_{A,f_{q,z}}}_{f_{q,z}}=c_{A,f_{q,z}}\quad\text{n.e.\ on $A$}.
  \end{equation*}
Furthermore,\footnote{Relation (\ref{R'}) can actually be specified as follows (see (\ref{elat7})):
\[c_{A,f_{q,z}}=\frac{H_z-q}{H_zc_*(A)}\quad\text{if $q<H_z$}.\]
Note that $q<H_z$ necessarily yields $c_*(A)<\infty$, for if not, $\lambda_{A,f_{q,z}}$ fails to exist (Theorem~\ref{th1}).}
\begin{align}\label{R'}
 c_{A,f_{q,z}}&>0\quad\text{if $q<H_z$},\\
 c_{A,f_{q,z}}&=0\quad\text{otherwise}.\label{RR'}
\end{align}
\end{itemize}
\end{theorem}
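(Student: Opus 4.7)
The plan is to exhibit the explicit candidate on the right-hand side of (\ref{sol}), verify by a direct potential computation that it lies in $\breve{\mathcal E}^+(A)$ and satisfies the characterizing inequality (\ref{e-char}) with equality n.e.\ on $A$, and then read off the characterizations (i)--(iii) together with the formulas (\ref{R'}), (\ref{RR'}) from this closed-form expression; the uniqueness half of Lemma~\ref{l-char} will then identify the candidate as $\lambda_{A,f_{q,z}}$.

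Concretely, I would define
\[\mu:=\begin{cases}q\varepsilon_z^A+c\,\gamma_A,&q<H_z,\\ H_z\varepsilon_z^A,&q=H_z,\end{cases}\]
with $c$ momentarily undetermined. The hypothesis $q<H_z$ forces $c_*(A)<\infty$ by Theorem~\ref{th1}, so in that sub-case $\gamma_A$ is a genuine finite-energy measure with $\gamma_A(\mathbb R^n)=c_*(A)$ and $U^{\gamma_A}=1$ n.e.\ on $A$; in the sub-case $q=H_z$ the $\gamma_A$-term does not appear at all, so possibly infinite $c_*(A)$ causes no trouble. Using $\varepsilon_z^A(\mathbb R^n)=1/H_z$, the normalisation $\mu(\mathbb R^n)=1$ forces $c=(H_z-q)/(H_z c_*(A))$. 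Combining the balayage identity $U^{\varepsilon_z^A}=U^{\varepsilon_z}$ q.e.\ on $A$ (hence n.e.) with the equilibrium identity $U^{\gamma_A}=1$ n.e.\ on $A$, a direct computation then gives $U^\mu_{f_{q,z}}=c$ n.e.\ on $A$ in the first sub-case and $U^\mu_{f_{q,z}}=0$ n.e.\ on $A$ in the second. Since $\mu\in\breve{\mathcal E}^+(A)$ (finite energy of each summand), Lemma~\ref{l-char} delivers $\mu=\lambda_{A,f_{q,z}}$ and pins down the equilibrium constant, establishing (\ref{sol}), (\ref{R'}), and (\ref{RR'}) simultaneously.

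The three alternative characterizations then follow quickly. Part~(iii) is immediate modulo uniqueness: for any second $\nu\in\mathcal E^+(A)$ with $U^\nu_{f_{q,z}}=c_{A,f_{q,z}}$ n.e.\ on $A$, one has $U^\nu=U^{\lambda_{A,f_{q,z}}}$ n.e.\ on $A$, so the $\alpha$-Riesz domination principle (valid for $\alpha\leqslant 2$) applied in both directions yields pointwise equality of potentials, whence strict positive definiteness forces $\nu=\lambda_{A,f_{q,z}}$. For (i), any $\mu\in\Lambda_{A,f_{q,z}}$ satisfies $U^\mu\geqslant c_{A,f_{q,z}}+qU^{\varepsilon_z}=U^{\lambda_{A,f_{q,z}}}$ n.e.\ on $A$, and since $\lambda_{A,f_{q,z}}$ has finite energy and is concentrated on $A$, the same domination principle propagates this inequality to all of $\mathbb R^n$, giving (\ref{alt2}). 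Integrating the resulting pointwise inequality against $\lambda_{A,f_{q,z}}$ and applying Cauchy--Schwarz in $\mathcal E$ yields $I(\lambda_{A,f_{q,z}})\leqslant I(\lambda_{A,f_{q,z}},\mu)\leqslant\|\lambda_{A,f_{q,z}}\|\,\|\mu\|$, whence (\ref{alt3}); uniqueness of the energy minimiser is standard by strict convexity of $I$ on the convex set $\Lambda_{A,f_{q,z}}$.

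The main obstacle is managing the two sub-cases $q<H_z$ and $q=H_z$ in parallel, since $\gamma_A$ is treated in an extended sense and may carry infinite total mass and infinite energy when $A$ is not $\alpha$-thin at infinity: the computation must be organised so that $\gamma_A$ only appears in the formula when it is a bona fide finite-energy measure. A smaller technical point is the precise form of the domination principle used to upgrade the potential inequality from n.e.\ on $A$ to everywhere on $\mathbb R^n$, which relies essentially on $\alpha\leqslant 2$ and on the finite energy and $A$-concentration of $\lambda_{A,f_{q,z}}$.
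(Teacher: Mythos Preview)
Your construction of the explicit candidate $\mu$ and its identification via Lemma~\ref{l-char} is essentially identical to the paper's: you solve for the normalizing constant $c=(H_z-q)/(H_zc_*(A))$, compute $U^\mu_{f_{q,z}}=c$ n.e.\ on $A$ from the balayage identity and the equilibrium identity, and invoke Lemma~\ref{l-char}. (One small slip: the balayage identity $U^{\varepsilon_z^A}=U^{\varepsilon_z}$ is only asserted n.e.\ on $A$ in this framework, not q.e.; this is harmless for your argument.)

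Where you diverge from the paper is in the derivation of (i)--(iii). The paper proceeds by recognising that $\lambda_{A,f_{q,z}}$ is the inner balayage $\vartheta^A$ of an explicit measure $\vartheta$ (namely $\vartheta=q\varepsilon_z+c_{A,f_{q,z}}\gamma_A$ when $q<H_z$, and $\vartheta=H_z\varepsilon_z$ when $q=H_z$), and that $\Lambda_{A,f_{q,z}}=\Gamma_{A,\vartheta}=\Gamma_{A,\vartheta^A}$; characterizations (i), (ii), (iii) then drop out of the corresponding characterizations of balayage (Theorems~\ref{th-bal2}(i$_4$), \ref{th-bal1}(iii$_3$), and~\ref{l-oo'} respectively). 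Your route is more elementary: you use the domination principle directly to propagate $U^\lambda\leqslant U^\mu$ from n.e.\ on $A$ to everywhere, then squeeze (ii) out of (i) via Cauchy--Schwarz (handling the trivial case $I(\mu)=\infty$ separately), and get uniqueness in (iii) by applying domination in both directions. Both arguments are correct; the paper's has the virtue of exhibiting the minimizer as a balayage and thus unifying the three characterizations with the balayage theory already built in Section~\ref{sec-baleq}, while yours avoids that machinery at the cost of invoking the domination principle (equivalently, the first maximum principle) as a separate ingredient. Your remark that $\alpha\leqslant2$ is essential for the domination step is correct, and in fact this restriction is already baked into the paper's balayage theory for the same reason.
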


\begin{remark}As shown below (see (\ref{i0})), for any $A$ and any $z\in A^u\cup{\overline A}^c$, we have, along with (\ref{R'}) and (\ref{RR'}),
\begin{equation}\label{RRR'}
c_{A,f_{q,z}}<0\quad\text{if $q>H_z$}.
\end{equation}
\end{remark}

\subsection{Is $S(\lambda_{A,f_{q,z}})$ compact or noncompact?}\label{sec-comp}
As seen from Theorem~\ref{th2} (cf.\ also Remark~\ref{rem2}), an answer to this question strongly depends on $q\in\mathfrak S(A,z)$.

The {\it reduced kernel} of $A$ is the set of all $x\in A$ such that for every open ball $B_{x,r}$ of radius $r\in(0,\infty)$ centered at $x$, we have $c_*(A\cap B_{x,r})>0$, cf.\ \cite[p.~164]{L}.

\begin{theorem}\label{th2} For any $z\in A^u\cup{\overline A}^c$, the following {\rm(i$_1$)} and {\rm(ii$_1$)} hold true.
\begin{itemize}
\item[{\rm(i$_1$)}] For any $q>H_z$, $\lambda_{A,f_{q,z}}$ is of compact support.
\item[{\rm(ii$_1$)}] Assume that $A$ is closed, coincides with its reduced kernel, and either $\alpha<2$ or $A^c$ is connected. If moreover $q\leqslant H_z$,
    then
    \begin{equation}\label{R}S(\lambda_{A,f_{q,z}})=\left\{
\begin{array}{cl}A&\text{if \ $\alpha<2$},\\
\partial A&\text{otherwise}.\\ \end{array} \right.
\end{equation}
\end{itemize}
\end{theorem}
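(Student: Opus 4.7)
My plan is to handle the two parts separately: (i$_1$) by a minimum principle argument driven by the Frostman-type inequality (\ref{e-char}) together with (\ref{RRR'}), and (ii$_1$) by feeding the explicit formula (\ref{sol}) into the known support descriptions for inner $\alpha$-Riesz balayage and equilibrium measures.

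For (i$_1$), write $\lambda := \lambda_{A,f_{q,z}}$ and $c := c_{A,f_{q,z}}$; by (\ref{RRR'}), $q > H_z$ forces $c < 0$. Since $\lambda$ has finite energy, it ignores sets of inner capacity zero, so (\ref{e-char}) upgrades to $U^\lambda_{f_{q,z}} \geq c$ $\lambda$-a.e. Combining this with (\ref{iec}) and the fact that $\lambda$ is a probability measure, one has $\int(U^\lambda_{f_{q,z}} - c)\,d\lambda = 0$ with nonnegative integrand $\lambda$-a.e., so $U^\lambda_{f_{q,z}} = c$ $\lambda$-a.e. Because $U^\lambda_{f_{q,z}}$ is lower semicontinuous on $\mathbb{R}^n \setminus \{z\}$, the set
\[F := \{y \in \mathbb{R}^n \setminus \{z\} : U^\lambda_{f_{q,z}}(y) \leq c\}\]
is closed there, and $\lambda$ concentrates on $F \cup \{z\}$, whence $S(\lambda) \subset \overline{F} \cup \{z\}$ in $\mathbb{R}^n$. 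For any $y \in F$, nonnegativity of the Riesz potential together with $c < 0$ yields
\[0 \leq U^\lambda(y) \leq q|z-y|^{\alpha-n} + c,\]
hence $|z-y| \leq (q/|c|)^{1/(n-\alpha)}$; thus $\overline{F}$ sits in a closed ball centered at $z$, and $S(\lambda)$ is compact.

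For (ii$_1$), the representation (\ref{sol}) gives $\lambda_{A,f_{q,z}} = q\varepsilon_z^A + c_{A,f_{q,z}}\gamma_A$ when $q < H_z$ (the footnote after (\ref{R'}) additionally forces $c_*(A) < \infty$, so $\gamma_A$ is a genuine finite positive measure on $A$) and $\lambda_{A,f_{H_z,z}} = H_z\varepsilon_z^A$ at the endpoint, both positive linear combinations with strictly positive coefficients. Since $S(a\mu + b\nu) = S(\mu) \cup S(\nu)$ for positive Radon measures with $a, b > 0$, proving (\ref{R}) reduces to the support identities
\[S(\varepsilon_z^A) = S(\gamma_A) = \begin{cases} A & \text{if } \alpha < 2,\\ \partial A & \text{if } \alpha = 2 \text{ and } A^c \text{ is connected,}\end{cases}\]
under the standing assumption that $A$ equals its reduced kernel. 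I would import these from the general theory of inner $\alpha$-Riesz balayage and equilibrium, as developed in Landkof \cite{L} and refined in \cite{Z-bal,Z-bal2}. The reduced-kernel hypothesis prevents loss of mass on capacity-thin pieces of $A$, while the dichotomy $\alpha < 2$ versus $\alpha = 2$ reflects whether a maximum principle is available: for $\alpha = 2$ with $A^c$ connected, harmonicity of $U^{\varepsilon_z^A}$ and $U^{\gamma_A}$ on $A^c$ together with the connectedness of $A^c$ forces the measures onto $\partial A$, while for $\alpha < 2$ no such mechanism is present and the full set $A$ is charged.

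I expect the main obstacle to be part (ii$_1$): part (i$_1$) is a quite clean minimum principle, whereas (ii$_1$) rests on the geometric support identities above, where the delicate point is the $\alpha = 2$, $A^c$-connected case—one must verify that no boundary point of $A$ is missed from $S(\varepsilon_z^A)$. The reduced-kernel hypothesis is precisely what brings every such point into the closure of the charged set, and careful handling of inner $\alpha$-regular versus irregular boundary points is the only genuine work remaining.
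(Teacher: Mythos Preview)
Your approach is essentially the paper's: for (i$_1$) the paper likewise uses $c_{A,f_{q,z}}<0$ together with the equality $U^{\lambda}_{f_{q,z}}=c_{A,f_{q,z}}$ $\lambda$-a.e.\ (which it takes directly from (\ref{ch2}) rather than re-deriving) and the decay of $U^{\varepsilon_z}$ at infinity, while for (ii$_1$) it feeds (\ref{sol}) into the support theorems \cite[Theorem~4.1]{Z-bal2} for $\varepsilon_z^A$ and \cite[Theorem~7.2]{Z-bal} for $\gamma_A$, exactly as you propose.

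The one genuine issue is your appeal to (\ref{RRR'}): in the paper that inequality is not proved independently but is established \emph{inside} the proof of Theorem~\ref{th2} itself (as (\ref{i0})), so citing it is circular. The paper obtains $c_{A,f_{q,z}}<0$ by identifying $\lambda_{A,f_{q,z}}$ with the extremal measure $\xi_{A,f_{q,z}}$ and invoking (\ref{Cxi0}), which in turn was derived in the proof of Theorem~\ref{th1} via the principle of positivity of mass (showing that $C_\xi\geqslant0$ would force $q\leqslant H_z$). You should reproduce that short argument rather than quoting (\ref{RRR'}).
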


\begin{remark}\label{rem2} Under the hypotheses of Theorem~\ref{th2}(ii$_1$), assume that $\partial A$ is unbounded. Then, by virtue of (\ref{R}), the support of $\lambda_{A,f_{H_z,z}}$ is noncompact, whereas that of
$\lambda_{A,f_{H_z+\beta,z}}$ is already compact for any $\beta\in(0,\infty)$, see Theorem~\ref{th2}(i$_1$). Thus, any increase (even arbitrarily small) of the attractive mass $H_z$ placed at $z\in A^u\cup A^c$, changes drastically the solution to the problem in question, making it zero in some neighborhood of the Alexandroff point $\infty_{\mathbb R^n}$.\end{remark}

\begin{remark}
Theorem~\ref{th2} gives an answer to Open question~2.1 raised by Ohtsuka \cite[p.~284]{O} about conditions ensuring the identity $S(\lambda_{A,f_{q,z}})=A$.
\end{remark}

\subsection{On the vague convergence of $\lambda_{A,f_{q,z}}$}\label{sec-conv} The following theorem analyzes the vague convergence of $\lambda_{A,f_{q,z}}$, treated as a function of $z\in\overline{A}^c$.

\begin{theorem}\label{th3} Given $A\subset\mathbb R^n$, the following {\rm(i$_2$)} and {\rm(ii$_2$)} hold true.
\begin{itemize}
\item[{\rm(i$_2$)}] For any $y\in A^r\cap\partial A$,
\begin{equation}\label{vcon}\lim_{\overline{A}^c\ni z\to y}\,\lambda_{A,f_{H_z,z}}(\varphi)=\varepsilon^A_y(\varphi)=\varepsilon_y(\varphi)\quad\text{for all $\varphi\in C_0(\mathbb R^n)$}.
\end{equation}
\item[{\rm(ii$_2$)}] Assume that either $q=H_z$, or $q$ is independent of $z$, and moreover $q\leqslant1$.\footnote{In either of these two cases, Theorem~\ref{th-alt} is applicable, and hence $\lambda_{A,f_{q,z}}$ is given by (\ref{sol}).} Then the mapping $z\mapsto\lambda_{A,f_{q,z}}$, $z\in\overline{A}^c$, is vaguely continuous, that is,
\begin{equation*}
\lim_{\overline{A}^c\ni z\to y\in\overline{A}^c}\,\lambda_{A,f_{q,z}}(\varphi)=\lambda_{A,f_{q,y}}(\varphi)\quad\text{for all $\varphi\in C_0(\mathbb R^n)$.}
\end{equation*}
\end{itemize}
\end{theorem}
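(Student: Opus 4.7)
The plan is to exploit the explicit representation (\ref{sol}) of $\lambda_{A,f_{q,z}}$ provided by Theorem~\ref{th-alt}, thereby reducing the vague convergence of $\lambda_{A,f_{q,z}}$ to three ingredients: vague continuity of the balayage map $z\mapsto\varepsilon_z^A$ at the target point; continuity of the scalar $z\mapsto H_z$, equivalently (via (\ref{n4})) of the inner equilibrium potential $U^{\gamma_A}$; and continuity of $z\mapsto c_{A,f_{q,z}}=(H_z-q)/(H_zc_*(A))$, which becomes automatic once $z\mapsto H_z$ is controlled.

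For part (i$_2$), I would fix $y\in A^r\cap\partial A$. Since $q=H_z$, the second branch of (\ref{sol}) gives $\lambda_{A,f_{H_z,z}}=H_z\varepsilon_z^A$ for every $z\in\overline A^c$. Inner $\alpha$-regularity of $y$ ensures $\varepsilon_y^A=\varepsilon_y$ and hence $H_y=1$, so it suffices to prove $H_z\varepsilon_z^A\to\varepsilon_y$ vaguely. The scalar convergence $H_z\to 1$ follows from (\ref{n4}): either $H_z\equiv 1$ on $\mathbb R^n$ when $A$ is not inner $\alpha$-thin at infinity, or $H_z=1/U^{\gamma_A}(z)$ and the continuity of $U^{\gamma_A}$ at the inner regular point $y$, as $z\to y$ from $A^c$, yields $U^{\gamma_A}(z)\to U^{\gamma_A}(y)=1$. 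For the measure part, I would establish vague convergence $\varepsilon_z^A\to\varepsilon_y$ by vague compactness: the total masses $\varepsilon_z^A(\mathbb R^n)=1/H_z$ stay bounded near $1$, so every subnet has a vague accumulation point $\nu$; I then identify $\nu=\varepsilon_y$ by matching potentials, using the relation $U^{\varepsilon_z^A}\leqslant|z-\cdot|^{\alpha-n}$ everywhere (with equality n.e.\ on $A$) and invoking regularity of $y$ to handle the n.e.\ identification in the limit.

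For part (ii$_2$), the target point $y\in\overline A^c$ sits at positive distance from $\overline A$, which removes the boundary subtleties. In the subcase $q=H_z$, the argument is the one just sketched but now at an interior point of $\overline A^c$, so the continuity of $z\mapsto\varepsilon_z^A$ and $z\mapsto U^{\gamma_A}(z)$ reduces to routine dominated convergence on compact test sets bounded away from $y$. In the subcase where $q$ is constant with $q\leqslant 1\leqslant H_z$, the first branch of (\ref{sol}) yields $\lambda_{A,f_{q,z}}=q\varepsilon_z^A+c_{A,f_{q,z}}\gamma_A$, whose coefficient $c_{A,f_{q,z}}=(H_z-q)/(H_zc_*(A))$ depends continuously on $z$ and vanishes consistently at $q=H_z$ (so it meshes with the second branch); vague continuity of $\lambda_{A,f_{q,z}}$ then follows at once from that of $z\mapsto\varepsilon_z^A$ and $z\mapsto H_z$ on $\overline A^c$.

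The main obstacle will be the rigorous vague continuity of $z\mapsto\varepsilon_z^A$ at the boundary point $y$ in (i$_2$); the hypothesis $y\in A^r$ is exactly what aligns the limiting potential with $U^{\varepsilon_y}$, so regularity must enter essentially, whereas at irregular boundary points the limit would exceed $\varepsilon_y$ in total mass. At interior points of $\overline A^c$ the analogous step is considerably softer because $z$ stays uniformly away from $\overline A$, and this is why part (ii$_2$) is the easier half.
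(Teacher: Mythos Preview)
Your reduction is exactly the paper's: represent $\lambda_{A,f_{q,z}}$ via (\ref{sol}) and (\ref{elat7}), then reduce everything to (a) continuity of $z\mapsto H_z$ at the target point and (b) vague continuity of $z\mapsto\varepsilon_z^A$. For (a) the paper packages your argument as Lemma~\ref{L-Hcon}; for (b) the paper simply cites \cite[Theorem~3.1]{Z-bal2}, which asserts precisely
\[
\lim_{\overline{A}^c\ni z\to y}\,\varepsilon_z^A(\varphi)=\varepsilon_y^A(\varphi)\quad\text{for all }\varphi\in C_0(\mathbb R^n),\ y\in\overline{A}^c\cup\bigl(A^r\cap\partial A\bigr).
\]
So the architecture of your proposal matches the paper's proof essentially line for line.

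The one place your sketch is genuinely incomplete is the ad hoc proof you offer for (b) at a boundary point $y\in A^r\cap\partial A$. Vague compactness plus the principle of descent applied to $U^{\varepsilon_z^A}\leqslant|z-\cdot|^{\alpha-n}$ only yields $U^\nu\leqslant U^{\varepsilon_y}$ for a vague accumulation point~$\nu$; this does not identify $\nu$ (indeed $\nu=0$ satisfies it), and the n.e.\ equality on $A$ does not survive the passage to the limit in the direction you need. A workable route is instead to test against potentials via the symmetry relation (\ref{alt}): for $\sigma\in\mathcal E^+$ with continuous potential one has $\int U^\sigma\,d\varepsilon_z^A=U^{\sigma^A}(z)$, and since $U^{\sigma^A}\leqslant U^\sigma$ everywhere while $U^{\sigma^A}(y)=U^\sigma(y)$ at the inner regular point~$y$ (Section~\ref{sec-reg}), the sandwich gives $\int U^\sigma\,d\varepsilon_z^A\to U^\sigma(y)=\int U^\sigma\,d\varepsilon_y$; one then extends from such $U^\sigma$ to all of $C_0$ by a density argument. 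This is presumably close to what \cite[Theorem~3.1]{Z-bal2} does, and it is the step you should either carry out in full or cite.
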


\begin{remark}\label{rem3}According to (\ref{vcon}), $y\in A^r\cap\partial A$ serve as accumulation points for $\lambda_{A,f_{H_z,z}}$ when $z$ ranges over $\overline{A}^c$, that is,
\begin{equation*}
\lambda_{A,f_{H_z,z}}\to\varepsilon_y\quad\text{vaguely as $\overline{A}^c\ni z\to y\in A^r\cap\partial A$}.\end{equation*}
This phenomenon seems to be unexpected in view of the fact that the support of every $\lambda_{A,f_{H_z,z}}$, $z\in\overline{A}^c$, might even coincide with the whole $A$, see Theorem~\ref{th2}(ii$_1$).\end{remark}

\subsection{On the novelty of the above results} The results of this section are largely new. However, in the particular case where $A$ is closed and not $\alpha$-thin at infinity, while $z\in A^c$, necessary and sufficient conditions for the existence of $\lambda_{A,f_{q,z}}$ as well as sufficient conditions for the compactness of its support (given above by (\ref{q}) and Theorem~\ref{th2}(i$_1$), respectively, with $H_z:=1$) were established by Dragnev et al.\ \cite[Corollary~2.6]{Dr0}. Our approach is substantially different from that in \cite{Dr0}, which enabled us to investigate the problem in question for
{\it any} quasiclosed $A$ (independently of how it is large at infinity) and {\it any} $z\in\overline{A}^c\cup A^u$. Such an improvement is in particular due to the systematic use of the strong topology on $\mathcal E$, which in turn became possible since the inner $\alpha$-harmonic measure $\varepsilon_z^A$, $z\in\overline{A}^c\cup A^u$, is of finite energy.\footnote{This observation suggests that some of the results of the present study can be extended to the external field $f:=-U^\mu$, where $\mu\in\mathfrak M^+$ is an arbitrary measure having the property $\mu^A\in\mathcal E^+$. Such a generalization might be a subject of a further research work.}

Proofs are given in Sections~\ref{sec-prep} and \ref{sec-proofs}.
The (new) concept of inner $\alpha$-ul\-t\-ra\-ir\-reg\-ul\-ar point is introduced in Section~\ref{sec-harm}. A background for this definition is provided in Sections~\ref{some}--\ref{sec-eq}, reviewing some basic facts of the theories of inner balayage and inner equilibrium measures, the summary being organized correspondingly to its usage in the current research. Finally, in Section~\ref{sec-appl} we present some illustrative examples.

\section{Inner $\alpha$-harmonic measure and inner $\alpha$-Riesz equilibrium measure}\label{sec-baleq}

Unless explicitly stated otherwise, in this section we do not require $(\mathcal P)$ to hold.

\subsection{Basic facts about inner balayage}\label{some} The theory of inner ($\alpha$-Riesz) balayage of arbitrary $\zeta\in\mathfrak M^+$ to arbitrary $A\subset\mathbb R^n$, $n\geqslant2$, generalizing Cartan's theory \cite{Ca2} of inner Newtonian balayage ($\alpha=2$, $n\geqslant3$) to any $\alpha\in(0,2]$, $\alpha<n$, was originated in the author's recent papers \cite{Z-bal,Z-bal2}, and it found a further development in \cite{Z-arx1}--\cite{Z-arx-22}.\footnote{For the theory of {\it outer} Riesz balayage, see e.g.\ the monographs \cite{BH,Br,Doob}, the last two dealing with the Newtonian kernel ($\alpha=2$, $n\geqslant3$).}

The concept of inner balayage $\zeta^A$ of any $\zeta\in\mathfrak M^+$ to any $A\subset\mathbb R^n$ is introduced by means of the following Theorems~\ref{th-bal1} and \ref{th-bal2}. We denote
\begin{equation}\label{gammaa}
   \Gamma_{A,\zeta}:=\bigl\{\mu\in\mathfrak M^+:\ U^\mu\geqslant U^\zeta\quad\text{n.e.\ on $A$}\bigr\},
  \end{equation}
and let $\mathcal E'(A)$ be the closure of $\mathcal E^+(A)$ in the strong topology on $\mathcal E^+$. Being a strongly closed subcone of the strongly complete cone $\mathcal E^+$ (Section~\ref{sec-alpha}), $\mathcal E'(A)$ is likewise strongly complete, and it is convex.

\begin{theorem}\label{th-bal1}
For any $\zeta:=\sigma\in\mathcal E^+$, there is precisely one $\sigma^A\in\mathcal E'(A)$, called the inner balayage of $\sigma$ to $A$, that is determined by any one of the following {\rm(i$_3$)}--{\rm(iii$_3$)}.
\begin{itemize}
  \item[{\rm(i$_3$)}] There exists the unique $\sigma^A\in\mathcal E'(A)$ having the property\footnote{That is, the inner balayage $\sigma^A$ of $\sigma\in\mathcal E^+$ to $A$ is, actually, the orthogonal projection of $\sigma$ in the pre-Hil\-bert space $\mathcal E$ onto the (convex, strongly complete) cone $\mathcal E'(A)$, cf.\ \cite[Theorem~1.12.3]{E2}.}
  \[\|\sigma-\sigma^A\|=\min_{\mu\in\mathcal E'(A)}\,\|\sigma-\mu\|.\]
  \item[{\rm(ii$_3$)}] There exists the unique $\sigma^A\in\mathcal E'(A)$ satisfying the equality
  \begin{equation*}
   U^{\sigma^A}=U^\sigma\quad\text{n.e.\ on $A$}.
  \end{equation*}
  \item[{\rm(iii$_3$)}] $\sigma^A$ is the unique solution to the problem of minimizing the energy over the class $\Gamma_{A,\sigma}$, introduced by {\rm(\ref{gammaa})} with $\zeta:=\sigma$. That is, $\sigma^A\in\Gamma_{A,\sigma}$  and
  \[I(\sigma^A)=\min_{\mu\in\Gamma_{A,\sigma}}\,I(\mu).\]
 \end{itemize}
\end{theorem}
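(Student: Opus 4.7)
The plan is to base everything on the Hilbert projection principle applied to the strongly complete, convex cone $\mathcal E'(A)\subset\mathcal E$, and then translate the resulting variational inequalities into the potential-theoretic statements of (ii$_3$) and (iii$_3$). First, since $\mathcal E'(A)$ is a closed convex subset of the pre-Hilbert space $\mathcal E$ and is strongly complete (as noted just before the statement), the classical nearest-point theorem~--- which requires only completeness of the convex target set~--- produces a unique $\sigma^A\in\mathcal E'(A)$ minimizing $\|\sigma-\mu\|$, establishing (i$_3$). Exploiting that $\mathcal E'(A)$ is a cone, so that $\sigma^A+t(\mu-\sigma^A)\in\mathcal E'(A)$ for $t\geqslant 0$ small and also $t\sigma^A\in\mathcal E'(A)$ for $t\geqslant 0$, the minimality condition rewrites as the pair
\[
\langle \sigma^A-\sigma,\mu\rangle\geqslant 0\quad\text{for every }\mu\in\mathcal E'(A),\qquad \langle \sigma^A-\sigma,\sigma^A\rangle=0.
\]

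Next I would deduce (ii$_3$). The orthogonality relation yields $\int(U^{\sigma^A}-U^\sigma)\,d\sigma^A=0$, and since $U^{\sigma^A}\geqslant U^\sigma$ $\sigma^A$-almost everywhere (via the inequality applied to suitable $\mu$), one concludes $U^{\sigma^A}=U^\sigma$ $\sigma^A$-a.e. On the other hand, testing the variational inequality against every $\mu\in\mathcal E^+(A)$ (a subcone of $\mathcal E'(A)$) gives $\int(U^{\sigma^A}-U^\sigma)\,d\mu\geqslant 0$ for all such $\mu$, which by a Fuglede-type integral criterion for inner capacity forces $U^{\sigma^A}\geqslant U^\sigma$ nearly everywhere on $A$. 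Combining this with the $\sigma^A$-a.e.\ equality and the fact that $\sigma^A\in\mathcal E$ does not charge sets of inner capacity zero, I would upgrade to n.e.\ equality on $A$. Uniqueness within $\mathcal E'(A)$ is then immediate from strict positive definiteness: if $\sigma_1,\sigma_2\in\mathcal E'(A)$ both satisfy (ii$_3$), integrating $U^{\sigma_1}-U^{\sigma_2}=0$ n.e.\ on $A$ against $\sigma_1-\sigma_2$ yields $\|\sigma_1-\sigma_2\|^2=0$.

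For (iii$_3$), I would take any competitor $\mu\in\Gamma_{A,\sigma}$ with $I(\mu)<\infty$ and integrate the inequality $U^\mu\geqslant U^\sigma$ (valid n.e.\ on $A$) against $\sigma^A$; this is legal since $\sigma^A$ ignores inner polar sets. Using (ii$_3$) this gives
\[
\langle \mu,\sigma^A\rangle\geqslant\int U^\sigma\,d\sigma^A=\int U^{\sigma^A}\,d\sigma^A=I(\sigma^A),
\]
whence by Cauchy--Schwarz $I(\sigma^A)\leqslant\|\mu\|\,\|\sigma^A\|$ and hence $I(\sigma^A)\leqslant I(\mu)$. Competitors of infinite energy are trivially not smaller, so $\sigma^A$ minimizes on all of $\Gamma_{A,\sigma}$. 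Uniqueness of the minimizer follows from convexity of $\Gamma_{A,\sigma}$ and the parallelogram identity, exactly as in the standard argument. Finally, to close the loop, a measure satisfying (iii$_3$) also satisfies (ii$_3$) (it is the unique energy minimizer in $\Gamma_{A,\sigma}$), and then by the uniqueness in (ii$_3$) it coincides with the measure produced in (i$_3$).

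The main obstacle will be the step from the dual variational inequality $\int(U^{\sigma^A}-U^\sigma)\,d\mu\geqslant 0$ for all $\mu\in\mathcal E^+(A)$ to the pointwise statement $U^{\sigma^A}\geqslant U^\sigma$ n.e.\ on $A$: this requires the inner-capacity version of Fuglede's integral criterion, which is the technical feature distinguishing inner from outer balayage. A secondary subtlety is the verification that every $\sigma^A\in\mathcal E'(A)$ assigns zero mass to subsets of $A$ of inner capacity zero, an ingredient tacitly used in several of the passages from n.e.\ inequalities to measure-integral inequalities above.
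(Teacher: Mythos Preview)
The paper itself does not prove this theorem; it only cites \cite[Section~3]{Z-bal}, \cite[Section~4]{Z-arx1}, and \cite[Section~3]{Z-arx-22}. Your projection-based outline matches the strategy underlying those references, and your treatment of (i$_3$) and of the cone variational inequalities is correct.

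There is, however, a genuine gap in your passage to (ii$_3$). You assert ``$U^{\sigma^A}\geqslant U^\sigma$ $\sigma^A$-almost everywhere (via the inequality applied to suitable $\mu$)'', but the only candidates for such $\mu$ are restrictions $\sigma^A|_E$, and those need not lie in $\mathcal E'(A)$. More to the point, for a \emph{general} set $A$ the projection $\sigma^A\in\mathcal E'(A)$ need not be concentrated on $A$ at all~--- the paper says this explicitly in the footnote to Remark~\ref{r-h}~--- so an inequality valid merely n.e.\ on $A$ cannot be upgraded to a $\sigma^A$-a.e.\ statement by your route. The subtlety you flag at the end (that $\sigma^A$ should ignore inner-polar subsets of $A$) is automatic for any measure of finite energy and is not the obstruction; the real obstruction is that $\sigma^A(A^c)$ may be positive. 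The same issue reappears in your argument for (iii$_3$) when you integrate $U^\mu\geqslant U^\sigma$ against $\sigma^A$, and in your uniqueness proof for (ii$_3$); in those places the repair is easy: approximate $\sigma^A$ (resp.\ $\sigma_1,\sigma_2$) strongly by measures in $\mathcal E^+(A)$, integrate against the approximants (which \emph{are} concentrated on $A$ and ignore inner-polar sets), and pass to the limit via continuity of the inner product. For the existence part of (ii$_3$), however, one needs more. The standard route is to first handle compact $K\subset A$, where $\mathcal E'(K)=\mathcal E^+(K)$ so that your argument does go through and yields $U^{\sigma^K}\leqslant U^\sigma$ everywhere via the domination principle (this is where $\alpha\leqslant 2$ enters), then observe that $\bigcup_{K\in\mathfrak C_A}\mathcal E^+(K)$ is strongly dense in $\mathcal E'(A)$ so that $\sigma^K\to\sigma^A$ strongly, and finally pass to the limit to obtain $U^{\sigma^A}\leqslant U^\sigma$ n.e.\ on $\mathbb R^n$, whence equality n.e.\ on $A$.
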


\begin{proof}
  See \cite[Section~3]{Z-bal}, \cite[Section~4]{Z-arx1}, and \cite[Section~3]{Z-arx-22}.
\end{proof}

\begin{remark}\label{r-h} If $A$ satisfies $(\mathcal P)$ (in particular, if $A$ is quasiclosed), then
\begin{equation}\label{ee}
 \mathcal E'(A)=\mathcal E^+(A),
\end{equation}
and hence (i$_3$) and (ii$_3$) in Theorem~\ref{th-bal1} remain valid with $\mathcal E^+(A)$ in place of $\mathcal E'(A)$. The inner balayage $\sigma^A$, where $\sigma\in\mathcal E^+$, is then necessarily concentrated on $A$, and it is uniquely determined within $\mathcal E^+(A)$ by the equality $U^{\sigma^A}=U^\sigma$ n.e.\ on $A$.\footnote{For arbitrary $A$, this might fail to hold, which can be seen by means of elementary examples.}
\end{remark}

\begin{theorem}\label{th-bal2}For any $\zeta\in\mathfrak M^+$, there exists precisely one $\zeta^A\in\mathfrak M^+$, called the inner balayage of $\zeta$ to $A$, that is determined by any one of the following {\rm(i$_4$)}--{\rm(iii$_4$)}.
\begin{itemize}
\item[{\rm(i$_4$)}] $\zeta^A$ is the unique solution to the problem of minimizing the potential $U^\mu$ over the class $\Gamma_{A,\zeta}$, that is, $\zeta^A\in\Gamma_{A,\zeta}$ and
  \[U^{\zeta^A}=\min_{\mu\in\Gamma_{A,\zeta}}\,U^\mu\quad\text{on $\mathbb R^n$}.\]
  \item[{\rm(ii$_4$)}] There exists the unique $\zeta^A\in\mathfrak M^+$ satisfying the symmetry relation
  \begin{equation}\label{eq-sym}
    I(\zeta^A,\sigma)=I(\zeta,\sigma^A)\quad\text{for all $\sigma\in\mathcal E^+$},
  \end{equation}
  where $\sigma^A$ is uniquely determined by Theorem~{\rm\ref{th-bal1}} {\rm(cf.\ also Remark~\ref{r-h})}.
   \item[{\rm(iii$_4$)}] There exists the unique $\zeta^A\in\mathfrak M^+$ satisfying either of the two limit relations
  \begin{align*}\sigma_j^A&\to\zeta^A\quad\text{vaguely in $\mathfrak M^+$ as $j\to\infty$},\\
U^{\sigma_j^A}&\uparrow U^{\zeta^A}\quad\text{pointwise on $\mathbb R^n$ as $j\to\infty$},
\end{align*}
where $(\sigma_j)\subset\mathcal E^+$ denotes an arbitrary sequence having the property\footnote{Such a sequence $(\sigma_j)\subset\mathcal E^+$ does exist (see e.g.\ \cite[p.~272]{L} or \cite[p.~257, footnote]{Ca2}).}
\begin{equation}\label{eq-mon}U^{\sigma_j}\uparrow U^\zeta\quad\text{pointwise on $\mathbb R^n$ as
$j\to\infty$},\end{equation}
while $\sigma_j^A$ is uniquely determined by Theorem~{\rm\ref{th-bal1}} {\rm(cf.\ also Remark~\ref{r-h})}.
\end{itemize}
\end{theorem}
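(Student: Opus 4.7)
The plan is to bootstrap from Theorem~\ref{th-bal1} by means of an approximation in $\mathcal{E}^+$. First, invoke the footnoted fact to pick $(\sigma_j)\subset\mathcal{E}^+$ with $U^{\sigma_j}\uparrow U^\zeta$ pointwise on $\mathbb{R}^n$ (such a sequence exists since, e.g., one may truncate $\zeta$ by restriction to growing balls and regularize). For each $j$ apply Theorem~\ref{th-bal1}, which delivers $\sigma_j^A\in\mathcal{E}'(A)$ with $U^{\sigma_j^A}=U^{\sigma_j}$ n.e.\ on $A$. I would then show that $(U^{\sigma_j^A})$ is itself pointwise increasing on all of $\mathbb{R}^n$: on $A$ this holds nearly everywhere because $U^{\sigma_j}\leq U^{\sigma_{j+1}}$ there, and the extension to $\mathbb{R}^n$ is obtained by the Riesz domination principle applied inside the cone $\mathcal{E}'(A)$ (the quasi-everywhere inequality of potentials between measures of finite energy carried by $A$ propagates globally).

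Second, set $u:=\lim_j U^{\sigma_j^A}$. Since $u\leq U^\zeta$ and the latter is finite q.e., $u$ is not identically $+\infty$; by the classical principle that a monotone increasing limit of Riesz potentials is again a Riesz potential of some uniquely determined measure, there is $\zeta^A\in\mathfrak{M}^+$ with $u=U^{\zeta^A}$. The pointwise monotone statement in (iii$_4$) is then part of the construction. Vague convergence $\sigma_j^A\to\zeta^A$ follows from the Riesz lower-envelope/principle-of-des\-cent combined with the uniform local mass bound one gets from the uniform local bound on the $U^{\sigma_j^A}$, so one gets both limit relations in (iii$_4$).

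Third, verify (i$_4$) and (ii$_4$) and their uniqueness. For (i$_4$), any $\mu\in\Gamma_{A,\zeta}$ lies in every $\Gamma_{A,\sigma_j}$, whence by the analogue of Theorem~\ref{th-bal1}(iii$_3$) at the level of potentials (the minimum-potential property of $\sigma_j^A$ inside $\Gamma_{A,\sigma_j}$, which is the content of the finite-energy case) one has $U^\mu\geq U^{\sigma_j^A}$ everywhere; letting $j\to\infty$ gives $U^\mu\geq U^{\zeta^A}$. For (ii$_4$), in the finite-energy case the symmetry $I(\sigma_j^A,\sigma)=I(\sigma_j,\sigma^A)$ is immediate by integrating the equality of potentials on $A$ against the opposite measure (both $\sigma_j^A$ and $\sigma^A$ are carried, up to $c_*$-null sets, by $A$), and monotone convergence on both sides yields (\ref{eq-sym}). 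Uniqueness in (ii$_4$): if two candidates satisfy (\ref{eq-sym}), then $\int U^\sigma\,d\zeta^A_1=\int U^\sigma\,d\zeta^A_2$ for every $\sigma\in\mathcal{E}^+$; by Fubini this forces $U^{\zeta^A_1}=U^{\zeta^A_2}$ $\sigma$-a.e., and since Riesz potentials are l.s.c.\ and such $\sigma$'s (e.g.\ smooth densities of compact support) have full topological support, equality everywhere and then $\zeta^A_1=\zeta^A_2$ follows from uniqueness of Riesz representation. Uniqueness in (i$_4$) and (iii$_4$) is immediate from the characterization of potentials and from the already established uniqueness under (ii$_4$).

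The main obstacle will be the monotonicity/convergence step: promoting the comparison $U^{\sigma_j^A}\leq U^{\sigma_{j+1}^A}$ from ``n.e.\ on $A$'' to ``everywhere on $\mathbb{R}^n$'', which rests on an appropriate domination principle within $\mathcal{E}'(A)$, and identifying the limiting function $u$ with the Riesz potential of an honest Radon measure when $\zeta$ (and hence $\zeta^A$) may have infinite total mass and infinite energy. Once this is in hand, independence of $\zeta^A$ from the choice of $(\sigma_j)$ drops out of the uniqueness established through either (i$_4$) or (ii$_4$).
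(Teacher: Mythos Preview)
The paper does not actually prove Theorem~\ref{th-bal2}; its ``proof'' is a one-line citation to \cite[Sections~3,~4]{Z-bal}. Your sketch is essentially a reconstruction of that argument, and the overall architecture---approximate $\zeta$ from below by $(\sigma_j)\subset\mathcal E^+$, sweep term by term via Theorem~\ref{th-bal1}, pass to the monotone limit of potentials, then read off (i$_4$)--(iii$_4$)---is correct and is indeed the route taken in~\cite{Z-bal}.

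Two places in your write-up deserve tightening. First, the monotonicity step $U^{\sigma_j^A}\leqslant U^{\sigma_{j+1}^A}$ on all of $\mathbb R^n$: you appeal to a domination principle ``inside $\mathcal E'(A)$'', but $\sigma_j^A\in\mathcal E'(A)$ need not be concentrated on $A$, so the passage from ``n.e.\ on $A$'' to ``$\sigma_j^A$-a.e.''\ is not immediate. The clean fix is to prove the inequality first for compact $K\subset A$ (where $\sigma_j^K\in\mathcal E^+(K)$, so the Cartan--Deny domination principle applies directly), and then let $K\uparrow A$ using the strong convergence $\sigma_j^K\to\sigma_j^A$ from property~(e). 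Second, in your verification of (i$_4$) you invoke ``the minimum-potential property of $\sigma_j^A$ inside $\Gamma_{A,\sigma_j}$, which is the content of the finite-energy case''---but Theorem~\ref{th-bal1} as stated gives only the minimum-\emph{energy} characterization (iii$_3$), not a minimum-\emph{potential} one. You need to derive the latter separately (again via domination: $U^{\sigma_j^A}\leqslant U^\mu$ n.e.\ on $A$ for any $\mu\in\Gamma_{A,\sigma_j}$, hence everywhere, arguing through compact $K$ as above). The same compact-exhaustion device also cleanly justifies the finite-energy symmetry $I(\sigma_j^A,\sigma)=I(\sigma_j,\sigma^A)$, which you gloss over a bit quickly. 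With these points filled in, the proof goes through.
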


\begin{proof}
  See \cite[Sections~3, 4]{Z-bal}.
\end{proof}

\begin{remark}\label{rem-nodet}
  It is worth noting that, although for $\zeta\in\mathfrak M^+$, we still have
  \begin{equation}\label{ineq1}
   U^{\zeta^A}=U^\zeta\quad\text{n.e.\ on $A$},
  \end{equation}
  see \cite[Theorem~3.10]{Z-bal}, this equality no longer determines $\zeta^A$ uniquely (as it does for $\zeta:=\sigma\in\mathcal E^+$, cf.\ Theorem~\ref{th-bal1}(ii$_3$) or Remark~\ref{r-h}), which can be seen by taking $\zeta:=\varepsilon_y$, $y$ being an inner $\alpha$-irregular point of $A$ (for definition see Section~\ref{sec-reg}).
\end{remark}

\begin{remark}
Relation (\ref{eq-sym}) can be extended to $\sigma$ of infinite energy, that is,
\begin{equation}\label{alt}
I(\zeta^A,\mu)=I(\zeta,\mu^A)\quad\text{for all $\zeta,\mu\in\mathfrak M^+$}.
\end{equation}
Indeed, choose a sequence $(\sigma_j)\subset\mathcal E^+$ satisfying (\ref{eq-mon}). By virtue of Theorem~\ref{th-bal2}, see (ii$_4$) and (iii$_4$), the sequence $\bigl(U^{\sigma_j^A}\bigr)$ increases pointwise on $\mathbb R^n$ to $U^{\zeta^A}$, whereas
 \[\int U^{\sigma_j^A}\,d\mu=\int U^{\sigma_j}\,d\mu^A\quad\text{for all\ $j\in\mathbb N$}.\]
 Applying the monotone convergence theorem \cite[Section~IV.1, Theorem~3]{B2} to each of these two integrals, we obtain (\ref{alt}).
\end{remark}

Given $A\subset\mathbb R^n$, we denote by $\mathfrak C_A$ the upward directed set of all compact subsets $K$ of $A$, where $K_1\leqslant K_2$ if and only if $K_1\subset K_2$. If a net $(x_K)_{K\in\mathfrak C_A}\subset Y$ converges to $x_0\in Y$, $Y$ being a topological space, then we shall indicate this fact by writing
\begin{equation*}x_K\to x_0\text{ \ in $Y$ as $K\uparrow A$}.\end{equation*}

Along with (\ref{ineq1}), the following properties of the inner balayage $\zeta^A$, $\zeta\in\mathfrak M^+$ and $A\subset\mathbb R^n$ being arbitrary, will be useful in the sequel.

\begin{itemize}
\item[(a)] $U^{\zeta^A}\leqslant U^\zeta$ everywhere on $\mathbb R^n$ (see \cite[Theorem~3.10]{Z-bal}).
\item[(b)] Principle of positivity of mass: $\zeta^A(\mathbb R^n)\leqslant\zeta(\mathbb R^n)$ (see \cite[Corollary~4.9]{Z-bal}).
\item[(c)] Balayage "with a rest": $\zeta^A=(\zeta^Q)^A$ for any $Q\supset A$ (see \cite[Corollary~4.2]{Z-bal}).
\item[(d)] If $c_*(A)>0$, then $\zeta^A=0\iff\zeta=0$.
\item[(e)] Convergence assertions (see \cite[Theorem~4.5]{Z-bal}):
\begin{align}\label{conv1}
 \zeta^K&\to\zeta^A\quad\text{vaguely in $\mathfrak M^+$ as $K\uparrow A$},\\
 U^{\zeta^K}&\uparrow U^{\zeta^A}\quad\text{pointwise on $\mathbb R^n$ as $K\uparrow A$}.\notag
\end{align}
If $\zeta\in\mathcal E^+$, then the net $(\zeta^K)_{K\in\mathfrak C_A}$ also converges to $\zeta^A$ strongly in $\mathcal E^+$.
\end{itemize}

In the rest of this subsection, $A$ is assumed to satisfy $(\mathcal P)$. Then Theorem~\ref{th-bal1}(i$_3$) admits the following useful generalization.

\begin{theorem}\label{l-oo'}
Given $\zeta\in\mathfrak M^+$, assume that the inner balayage $\zeta^A$ is of finite energy. Then $\zeta^A$ is concentrated on $A$, i.e.\
$\zeta^A\in\mathcal E^+(A)$, and it can actually be found as the unique solution to the problem
\[\min_{\mu\in\mathcal E^+(A)}\,\Bigl[\|\mu\|^2-2\int U^\zeta\,d\mu\Bigr].\]
The same $\zeta^A$ is the only measure in $\mathcal E^+(A)$ with the property $U^{\zeta^A}=U^\zeta$ n.e.\ on $A$.
\end{theorem}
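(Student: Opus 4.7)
My plan is to establish the three assertions in sequence: first that $\zeta^A\in\mathcal E^+(A)$ (i.e.\ concentration on $A$), then that $\zeta^A$ solves the stated quadratic minimization problem, and finally the uniqueness characterization via potentials. The guiding strategy is to pull back properties of the $\mathcal E^+$-case (Theorem~\ref{th-bal1} and Remark~\ref{r-h}) via the approximation furnished by Theorem~\ref{th-bal2}(iii$_4$). Concretely, I will fix a sequence $(\sigma_j)\subset\mathcal E^+$ with $U^{\sigma_j}\uparrow U^\zeta$ pointwise; then by (iii$_4$) the measures $\sigma_j^A$ converge vaguely to $\zeta^A$ and $U^{\sigma_j^A}\uparrow U^{\zeta^A}$, while under $(\mathcal P)$ Remark~\ref{r-h} ensures that each $\sigma_j^A$ already lies in $\mathcal E^+(A)$.

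The hard part will be the concentration assertion, since the closedness of $\mathcal E^+(A)$ is only strong (not vague), so the vague convergence $\sigma_j^A\to\zeta^A$ has to be upgraded to a strong one. First I will estimate
\[I(\sigma_j^A)=\int U^{\sigma_j^A}\,d\sigma_j^A\leqslant\int U^{\zeta^A}\,d\sigma_j^A=I(\zeta^A,\sigma_j^A)\leqslant\|\zeta^A\|\,\|\sigma_j^A\|,\]
using $U^{\sigma_j^A}\leqslant U^{\zeta^A}$ pointwise and Cauchy--Schwarz in $\mathcal E$; this gives $\|\sigma_j^A\|\leqslant\|\zeta^A\|$. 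Combined with vague lower semicontinuity of the energy norm, it forces $\|\sigma_j^A\|\to\|\zeta^A\|$. I will then expand
\[\|\sigma_j^A-\zeta^A\|^2=\|\sigma_j^A\|^2+\|\zeta^A\|^2-2I(\sigma_j^A,\zeta^A),\]
and apply Fubini together with monotone convergence to $I(\sigma_j^A,\zeta^A)=\int U^{\sigma_j^A}\,d\zeta^A$ to conclude that the right-hand side tends to zero. Thus $\sigma_j^A\to\zeta^A$ strongly in $\mathcal E^+$, and property $(\mathcal P)$ immediately yields $\zeta^A\in\mathcal E^+(A)$.

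For the minimization statement, the key observation is that every $\mu\in\mathcal E^+(A)$ satisfies $\mu^A=\mu$: under $(\mathcal P)$, Theorem~\ref{th-bal1}(ii$_3$) asserts that $\mu^A$ is the unique element of $\mathcal E'(A)=\mathcal E^+(A)$ whose potential equals $U^\mu$ n.e.\ on $A$, and $\mu$ itself trivially qualifies. The extended symmetry (\ref{alt}) then gives $I(\mu,\zeta^A)=I(\zeta,\mu^A)=I(\zeta,\mu)=\int U^\zeta\,d\mu$, so that for $\mu\in\mathcal E^+(A)$
\[\|\mu\|^2-2\int U^\zeta\,d\mu=\|\mu-\zeta^A\|^2-\|\zeta^A\|^2,\]
whose unique minimum, by strict positive definiteness of $\kappa_\alpha$, is attained at $\mu=\zeta^A$ (which now lies in $\mathcal E^+(A)$). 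Finally, for the last assertion I will argue that if $\mu\in\mathcal E^+(A)$ satisfies $U^\mu=U^\zeta$ n.e.\ on $A$, then by (\ref{ineq1}) also $U^\mu=U^{\zeta^A}$ n.e.\ on $A$, hence both $\mu$- and $\zeta^A$-a.e.\ (measures of finite energy do not charge sets of inner capacity zero); expanding $\|\mu-\zeta^A\|^2$ as a sum of such integrals then yields $\mu=\zeta^A$ by strict positive definiteness.
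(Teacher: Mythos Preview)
Your proof is correct, but the first step (concentration of $\zeta^A$ on $A$) is substantially more laborious than the paper's argument. The paper bypasses the approximation scheme entirely by the single observation that $\zeta^A=(\zeta^A)^A$ (balayage ``with a rest'', property~(c) with $Q:=A$); since $\zeta^A\in\mathcal E^+$ by hypothesis, one may apply Theorem~\ref{th-bal1}(i$_3$) directly with $\sigma:=\zeta^A$, and $(\mathcal P)$ (i.e.\ $\mathcal E'(A)=\mathcal E^+(A)$) instantly yields $\zeta^A\in\mathcal E^+(A)$. Your approximation-and-strong-convergence argument achieves the same end but requires the norm estimate via Cauchy--Schwarz, the principle of descent, and a monotone-convergence computation---all of which the idempotence trick renders unnecessary. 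For the minimization, the paper uses $\int U^{\zeta^A}\,d\mu=\int U^\zeta\,d\mu$ (from~(\ref{ineq1})) to rewrite the functional, whereas you reach the same identity via the symmetry relation~(\ref{alt}) and $\mu^A=\mu$; these are equivalent reformulations. For the final uniqueness, the paper again invokes Theorem~\ref{th-bal1}(ii$_3$) with $\sigma:=\zeta^A$ to get $\mu_0=(\zeta^A)^A=\zeta^A$, while you expand $\|\mu-\zeta^A\|^2$ directly; both work, though in your version you should make explicit (as the paper does via Lemma~\ref{str-sub}) that the union of the two exceptional sets remains of inner capacity zero.
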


\begin{proof} Since $\zeta^A=(\zeta^A)^A$ (see (c) with $Q:=A$), substituting (\ref{ee}) into Theorem~\ref{th-bal1}(i$_3$) with
$\sigma:=\zeta^A\in\mathcal E^+$ shows that, indeed, $\zeta^A\in\mathcal E^+(A)$ as well as that $\zeta^A$ is the orthogonal projection of itself onto $\mathcal E^+(A)$; or equivalently that $\zeta^A$ is the (unique) solution to the problem of minimizing the functional $\|\mu\|^2-2\int U^{\zeta^A}\,d\mu$, $\mu$ ranging over $\mathcal E^+(A)$. This proves the former part of the theorem, for, in consequence of (\ref{ineq1}), $\int U^{\zeta^A}\,d\mu=\int U^\zeta\,d\mu$ for all $\mu\in\mathcal E^+(A)$.\footnote{Here we have used the fact, to be often useful throughout the paper, that any $\mu$-measurable subset of $A$ of inner capacity zero is $\mu$-negligible for any $\mu\in\mathcal E^+(A)$, cf.\ \cite[Lemma~2.5]{Z-arx-22}.}

For the latter part, assume that the equality $U^{\zeta^A}=U^\zeta$ n.e.\ on $A$ is also fulfilled for some $\mu_0\in\mathcal E^+(A)$ in place of $\zeta^A$. Then, by the strengthened
version of countable subadditivity for inner capacity (see Lemma~\ref{str-sub} below),
\[U^{\mu_0}=U^\zeta=U^{\zeta^A}\quad\text{n.e.\ on $A$},\]
whence $\mu_0=(\zeta^A)^A$, by virtue of Theorem~\ref{th-bal1}(ii$_3$) with $\sigma:=\zeta^A\in\mathcal E^+$. Combining this with   $(\zeta^A)^A=\zeta^A$ we obtain $\mu_0=\zeta^A$, thereby completing
the whole proof.\end{proof}

\begin{lemma}\label{str-sub}
 For arbitrary $Q\subset\mathbb R^n$ and universally measurable $U_j\subset\mathbb R^n$,
\[c_*\Bigl(\bigcup_{j\in\mathbb N}\,Q\cap U_j\Bigr)\leqslant\sum_{j\in\mathbb N}\,c_*(Q\cap U_j).\]
\end{lemma}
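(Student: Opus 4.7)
The plan is to reduce the statement to the standard countable subadditivity of the \emph{outer} capacity, exploiting the universal measurability of the $U_j$ to pass between inner and outer capacity on the relevant auxiliary sets. The key observation is that, while $Q$ itself is arbitrary, for any compact $K\subset Q$ the traces $K\cap U_j$ are universally measurable and hence capacitable.

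First, I would appeal to the definition $c_*(E):=\sup\{c(K):\ K\subset E,\ K\text{ compact}\}$ and fix an arbitrary compact $K\subset\bigcup_{j\in\mathbb N}Q\cap U_j$. Since $K$ is then contained in $Q$,
\[
K=K\cap\bigcup_{j\in\mathbb N}U_j=\bigcup_{j\in\mathbb N}(K\cap U_j),
\]
and each $K\cap U_j$ is universally measurable (intersection of a Borel and a universally measurable set). Using that compact sets are capacitable and that the outer Riesz capacity $c^*$ is countably subadditive (see \cite[Section~II.2.6]{L}), I would then write
\[
c(K)=c^*(K)=c^*\Bigl(\bigcup_{j\in\mathbb N}K\cap U_j\Bigr)\leqslant\sum_{j\in\mathbb N}c^*(K\cap U_j).
\]
By the capacitability of universally measurable sets for the Riesz kernel (a consequence of Choquet's theorem applied in the form stated in \cite[Theorem~2.8]{L}, combined with the fact that universally measurable sets are inner approximated by compact sets in capacity), each $c^*(K\cap U_j)$ coincides with $c_*(K\cap U_j)$. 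Since $K\cap U_j\subset Q\cap U_j$, monotonicity of the inner capacity then yields
\[
c(K)\leqslant\sum_{j\in\mathbb N}c_*(K\cap U_j)\leqslant\sum_{j\in\mathbb N}c_*(Q\cap U_j).
\]
Taking the supremum over all compact $K\subset\bigcup_{j\in\mathbb N}Q\cap U_j$ gives the desired inequality.

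The main obstacle is the middle step: one needs to know that $c^*(K\cap U_j)=c_*(K\cap U_j)$, i.e.\ the capacitability of $K\cap U_j$. For Borel (indeed $K$-analytic) sets this is classical Choquet capacitability; the extension to universally measurable sets is the nontrivial ingredient and must be invoked from the theory of Choquet capacities. Apart from this, everything reduces to the familiar properties of $c^*$ (countable subadditivity and monotonicity) together with $c(K)=c_*(K)=c^*(K)$ for compact $K$. Note that the strengthening over the naive bound $c_*(Q)\leqslant c^*(Q)\leqslant\sum c^*(Q\cap U_j)$ is precisely that the right-hand sum involves \emph{inner} capacities, which is what makes the lemma useful in the uniqueness argument in the proof of Theorem~\ref{l-oo'}.
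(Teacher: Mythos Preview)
Your argument hinges on the identity $c^*(K\cap U_j)=c_*(K\cap U_j)$, i.e.\ the capacitability of $K\cap U_j$, and this is where the gap lies. Choquet's theorem in the form of \cite[Theorem~2.8]{L} yields capacitability for Borel (more generally, analytic) sets, not for arbitrary universally measurable sets; the latter class is strictly larger, and capacitability does not transfer to it by any standard argument. Your own parenthetical justification --- ``universally measurable sets are inner approximated by compact sets in capacity'' --- is precisely the assertion $c_*=c^*$ on that class, so the reasoning is circular. You correctly flag this step as the main obstacle, but it is not one that can simply be ``invoked from the theory of Choquet capacities.''

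The paper merely cites Fuglede \cite[pp.~157--158]{F1}, whose argument bypasses outer capacity and capacitability entirely. Given compact $K\subset\bigcup_j(Q\cap U_j)$, one takes the equilibrium measure $\gamma_K$, for which $c(K)=\gamma_K(\mathbb R^n)$ and $U^{\gamma_K}\leqslant1$ on $\mathbb R^n$. Universal measurability of the $U_j$ enters only to guarantee that each $K\cap U_j$ is $\gamma_K$-measurable, so that $c(K)=\gamma_K(K)\leqslant\sum_j\gamma_K(K\cap U_j)$. Each restriction $\mu_j:=\gamma_K|_{K\cap U_j}$ lies in $\mathcal E^+$, is concentrated on $Q\cap U_j$, and satisfies $U^{\mu_j}\leqslant U^{\gamma_K}\leqslant1$; integrating against equilibrium measures of compact subsets of $Q\cap U_j$ then gives $\mu_j(\mathbb R^n)\leqslant c_*(Q\cap U_j)$. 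Summing over $j$ and taking the supremum over $K$ finishes the proof without ever touching $c^*$.
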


\begin{proof}
See \cite[pp.~157--158]{F1} (for $\alpha=2$, cf.\ \cite[p.~253]{Ca2}); compare with \cite[p.~144]{L}.
\end{proof}

\begin{open}
It is still unknown whether $\zeta^A$, $\zeta\in\mathfrak M^+$ being arbitrary, is concentrated on $A$ (unless, of course, the set $A$ is closed, or $\zeta^A\in\mathcal E^+$, cf.\ Theorem~\ref{l-oo'}). What we can prove so far, is the following.

\begin{theorem}\label{l-oo''}
$\zeta^A\in\mathfrak M^+(A)$ does hold if $A$ is quasiclosed, while $\zeta$ is bounded.
\end{theorem}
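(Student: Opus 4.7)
The plan is to approximate $\zeta$ by measures of finite energy via Theorem~\ref{th-bal2}(iii$_4$), apply Theorem~\ref{l-oo'} to those approximants, and transfer the concentration property to $\zeta^A$ through the vague limit by exploiting the $F_\sigma$-representation of quasiclosed sets modulo a set of outer capacity zero.

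First I would pick $(\sigma_j)\subset\mathcal E^+$ with $U^{\sigma_j}\uparrow U^\zeta$, whose existence is guaranteed in the footnote to Theorem~\ref{th-bal2}(iii$_4$); that same theorem yields $\sigma_j^A\to\zeta^A$ vaguely in $\mathfrak M^+$. Since $A$ is quasiclosed, property~$(\mathcal P)$ holds, so Theorem~\ref{l-oo'} applies to each $\sigma_j$ and delivers $\sigma_j^A\in\mathcal E^+(A)$. Now quasiclosedness provides, for each $k\in\mathbb N$, a closed $F_k\subset A$ with $c^*(A\setminus F_k)<1/k$; replacing $F_k$ by $F_1\cup\cdots\cup F_k$ makes $(F_k)$ increasing. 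Setting $F:=\bigcup_kF_k\subset A$ and using monotonicity of outer capacity gives $c^*(A\setminus F)=0$. Enclosing $A\setminus F$ in a Borel set of zero outer capacity and invoking the fact that finite-energy measures do not charge such sets (cited already in the proof of Theorem~\ref{l-oo'}), one obtains $\sigma_j^A(A\setminus F)=0$, hence $\sigma_j^A(F_k^c)=0$ for every $j,k$.

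To pass to the limit, fix $k$ and any compact $K\subset F_k^c$ (the latter being open as $F_k$ is closed); choose $\varphi\in C_0(\mathbb R^n)$ with $0\leq\varphi\leq 1$, $\varphi\equiv 1$ on $K$, and $\mathrm{supp}\,\varphi\subset F_k^c$. Then $\int\varphi\,d\sigma_j^A\leq\sigma_j^A(F_k^c)=0$, and vague convergence of $(\sigma_j^A)$ forces
\[\zeta^A(K)\leq\int\varphi\,d\zeta^A=\lim_{j\to\infty}\int\varphi\,d\sigma_j^A=0.\]
Since $\zeta^A$ is a Radon measure (finite by property~(b) because $\zeta(\mathbb R^n)<\infty$), inner regularity on the open set $F_k^c$ gives $\zeta^A(F_k^c)=0$ for every $k$. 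As $A^c\subset F^c\subset F_k^c$, one concludes $\zeta^A(A^c)=0$, i.e.\ $\zeta^A\in\mathfrak M^+(A)$. The central obstacle is that vague convergence alone cannot preserve concentration on non-closed sets --- a priori $\zeta^A$ could charge $\overline A\setminus A$ --- and this is resolved in two stages: Theorem~\ref{l-oo'} combined with $c^*(A\setminus F)=0$ first upgrades each $\sigma_j^A$ from being concentrated on $A$ to being concentrated on the Borel subset $F\subset A$, after which the openness of each $F_k^c$ allows $C_0$-test functions to carry the zero-charge information through the vague limit via inner regularity of the Radon measure $\zeta^A$.
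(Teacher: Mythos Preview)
Your argument has a genuine gap at the step ``$\sigma_j^A(A\setminus F)=0$, hence $\sigma_j^A(F_k^c)=0$ for every $j,k$.'' From $\sigma_j^A\in\mathcal E^+(A)$ you get $\sigma_j^A(A^c)=0$, and together with $\sigma_j^A(A\setminus F)=0$ this yields only $\sigma_j^A(F^c)=0$. But $F_k\subset F$, so $F_k^c\supset F^c$, and in fact $\sigma_j^A(F_k^c)=\sigma_j^A(F\setminus F_k)$; the set $F\setminus F_k\subset A\setminus F_k$ merely has outer capacity less than $1/k$, and a finite-energy measure can certainly charge sets of small but positive capacity. Consequently the test-function bound $\int\varphi\,d\sigma_j^A\leqslant\sigma_j^A(F_k^c)$ is not zero in general, and the vague-limit step collapses. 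A red flag is that your argument never uses the boundedness of $\zeta$ in an essential way --- inner regularity of a positive Radon measure on open sets holds without finiteness --- so if the argument worked it would in fact settle the open question preceding the theorem for \emph{arbitrary} $\zeta\in\mathfrak M^+$. There is also a secondary issue: the definition of quasiclosedness supplies closed sets $C_k$ with $c^*(A\triangle C_k)\to 0$, not closed $F_k\subset A$ with $c^*(A\setminus F_k)\to 0$, and the latter needs justification.

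The paper proceeds differently. It approximates via the net $(\zeta^K)_{K\in\mathfrak C_A}$, which lies in $\mathfrak M^+_q\cap\mathfrak M^+(A)$ with $q:=\zeta(\mathbb R^n)$ (by property~(b)) and converges vaguely to $\zeta^A$ (by (\ref{conv1})), and then invokes Fuglede \cite[Corollary~6.2]{Fu4}: for a hereditary vaguely compact class $\mathcal J$ (here $\mathfrak M^+_q$) and quasiclosed $H$, vague accumulation points of nets in $\mathcal J\cap\mathfrak M^+(H)$ remain in $\mathcal J\cap\mathfrak M^+(H)$. Boundedness of $\zeta$ is precisely what places the approximants inside the vaguely compact $\mathfrak M^+_q$, which is the hypothesis making Fuglede's result applicable; this is the ingredient your approach is missing.
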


\begin{proof} Define $\mathfrak M^+_q:=\{\mu\in\mathfrak M^+:\ \mu(\mathbb R^n)\leqslant q\}$, where $q:=\zeta(\mathbb R^n)<\infty$. Then $\mathfrak M^+_q$ is vaguely compact \cite[Section~III.1.9, Corollary~2]{B2} and hereditary \cite[Definition~5.2]{Fu4}.
On the other hand, $\zeta^A$ is the vague limit of the net
$(\zeta^K)_{K\in\mathfrak C_A}\subset\mathfrak M^+(A)$, see (\ref{conv1}). Since
$\zeta^K(\mathbb R^n)\leqslant\zeta(\mathbb R^n)=q$, see (b), we actually have $(\zeta^K)_{K\in\mathfrak C_A}\subset\mathfrak M^+_q\cap\mathfrak M^+(A)$.
Applying \cite[Corollary~6.2]{Fu4} with $\mathcal J:=\mathfrak M^+_q$ and $H:=A$, we therefore conclude that $(\zeta^K)_{K\in\mathfrak C_A}$ has a vague limit point $\nu_0\in\mathfrak M^+_q\cap\mathfrak M^+(A)$. The vague topology on $\mathfrak M^+$ being Hausdorff, $\nu_0=\zeta^A$, whence the claim.
\end{proof}

\begin{corollary}
  If $A$ is quasiclosed, then for any $x\in\mathbb R^n$, the inner $\alpha$-harmonic measure $\varepsilon_x^A$ is concentrated on $A$.\footnote{See \cite[Theorem~4.1]{Z-bal2} for a description of the support $S(\varepsilon_x^A)$. For refined properties of $\varepsilon_x^A$ in the case where $\alpha<2$ while $A$ is the complement of an open, bounded set with Lipschitz boundary, see Bogdan \cite[Lemma~6]{KB}; compare with Section~\ref{sec-harm} below.}
  \end{corollary}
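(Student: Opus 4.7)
The statement is an immediate corollary of the preceding Theorem~\ref{l-oo''}, so the plan is essentially to verify that the hypotheses of that theorem apply. First I would recall that, by the very definition recalled in Section~\ref{sec-alpha}, the inner $\alpha$-harmonic measure is $\varepsilon_x^A:=(\varepsilon_x)^A$, i.e.\ it is the inner balayage of the unit Dirac mass $\varepsilon_x$ to $A$. Thus we are looking at $\zeta^A$ for the specific choice $\zeta:=\varepsilon_x\in\mathfrak M^+$.

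Second, I would verify the two hypotheses of Theorem~\ref{l-oo''}: (a) $A$ is quasiclosed, which is assumed; and (b) $\zeta$ is bounded, which is trivial since $\varepsilon_x(\mathbb R^n)=1<\infty$. Therefore Theorem~\ref{l-oo''} applies and yields $\varepsilon_x^A=(\varepsilon_x)^A\in\mathfrak M^+(A)$, which is exactly the statement that $\varepsilon_x^A$ is concentrated on $A$.

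There is really no obstacle here — the entire content of the corollary is packed into Theorem~\ref{l-oo''}, whose proof used the vague compactness and heredity of the mass-bounded cone $\mathfrak M^+_q$ together with Fuglede's quasi\-closedness result \cite[Corollary~6.2]{Fu4} and the convergence assertion (\ref{conv1}). Once that machinery is in place, specializing $\zeta$ to $\varepsilon_x$ is a one-line verification, and no additional argument is needed.
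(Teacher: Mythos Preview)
Your proposal is correct and matches the paper's approach exactly: the corollary is stated in the paper without a separate proof precisely because it follows immediately from Theorem~\ref{l-oo''} by taking $\zeta:=\varepsilon_x$, a bounded measure. Your verification of the two hypotheses (quasiclosedness of $A$ and boundedness of $\varepsilon_x$) is all that is needed.
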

\end{open}

\subsection{Inner $\alpha$-regular and $\alpha$-irregular points}\label{sec-reg} A point $y\in\overline{A}$ is said to be {\it inner $\alpha$-irregular} for $A$ if $\varepsilon_y^A\ne\varepsilon_y$; we denote by $A^i$ the set of all those $y$. By the Wiener type criterion \cite[Theorem~6.4]{Z-bal}, $A^i$ consists of all $y\in\overline{A}$ such that
\begin{equation}\label{w}\sum_{j\in\mathbb N}\,\frac{c_*(A_j)}{q^{j(n-\alpha)}}<\infty,\end{equation}
where $q\in(0,1)$ and $A_j:=A\cap\{x\in\mathbb R^n:\ q^{j+1}<|x-y|\leqslant q^j\}$ (and hence $A^i\subset\partial A$); while by the Kel\-logg--Ev\-ans type theorem \cite[Theorem~6.6]{Z-bal},\footnote{Observe that both (\ref{w}) and (\ref{KE}) refer to inner capacity; compare with the Kel\-l\-ogg--Ev\-ans and Wiener type theorems established for {\it outer} $\alpha$-irregular points (see e.g.\ \cite{BH,Br,Ca2,Doob}). Regarding (\ref{KE}), we also remark that the whole set $A^i$ may be of nonzero capacity \cite[Section~V.4.12]{L}.}
\begin{equation}\label{KE}c_*(A\cap A^i)=0.\end{equation}

All other points of $\overline{A}$ are said to be {\it inner $\alpha$-regular} for $A$; we denote $A^r:=\overline{A}\setminus A^i$. Similarly as in \cite[p.~286]{L}, applying (\ref{alt}) with $\zeta:=\varepsilon_y$ gives
\[y\in A^r\iff U^{\mu^A}(y)=U^\mu(y)\quad\text{for all $\mu\in\mathfrak M^+$}.\]

\subsection{Basic facts about inner equilibrium measure}\label{sec-eq} In the current research, the inner ($\alpha$-Riesz) equilibrium measure $\gamma_A$ of $A\subset\mathbb R^n$ is understood in an extended sense where $I(\gamma_A)$ along with $\gamma_A(\mathbb R^n)$ might be $+\infty$.
Define
\begin{equation*}
\Gamma_A:=\bigl\{\mu\in\mathfrak M^+:\ U^\mu\geqslant1\quad\text{n.e.\ on $A$}\bigr\}.\end{equation*}

\begin{definition}[{\rm see \cite[Section~5]{Z-bal}}]\label{def-eq} $\gamma_A\in\mathfrak M^+$ is said to be {\it the inner equilibrium measure} of $A$ if it is of minimum potential in the class $\Gamma_A$, that is, if $\gamma_A\in\Gamma_A$ and
\begin{equation}\label{G'}U^{\gamma_A}=\min_{\mu\in\Gamma_A}\,U^\mu\quad\text{on $\mathbb R^n$}.\end{equation}
\end{definition}

Applying \cite[Theorem~1.12]{L} shows that such $\gamma_A$ is unique (if it exists).

As seen from the following theorem, the concept of inner equilibrium measure is closely related to that of inner $\alpha$-harmonic measure. For any $y\in\mathbb R^n$, we denote by $A_y^*$ the inverse of $A$ with respect to the sphere $S_{y,1}:=\partial B_{y,1}$, cf.\ \cite[Section~IV.5.19]{L}.

\begin{theorem}\label{th-eq} For arbitrary $A\subset\mathbb R^n$, the following {\rm(i$_5$)}--{\rm(vi$_5$)} are equivalent.
\begin{itemize}
  \item[{\rm(i$_5$)}] There exists the {\rm(}unique{\rm)} inner equilibrium measure $\gamma_A$ of $A$.
  \item[{\rm(ii$_5$)}] There exists $\nu\in\mathfrak M^+$ having the property
\[\essinf_{x\in A}\,U^\nu(x)>0,\]
the infimum being taken over all of $A$ except for a subset of $c_*(\cdot)=0$.
\item[{\rm(iii$_5$)}]For some {\rm(}equivalently, every{\rm)} $y\in\mathbb R^n$,
\begin{equation}\label{iii}\sum_{j\in\mathbb N}\,\frac{c_*(A_j)}{q^{j(n-\alpha)}}<\infty,\end{equation}
where $q\in(1,\infty)$ and $A_j:=A\cap\{x\in\mathbb R^n:\ q^j\leqslant|x-y|<q^{j+1}\}$.
\item[{\rm(iv$_5$)}]For some {\rm(}equivalently, every{\rm)} $y\in\mathbb R^n$,
\begin{equation*}y\in(A_y^*)^{rc}\quad\bigl({}:=\mathbb R^n\setminus(A_y^*)^r\bigr).\end{equation*}
\item[{\rm(v$_5$)}]For some {\rm(}equivalently, every{\rm)} $y\in\mathbb R^n$, $\varepsilon_y^{A_y^*}$ is $C$-absolutely continuous.\footnote{$\mu\in\mathfrak M^+$ is said to be {\it $C$-absolutely continuous} if $\mu(K)=0$ for every compact set $K\subset\mathbb R^n$ with $c(K)=0$. This certainly occurs if $I(\mu)<\infty$, but not conversely (see \cite[pp.~134--135]{L}).\label{f-C}}
\item[{\rm(vi$_5$)}] There exists $\chi\in\mathfrak M^+$ having the property $\chi^A(\mathbb R^n)<\chi(\mathbb R^n)$.\footnote{Compare with Section~\ref{some}, (b).}
\end{itemize}

Furthermore, if any one of these {\rm(i$_5$)--(vi$_5$)} is fulfilled, then for every $y\in\mathbb R^n$,
\begin{equation}\label{har-eq}\varepsilon_y^{A_y^*}=(\gamma_A)^*,\end{equation}
where $(\gamma_A)^*$ is the Kelvin transform of $\gamma_{A}\in\mathfrak M^+$ with respect to the sphere $S_{y,1}$.\footnote{For the concept of Kelvin transformation, see \cite[Section~14]{R} as well as \cite[Section~IV.5.19]{L}.}
\end{theorem}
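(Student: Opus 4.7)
The plan is to establish the six-fold equivalence through Kelvin inversion at the point $y$, which carries $A$ to $A_y^*$ and the point at infinity to $y$, thereby converting every assertion about $\gamma_A$ near infinity into a corresponding assertion about the inner harmonic measure $\varepsilon_y^{A_y^*}$ near $y$. The explicit identity (\ref{har-eq}) will serve as the bridge, and the remaining equivalences will follow by combining this bridge with the Wiener-type and Kellogg--Evans-type criteria already recorded in Section~\ref{sec-reg} and in \cite[Section~5]{Z-bal}.

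First I would derive (\ref{har-eq}). Assume $\gamma_A$ exists; a domination argument applied to the defining property (\ref{G'}) yields $U^{\gamma_A}=1$ n.e.\ on $A$. Under inversion in $S_{y,1}$, the Kelvin transformation formula of \cite[Section~IV.5.19]{L} gives $U^{(\gamma_A)^*}(x^*)=|x^*-y|^{\alpha-n}\,U^{\gamma_A}(x)$, so this equilibrium equation rewrites as $U^{(\gamma_A)^*}=U^{\varepsilon_y}$ n.e.\ on $A_y^*$. Since $(\gamma_A)^*$ inherits from $\gamma_A$ the minimal-potential property within the class $\Gamma_{A_y^*,\varepsilon_y}$, Theorem~\ref{th-bal2}(i$_4$) forces $(\gamma_A)^*=\varepsilon_y^{A_y^*}$.

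Next I would chain the equivalences. The block (i$_5$)$\Leftrightarrow$(iii$_5$)$\Leftrightarrow$(vi$_5$) is available from \cite[Section~5]{Z-bal}: all three express that $A$ is inner $\alpha$-thin at infinity, defined via the convergence of the series in (\ref{iii}) at some (equivalently, every) $y$. The equivalence (iii$_5$)$\Leftrightarrow$(iv$_5$) follows by transporting the Wiener criterion (\ref{w}) for $A_y^*$ at $y$ to $A$ at infinity: Kelvin inversion sends the annulus $\{q^j\leqslant|x-y|<q^{j+1}\}$ to an annulus near $y$ in $A_y^*$-space whose inner capacity rescales by a power of $q^{n-\alpha}$, making the series (\ref{iii}) and (\ref{w}) simultaneously convergent. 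For (iv$_5$)$\Leftrightarrow$(v$_5$), if $y\in(A_y^*)^r$ then $\varepsilon_y^{A_y^*}=\varepsilon_y$ charges the polar singleton $\{y\}$ and fails $C$-absolute continuity; whereas if $y\in(A_y^*)^{rc}$ then (\ref{har-eq}) reduces $C$-absolute continuity of $\varepsilon_y^{A_y^*}$ to that of $\gamma_A$, which is obtained by approximating $\gamma_A$ by the finite-energy equilibrium measures $\gamma_K$ of compact subsets $K\uparrow A$, each automatically $C$-absolutely continuous. Finally (i$_5$)$\Leftrightarrow$(ii$_5$) is immediate: $\nu:=\gamma_A$ witnesses one direction, while in the other a scaled majorant $\nu/c\in\Gamma_A$ feeds into the minimization construction of \cite[Section~5]{Z-bal} and produces $\gamma_A$.

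The principal obstacle will be the implication (iv$_5$)$\Rightarrow$(v$_5$): transferring $C$-absolute continuity from each approximating $\gamma_K$ to the extended measure $\gamma_A$, which may have infinite total mass and energy. Vague convergence alone does not preserve zero mass on a fixed compact polar set, so the argument must exploit the monotone convergence $U^{\gamma_K}\uparrow U^{\gamma_A}$ jointly with the strengthened countable subadditivity of inner capacity recorded in Lemma~\ref{str-sub}, in order to preclude accumulation of mass on any compact polar subset of $\mathbb R^n$ in the limit.
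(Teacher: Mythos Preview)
The paper does not supply an argument for this theorem: its entire proof is the one-line citation ``See \cite{Z-bal2} (Theorem~2.1 and Corollary~5.3).'' There is therefore no in-paper approach to compare against; what you have written is a reconstruction of the argument underlying those cited results, and the strategy you outline---pass from $A$ at infinity to $A_y^*$ at $y$ via Kelvin inversion, identify $(\gamma_A)^*$ with $\varepsilon_y^{A_y^*}$ through the minimal-potential characterization in Theorem~\ref{th-bal2}(i$_4$), and then read off the equivalences by transporting the Wiener criterion~(\ref{w}) and the mass-loss criterion---is exactly the mechanism used in \cite{Z-bal2}.

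One remark on the obstacle you flag. The passage from $C$-absolute continuity of each $\gamma_K$ to that of $\gamma_A$ via vague convergence is indeed delicate, but the route through Lemma~\ref{str-sub} is not the natural one. A cleaner line (and the one implicit in \cite{Z-bal2}) is to stay on the $A_y^*$ side: once (iv$_5$) gives $y\notin(A_y^*)^r$, the point $y$ is the only candidate for a polar atom of $\varepsilon_y^{A_y^*}$, and one shows directly that $\varepsilon_y^{A_y^*}(\{y\})=0$ by decomposing $\varepsilon_y^{A_y^*}=c\varepsilon_y+\tau$ and observing that $\tau/(1-c)$ would again satisfy the minimal-potential characterization of the balayage, forcing $c=0$. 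The remaining compact polar sets are handled because $\varepsilon_y^{A_y^*}$ restricted away from $y$ has bounded potential near $y$, hence locally finite energy. This avoids the infinite-mass issue for $\gamma_A$ altogether.
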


\begin{proof}
See \cite{Z-bal2} (Theorem~2.1 and Corollary~5.3).
\end{proof}

\begin{definition}[{\rm cf.\ \cite[Definition~2.1]{Z-bal2}}]\label{def-thin} $A\subset\mathbb R^n$ is said to be {\it inner $\alpha$-thin at infinity} if any one of the above {\rm(i$_5$)--(vi$_5$)} is fulfilled.
\end{definition}

\begin{remark}\label{doob}
As seen from (\ref{iii}), the concept of inner $\alpha$-thinness of a set at infinity thus defined coincides with that introduced by Kurokawa and Mizuta \cite{KM}. If $\alpha=2$ while a set in question is Borel, then this concept also coincides with that of {\it outer} thinness of a set at infinity, introduced by Doob \cite[pp.~175--176]{Doob}.
\end{remark}

Assume that the inner equilibrium measure $\gamma_A$ exists (or equivalently that $A$ is inner $\alpha$-thin at infinity). By use of properties of the Kelvin transformation (see \cite[Section~IV.5.19]{L}), we derive from Theorem~\ref{th-eq} and the facts reviewed in Sections~\ref{some}, \ref{sec-reg} that $\gamma_A$ is $C$-absolutely continuous, supported by $\overline{A}$, and having the properties\footnote{However, the equality $U^{\gamma_A}=1$ n.e.\ on $A$ does not determine $\gamma_A$ uniquely (unless, of course, the set $A$ is closed, see \cite[p.~178, Remark]{L}); compare with Theorem~\ref{th-eq''}.}
\begin{align}
\label{eqne} &U^{\gamma_A}=1\quad\text{on $A^r$ (hence, n.e.\ on $A$)},\\
\label{ineqne} &0<U^{\gamma_A}\leqslant1\quad\text{on $\mathbb R^n$}.
\end{align}
The same $\gamma_A$ can be uniquely determined by either of the two limit relations
\begin{align}
  \gamma_K&\to\gamma_A\quad\text{vaguely in $\mathfrak M^+$ as $K\uparrow A$},\label{cv1}\\
  U^{\gamma_K}&\uparrow U^{\gamma_A}\quad\text{pointwise on $\mathbb R^n$ as $K\uparrow A$},\label{cv2}
\end{align}
where $\gamma_K$ denotes the only measure in $\mathcal E^+(K)$ with $U^{\gamma_K}=1$ n.e.\ on $K$~--- namely, the (classical) equilibrium measure on $K$ \cite[Section~II.1.3]{L}, normalized by
\begin{equation}\label{k}
\gamma_K(\mathbb R^n)=\|\gamma_K\|^2=c(K).
\end{equation}
(For the alternative characterizations (\ref{cv1}) and (\ref{cv2}) of $\gamma_A$, see \cite[Lemma~5.3]{Z-bal}.)

If moreover $c_*(A)<\infty$, then (and only then) the above $\gamma_A$ is of finite energy,\footnote{In fact, the "if" part of this claim is obtained from (\ref{cv1}) and (\ref{k}) by use of \cite[Eq.~(1.4.5)]{L} (the principle of descent), whereas the "only if" part follows from the chain of inequalities
\[c(K)=\int U^{\gamma_A}\,d\gamma_K=\int U^{\gamma_K}\,d\gamma_A\leqslant\int U^{\gamma_A}\,d\gamma_A=I(\gamma_A)<\infty\quad\text{for all $K\in\mathfrak C_A$},\]
where the first equality is implied by (\ref{eqne}) and (\ref{k}), while the first inequality~--- by (\ref{cv2}).}
and it can alternatively be found as the only measure in $\Gamma_A\cap\mathcal E$ of minimum energy:
\begin{equation}\label{cv4'}\|\gamma_A\|^2=\min_{\mu\in\Gamma_A\cap\mathcal E}\,\|\mu\|^2\end{equation}
(see \cite[Theorems~6.1, 9.2]{Z-arx-22}). Furthermore, then, along with (\ref{cv1}) and (\ref{cv2}),
\begin{equation*}
\gamma_K\to\gamma_A\quad\text{strongly in $\mathcal E^+$ as $K\uparrow A$}
\end{equation*}
(see \cite[Theorem~8.1]{Z-arx-22}), and therefore
\[\gamma_A\in\mathcal E'(A).\]

On account of \cite[Theorem~7.2(c)]{Z-arx-22}, we are thus led to the following conclusion.

\begin{theorem}\label{th-eq''}Assume $c_*(A)<\infty$ and $(\mathcal P)$ holds. Then the inner equilibrium measure $\gamma_A$, uniquely determined by either of {\rm(\ref{G'})} or {\rm(\ref{cv4'})}, belongs to $\mathcal E^+(A)$:
\begin{equation}\label{eqfin}
\gamma_A\in\mathcal E^+(A),
\end{equation}
and it is characterized as the only measure in $\mathcal E^+(A)$ with $U^{\gamma_A}=1$ n.e.\ on $A$.\end{theorem}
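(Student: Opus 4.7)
The plan is to split the statement into two halves — the inclusion $\gamma_A\in\mathcal E^+(A)$ and the uniqueness within this cone — and to reduce each to results already assembled in the excerpt, with $(\mathcal P)$ entering through identity (\ref{ee}).

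For the inclusion I would simply chain the facts established in the paragraph immediately preceding the statement. Under $c_*(A)<\infty$, the text has produced the strong convergence $\gamma_K\to\gamma_A$ in $\mathcal E^+$ as $K\uparrow A$ and concluded $\gamma_A\in\mathcal E'(A)$. Invoking $(\mathcal P)$ in the form of (\ref{ee}), namely $\mathcal E'(A)=\mathcal E^+(A)$, I obtain $\gamma_A\in\mathcal E^+(A)$. That $\gamma_A$ actually realizes the asserted potential identity follows from (\ref{eqne}), which yields $U^{\gamma_A}=1$ n.e.\ on $A$.

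For the uniqueness part I would mimic the concluding step in the proof of Theorem~\ref{l-oo'}. Suppose $\mu_0\in\mathcal E^+(A)$ satisfies $U^{\mu_0}=1$ n.e.\ on $A$. Combining this with $U^{\gamma_A}=1$ n.e.\ on $A$ via the strengthened countable subadditivity of inner capacity (Lemma~\ref{str-sub}) to merge the two exceptional sets, I get $U^{\mu_0}=U^{\gamma_A}$ n.e.\ on $A$. Because $\gamma_A$ has finite energy, Theorem~\ref{th-bal1}(ii$_3$) applied with $\sigma:=\gamma_A\in\mathcal E^+$ asserts that within $\mathcal E'(A)$ there is a \emph{unique} measure whose potential agrees with $U^{\gamma_A}$ n.e.\ on $A$, namely the balayage $(\gamma_A)^A$. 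But $\gamma_A$ itself lies in $\mathcal E'(A)$ and evidently satisfies this requirement, so $(\gamma_A)^A=\gamma_A$; since $\mu_0\in\mathcal E^+(A)=\mathcal E'(A)$ by $(\mathcal P)$, the same uniqueness forces $\mu_0=\gamma_A$.

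The only real subtlety is that Theorem~\ref{th-bal1}(ii$_3$) delivers uniqueness only inside the \emph{larger} cone $\mathcal E'(A)$, whereas the hypothesis places the competing measure $\mu_0$ a priori just in $\mathcal E^+(A)$; the gap is closed precisely by identity (\ref{ee}) coming from $(\mathcal P)$. Apart from this single observation, the argument is a routine concatenation of the balayage characterizations of Section~\ref{some} with the preliminary facts on $\gamma_A$ collected just before the statement, and no further computation is needed.
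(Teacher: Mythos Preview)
Your proof is correct and follows essentially the same path as the paper: the inclusion $\gamma_A\in\mathcal E^+(A)$ is obtained exactly as you describe, via $\gamma_A\in\mathcal E'(A)$ together with $(\mathcal P)$ in the form (\ref{ee}), and this is precisely what the paragraph preceding the statement sets up. For the uniqueness part the paper simply cites \cite[Theorem~7.2(c)]{Z-arx-22}, whereas you supply a direct argument modelled on the closing step of Theorem~\ref{l-oo'} (combining Lemma~\ref{str-sub} with Theorem~\ref{th-bal1}(ii$_3$) applied to $\sigma:=\gamma_A$); this is a valid and self-contained replacement for the external reference, not a different strategy.
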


\begin{corollary}\label{cor-eq}
 Under the assumptions of Theorem~{\rm\ref{th-eq''}},
 \begin{equation}\label{eqfin'}(\gamma_A)^A=\gamma_A.\end{equation}
 \end{corollary}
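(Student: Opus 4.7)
The plan is to use the characterization of $\gamma_A$ furnished by Theorem~\ref{th-eq''} together with the characterization of inner balayage within $\mathcal E^+$ via its potential.

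First, I would note that under the hypotheses of Theorem~\ref{th-eq''}, we have $\gamma_A\in\mathcal E^+(A)$, so in particular $\gamma_A\in\mathcal E^+$ has finite energy. This allows us to apply the machinery of Theorem~\ref{th-bal1} (equivalently, of Theorem~\ref{l-oo'}, since $(\mathcal P)$ is in force and hence $\mathcal E'(A)=\mathcal E^+(A)$ by Remark~\ref{r-h}). Thus the inner balayage $(\gamma_A)^A$ belongs to $\mathcal E^+(A)$ and is the unique element of $\mathcal E^+(A)$ satisfying
\[
U^{(\gamma_A)^A}=U^{\gamma_A}\quad\text{n.e.\ on $A$}.
\]

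Next, since $c_*(A)<\infty$, the equilibrium measure $\gamma_A$ satisfies $U^{\gamma_A}=1$ n.e.\ on $A$ by (\ref{eqne}). Combining this with the displayed identity, the inner balayage $(\gamma_A)^A$ is a measure in $\mathcal E^+(A)$ whose potential equals $1$ n.e.\ on $A$. By the uniqueness assertion of Theorem~\ref{th-eq''}, the only measure in $\mathcal E^+(A)$ with that property is $\gamma_A$ itself, so $(\gamma_A)^A=\gamma_A$, which is (\ref{eqfin'}).

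There is no real obstacle here: the corollary is essentially a bookkeeping consequence of the two uniqueness statements (one characterizing the inner balayage of a finite-energy measure by its n.e.\ potential on $A$ within $\mathcal E^+(A)$, the other characterizing the finite-energy equilibrium measure by the condition $U^{\gamma_A}=1$ n.e.\ on $A$ within $\mathcal E^+(A)$). The only point that requires care is ensuring that both uniqueness statements take place within the same class $\mathcal E^+(A)$, which is exactly why property $(\mathcal P)$ is assumed.
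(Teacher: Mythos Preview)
Your argument is correct. The paper's own proof is a touch more direct: it invokes Theorem~\ref{th-bal1}(i$_3$) (with $\mathcal E'(A)=\mathcal E^+(A)$ by $(\mathcal P)$) to view $(\gamma_A)^A$ as the orthogonal projection of $\gamma_A$ onto $\mathcal E^+(A)$, and since $\gamma_A\in\mathcal E^+(A)$ by (\ref{eqfin}), the projection is $\gamma_A$ itself. Your route via (ii$_3$) is equally valid; note, though, that the detour through the equilibrium-measure uniqueness of Theorem~\ref{th-eq''} is unnecessary even within your own framework: once you know $(\gamma_A)^A$ is the \emph{unique} $\mu\in\mathcal E^+(A)$ with $U^\mu=U^{\gamma_A}$ n.e.\ on $A$, the measure $\gamma_A$ itself trivially satisfies this, so $(\gamma_A)^A=\gamma_A$ without ever passing to the value~$1$.
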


\begin{proof} Substituting (\ref{ee}) into Theorem~\ref{th-bal1}(i$_3$) with $\sigma:=\gamma_A$ shows that $(\gamma_A)^A$ is actually the orthogonal projection of $\gamma_A$ in the pre-Hil\-bert space $\mathcal E$ onto the convex, strongly complete cone $\mathcal E^+(A)$. As $\gamma_A\in\mathcal E^+(A)$ by (\ref{eqfin}), (\ref{eqfin'}) follows.
\end{proof}

\subsection{Some basic facts about inner $\alpha$-harmonic measure}\label{sec-harm}
Being a natural generalization of the classical concept of ($2$-)har\-mo\-nic measure, the inner $\alpha$-har\-mon\-ic measure $\varepsilon_y^A$ serves as the main tool in solving the generalized Dirichlet problem for $\alpha$-har\-mon\-ic functions, cf.\ \cite{BH,KB',L}. Besides, due to the formula (see \cite[Theorem~5.1]{Z-bal2})
\[\zeta^A=\int\varepsilon_y^A\,d\zeta(y),\]
it is useful in the study of the inner balayage $\zeta^A$ for arbitrary $\zeta\in\mathfrak M^+$, see~\cite{Z-bal2}.

The following theorem on the total mass of the inner $\alpha$-harmonic measure $\varepsilon_y^A$, $y\in\mathbb R^n$ and $A\subset\mathbb R^n$ being arbitrary, has already been utilized in Section~\ref{sec-main} above.

\begin{theorem}[{\rm see \cite[Theorem~2.2]{Z-bal2}}]\label{harm-tot}
 For any $y\in\mathbb R^n$ and any $A\subset\mathbb R^n$,\footnote{Regarding the latter relation in (\ref{TM}), see Theorem~\ref{th-eq}(vi$_5$).}
 \begin{equation}\label{TM}\varepsilon_y^A(\mathbb R^n)=\left\{
\begin{array}{cl}U^{\gamma_A}(y)&\text{if $A$ is inner $\alpha$-thin at infinity},\\
1&\text{otherwise},\\ \end{array} \right.
\end{equation}
$\gamma_A$ being the inner equilibrium measure of $A$ {\rm(see Section~\ref{sec-eq})}.
\end{theorem}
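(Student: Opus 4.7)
The plan is to establish Theorem~\ref{harm-tot} by compact exhaustion, combining the extended symmetry of balayage with the equivalent characterizations of inner $\alpha$-thinness at infinity supplied by Theorem~\ref{th-eq}. First I would handle a compact $K\subseteq A$ with $c(K)>0$. The equilibrium measure $\gamma_K\in\mathcal E^+(K)$ satisfies $U^{\gamma_K}=1$ n.e.\ on $K$ and $U^{\gamma_K}\leqslant 1$ on $\mathbb R^n$ by Frostman's maximum principle, while the balayage $\varepsilon_y^K$ also lies in $\mathcal E^+(K)$ (Remark~\ref{r-h}, using $\varepsilon_y\in\mathcal E^+$) and hence does not charge inner-capacity-zero subsets of $K$. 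Applying the extended symmetry (\ref{alt}) with $\zeta:=\varepsilon_y$, $\mu:=\gamma_K$, and noting $\gamma_K^K=\gamma_K$ (Theorem~\ref{th-bal1}(ii$_3$) for $\sigma:=\gamma_K$, since $\gamma_K\in\mathcal E^+(K)$ already has the defining potential on $K$), one obtains
\[\varepsilon_y^K(\mathbb R^n)=\int U^{\gamma_K}\,d\varepsilon_y^K=I(\varepsilon_y^K,\gamma_K)=I(\varepsilon_y,\gamma_K)=U^{\gamma_K}(y).\]

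Next I would pass to the limit $K\uparrow A$. By (e) of Section~\ref{some}, $\varepsilon_y^K\to\varepsilon_y^A$ vaguely and $U^{\varepsilon_y^K}\uparrow U^{\varepsilon_y^A}$ pointwise on $\mathbb R^n$, so the scalar net $\varepsilon_y^K(\mathbb R^n)=U^{\gamma_K}(y)$ is monotone nondecreasing and bounded above by $1$ (property (b)). To equate $\lim_K\varepsilon_y^K(\mathbb R^n)$ with $\varepsilon_y^A(\mathbb R^n)$, I would apply the asymptotic identity $\mu(\mathbb R^n)=\lim_{|x|\to\infty}|x|^{n-\alpha}U^\mu(x)$, valid for any bounded $\mu\in\mathfrak M^+$ via a splitting of the integral together with dominated convergence, to both $\varepsilon_y^A$ and each $\varepsilon_y^K$. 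The interchange of $\sup_K$ with $\lim_{|x|\to\infty}$ is legitimized by the pointwise monotone convergence $U^{\varepsilon_y^K}\uparrow U^{\varepsilon_y^A}$ together with the uniform domination $U^{\varepsilon_y^K}\leqslant U^{\varepsilon_y}$ (property (a)), which yields $\varepsilon_y^A(\mathbb R^n)=\sup_K U^{\gamma_K}(y)$.

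It remains to identify this supremum. If $A$ is inner $\alpha$-thin at infinity, the limit relation (\ref{cv2}) directly gives $\sup_K U^{\gamma_K}(y)=U^{\gamma_A}(y)$, which is the first branch of (\ref{TM}). Otherwise, Frostman's maximum principle already gives $U^{\gamma_K}(y)\leqslant 1$ for every $K$, while the reverse inequality $\sup_K U^{\gamma_K}(y)\geqslant 1$ follows from translating the failure of (v$_5$) in Theorem~\ref{th-eq} into the statement that $y$ is inner $\alpha$-regular for the inversion $A_y^*$; transporting this regularity back through the Kelvin transformation via (\ref{har-eq}) shows that $U^{\gamma_K}(y)\uparrow 1$ along the exhaustion. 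The main obstacle lies precisely in this last step: without a limit measure $\gamma_A$ available, one must argue $\sup_K U^{\gamma_K}(y)=1$ by careful bookkeeping under inversion, exchanging the roles of infinity and the point $y$ and invoking the classical fact that equilibrium potentials attain their maximal value at inner regular boundary points.
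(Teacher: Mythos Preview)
The paper does not prove this theorem; it simply cites \cite[Theorem~2.2]{Z-bal2}. So there is no ``paper's proof'' to compare against, and I assess your proposal on its own merits using the tools the paper makes available.

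Your overall strategy---establish $\varepsilon_y^K(\mathbb R^n)=U^{\gamma_K}(y)$ for compact $K$ via the symmetry relation, then exhaust---is correct, but two points need repair.

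\textbf{The claim $\varepsilon_y\in\mathcal E^+$ is false.} A Dirac measure has $I(\varepsilon_y)=\kappa_\alpha(y,y)=+\infty$, so Remark~\ref{r-h} does not give $\varepsilon_y^K\in\mathcal E^+(K)$, and your stated reason for $\varepsilon_y^K$ not charging inner-capacity-zero sets collapses. The conclusion $\int U^{\gamma_K}\,d\varepsilon_y^K=\varepsilon_y^K(\mathbb R^n)$ is still true, but one must argue by cases: if $y\notin K^r$ then $\varepsilon_y^K$ is $C$-absolutely continuous (apply the equivalence (iv$_5$)$\Leftrightarrow$(v$_5$) of Theorem~\ref{th-eq} with $A$ and $A_y^*$ interchanged), hence ignores the exceptional set; if $y\in K^r$ then $\varepsilon_y^K=\varepsilon_y$ and $U^{\gamma_K}(y)=1$, so the identity is trivial.

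\textbf{The limit step and the non-thin case are handled more cleanly without your asymptotic.} The formula $\mu(\mathbb R^n)=\lim_{|x|\to\infty}|x|^{n-\alpha}U^\mu(x)$ is delicate for measures of noncompact support, and monotone pointwise convergence plus a uniform majorant do not by themselves justify swapping $\sup_K$ with $\lim_{|x|\to\infty}$. A direct route: property~(c) gives $\varepsilon_y^K=(\varepsilon_y^A)^K$, whence $\varepsilon_y^K(\mathbb R^n)\leqslant\varepsilon_y^A(\mathbb R^n)$ by~(b); conversely, vague lower semicontinuity of total mass together with $\varepsilon_y^K\to\varepsilon_y^A$ vaguely (property~(e)) yields $\varepsilon_y^A(\mathbb R^n)\leqslant\liminf_K\varepsilon_y^K(\mathbb R^n)$. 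Since the net is increasing, $\varepsilon_y^A(\mathbb R^n)=\sup_K U^{\gamma_K}(y)$. When $A$ is inner $\alpha$-thin at infinity, (\ref{cv2}) finishes the argument as you say. When $A$ is not, your inversion bookkeeping is unnecessary: by Theorem~\ref{th-eq}(vi$_5$)---exactly as the footnote to the statement signals---failure of thinness means no $\chi\in\mathfrak M^+$ satisfies $\chi^A(\mathbb R^n)<\chi(\mathbb R^n)$, so in particular $\varepsilon_y^A(\mathbb R^n)=\varepsilon_y(\mathbb R^n)=1$.
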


Fix $y\in\mathbb R^n$. As seen from Theorem~\ref{th-eq} with $A$ and $A_y^*$ interchanged, $\varepsilon_y^A$ is $C$-ab\-s\-ol\-ut\-ely continuous if and only if $A_y^*$ is inner $\alpha$-thin at infinity, or equivalently $y\notin A^r$, cf.\ \cite[Corollary~2.1]{Z-bal2}. We shall, however, mainly be concerned with $y\notin A^r$ such that
the inner $\alpha$-harmonic measure $\varepsilon_y^A$ is of finite energy:
\begin{equation*}
\varepsilon_y^A\in\mathcal E^+.
\end{equation*}
Noting from Section~\ref{some}, see (a) and (b), that this necessarily holds true if $y\notin\overline{A}$ (see footnote~\ref{f-proof} for details), we introduce the following (new) concept.

\begin{definition}\label{def-ultra}
Given $A\subset\mathbb R^n$, a point $y\in\overline{A}$ is said to be inner {\it $\alpha$-ultrairregular} for $A$ if $\varepsilon_y^A\in\mathcal E^+$. Let $A^u$ stand for the set of all those $y$; then certainly
\[A^u\subset A^i\subset\partial A.\]
\end{definition}

\begin{theorem}\label{th-eq'} Given $A\subset\mathbb R^n$ and $y\in\overline{A}$, the following {\rm(i$_6$)}--{\rm(iv$_6$)} are equivalent.
\begin{itemize}
  \item[{\rm(i$_6$)}] $y$ is inner $\alpha$-ultrairregular for $A$.
  \item[{\rm(ii$_6$)}] If $A_j:=A\cap\{x\in\mathbb R^n:\ q^{j+1}\leqslant|x-y|<q^j\}$, where $q\in(0,1)$, then
      \begin{equation}\label{iii'}\sum_{j\in\mathbb N}\,\frac{c_*(A_j)}{q^{2j(n-\alpha)}}<\infty.\end{equation}
\item[{\rm(iii$_6$)}] $A^*_y$, the inverse of $A$ with respect to the sphere $S_{y,1}$, is of finite inner capacity:
\[c_*(A^*_y)<\infty.\]
\item[{\rm(iv$_6$)}] $A^*_y$ is inner $\alpha$-ultrathin at infinity {\rm (cf.\ \cite[Definition~2.2]{Z-bal2})}.
\end{itemize}

Furthermore, if any one of these {\rm(i$_6$)}--{\rm(iv$_6$)} is fulfilled, then the inner $\alpha$-harmonic measure $\varepsilon_y^A$ is representable in the form
\[\varepsilon_y^A=(\gamma_{A^*_y})^*,\]
$(\gamma_{A^*_y})^*$ being the Kelvin transform of the inner equilibrium measure $\gamma_{A^*_y}\in\mathcal E^+$ with respect to the sphere $S_{y,1}$.
\end{theorem}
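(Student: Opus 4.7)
The plan is to apply the Kelvin transformation with respect to the sphere $S_{y,1}$ so as to reduce every claim in the theorem to a corresponding statement about the inverted set $A^*_y$ near infinity, where the machinery of Theorem~\ref{th-eq} and Section~\ref{sec-eq} is directly at our disposal (using $(A^*_y)^*_y=A$). First I would verify that each of (i$_6$)--(iv$_6$) forces $A^*_y$ to be inner $\alpha$-thin at infinity, so that $\gamma_{A^*_y}$ exists. For (i$_6$), $\varepsilon_y^A\in\mathcal E^+$ makes $\varepsilon_y^A$ $C$-absolutely continuous (footnote~\ref{f-C}), and Theorem~\ref{th-eq}(v$_5$) applied to $A^*_y$ in place of $A$ yields the thinness. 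For (iii$_6$), $c_*(A^*_y)<\infty$ implies the Wiener criterion Theorem~\ref{th-eq}(iii$_5$) for $A^*_y$, by the trivial bound $c_*(B_j)\le c_*(A^*_y)$ on every annular slice $B_j\subset A^*_y$ together with $\sum_j q^{-j(n-\alpha)}<\infty$ ($q>1$); the cases (ii$_6$) and (iv$_6$) are handled along the same lines. The identity
\[
\varepsilon_y^A\;=\;\varepsilon_y^{(A^*_y)^*_y}\;=\;(\gamma_{A^*_y})^*
\]
is then an immediate consequence of~(\ref{har-eq}).

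The central analytic input for (i$_6$) $\Leftrightarrow$ (iii$_6$) is the energy-preservation of the Kelvin transformation, $I(\mu^*)=I(\mu)$ for any $\mu\in\mathfrak M^+$ with $\mu(\{y\})=0$, which is a direct computation based on the kernel identity
\[
|K_y(x)-K_y(z)|^{\alpha-n}\;=\;|x-y|^{n-\alpha}\,|z-y|^{n-\alpha}\,|x-z|^{\alpha-n}
\]
($K_y$ denoting inversion in $S_{y,1}$) combined with the transformation rule $d\mu^*(K_y(z))=|z-y|^{-(n-\alpha)}\,d\mu(z)$. Applied to $\varepsilon_y^A=(\gamma_{A^*_y})^*$ above, and coupled with the fact from Section~\ref{sec-eq} that $\gamma_{A^*_y}\in\mathcal E^+\iff c_*(A^*_y)<\infty$, this delivers the desired equivalence.

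For the horizontal equivalences: (iii$_6$) $\Leftrightarrow$ (iv$_6$) reduces to unfolding \cite[Definition~2.2]{Z-bal2}, under which inner $\alpha$-ultrathinness of $A^*_y$ at infinity is characterized by $c_*(A^*_y)<\infty$. For (ii$_6$) $\Leftrightarrow$ (iii$_6$), I would note that the annular slice $A_j:=A\cap\{q^{j+1}\le|x-y|<q^j\}$ ($q\in(0,1)$) inverts to a slice of $A^*_y$ localized in $\{q^{-j}<|x-y|\le q^{-(j+1)}\}$, and that inner $\alpha$-Riesz capacity of a set localized in such an annulus scales by the factor $q^{-2j(n-\alpha)}$ under the inversion, whence $c_*(A_j)\asymp q^{2j(n-\alpha)}\,c_*((A_j)^*_y)$. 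The series~(\ref{iii'}) is thereby transformed into $\sum_j c_*((A_j)^*_y)<\infty$, which by countable subadditivity (Lemma~\ref{str-sub}) on one side, and by a geometric-separation argument on the other (using positive definiteness of the Riesz kernel to bound the off-diagonal mutual energies between equilibrium measures of compact approximants to distinct slices), is comparable with $c_*(A^*_y)<\infty$.

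The main obstacle will be the quantitative capacity scaling under Kelvin inversion used in the last step. I would establish it first for a compact $K\subset A_j$ by directly Kelvin-transforming $\gamma_K$ and exploiting the potential identity $U^{\gamma_K^*}(K_y(x))=|x-y|^{n-\alpha}\,U^{\gamma_K}(x)$ to relate $\gamma_K$ and $\gamma_{K^*_y}$ up to normalization, using the energy-preservation identity above; then the estimate is promoted to $c_*$ via the convergence assertions~(\ref{cv1})--(\ref{cv2}) on letting $K\uparrow A_j$. A parallel construction, $\mu:=\sum_j\gamma_{K_j}$ with compact $K_j\subset(A_j)^*_y$ of capacity close to $c_*((A_j)^*_y)$, supplies the lower bound $c_*(A^*_y)\gtrsim\sum_j c_*((A_j)^*_y)$ needed for the geometric-separation half of the comparison above.
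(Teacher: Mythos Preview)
Your approach is correct and, at its core, identical to the paper's: both invoke the identity $\varepsilon_y^A=\varepsilon_y^{(A^*_y)^*_y}=(\gamma_{A^*_y})^*$ from~(\ref{har-eq}) (with $A$ and $A^*_y$ interchanged) and then reduce each of (i$_6$)--(iv$_6$) to a condition on $A^*_y$ at infinity. The difference is purely one of packaging. The paper's proof is a two-line citation: once the Kelvin identity is in place, the equivalence of $I(\gamma_{A^*_y})<\infty$, $c_*(A^*_y)<\infty$, the annular series, and inner $\alpha$-ultrathinness of $A^*_y$ at infinity is precisely \cite[Theorem~2.3]{Z-bal2}, applied with $A$ replaced by $A^*_y$. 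You instead sketch a direct proof of that cited theorem --- energy preservation under Kelvin transformation for (i$_6$)$\Leftrightarrow$(iii$_6$), the bi-Lipschitz scaling of capacity on dyadic annuli for (ii$_6$)$\Leftrightarrow$(iii$_6$), and the definition for (iii$_6$)$\Leftrightarrow$(iv$_6$). What you gain is self-containment; what you pay is the need to actually carry out the annular capacity comparison and the geometric-separation superadditivity estimate, which are the nontrivial parts of \cite[Theorem~2.3]{Z-bal2} and which you have correctly flagged as the main work.
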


\begin{proof}
On account of Definition~\ref{def-ultra}, this follows from (\ref{har-eq}) and \cite[Theorem~2.3]{Z-bal2} with $A$ and $A_y^*$ interchanged.
\end{proof}

\begin{remark}
Comparing (\ref{iii'}) with (\ref{w}) we deduce that, in general,
\[A^u\Subset A^i.\] It is also worth noting that, if $\alpha=2$ while $A$ is Borel, then $y\in A^u$ if and only if $A^*_y$ is outer $2$-thin at infinity in the sense of Brelot \cite[p.~313]{Brelot}, cf.\ \cite[Remark~2.3]{Z-bal2}.
\end{remark}

\begin{example}\label{ex} Let $n=3$ and $\alpha=2$. Consider the rotation body
\begin{equation*}\Delta:=\bigl\{x\in\mathbb R^3: \ 0\leqslant x_1\leqslant1, \
x_2^2+x_3^2\leqslant\exp(-2x_1^{-\beta})\bigr\},\quad\beta\in(0,\infty).\end{equation*}
For any $\beta>0$, the origin $x=0$ is $2$-irregular for $\Delta$ (Lebesgue cusp), and moreover
\[x=0\text{\ is $2$-ultrairregular for $\Delta$}\iff\beta>1.\]
(Compare with \cite[Section~V.1.3, Example]{L} and \cite[Example~2.1]{Z-bal2}.)
\end{example}

\section{Proofs of the main results: preparatory lemmas}\label{sec-prep}

In the rest of this paper, we fix a set $A\subset\mathbb R^n$, $\overline{A}\ne\mathbb R^n$, satisfying (\ref{nonzero}) and $(\mathcal P)$, and $z\in A^u\cup{\overline A}^c$. Then the inner $\alpha$-harmonic measure $\varepsilon_z^A$ is clearly of finite energy:\footnote{Indeed, for $z\in A^u$, this is obvious from Definition~\ref{def-ultra}, while for $z\in{\overline A}^c$, the Euclidean distance $\text{\rm dist}(z,{\overline A})$ between $\{z\}$ and ${\overline A}$, is ${}>0$, whence
\[\int U^{\varepsilon_z^A}\,d\varepsilon_z^A\leqslant\int U^{\varepsilon_z}\,d\varepsilon_z^A\leqslant\varepsilon_z^A(\mathbb R^n)/{\rm dist}(z,{\overline A})^{n-\alpha}\leqslant1/{\rm dist}(z,{\overline A})^{n-\alpha}<\infty,\]
the first and the third inequalities being seen from Section~\ref{some}, (a) and (b), respectively.\label{f-proof}}
\begin{equation}\label{Fen}
\varepsilon_z^A\in\mathcal E^+,
\end{equation}
hence
\begin{equation}\label{integr}\int U^{\varepsilon_z}\,d\mu=\int U^{\varepsilon_z^A}\,d\mu=\langle\varepsilon_z^A,\mu\rangle<\infty\quad\text{for all $\mu\in\mathcal E^+(A)$},\end{equation}
the former equality being obtained from (\ref{ineq1}) with $\zeta:=\varepsilon_z$ by use of the fact that any $\mu$-measurable subset of $A$ of inner capacity zero is $\mu$-negligible for any $\mu\in\mathcal E^+(A)$.

Fix $q\in(0,\infty)$. In view of $(\mathcal P)$ and (\ref{Fen}), Theorem~\ref{l-oo'} with $\zeta:=q\varepsilon_z$ gives
\begin{equation}\label{FenA}
q\varepsilon_z^A\in\mathcal E^+(A).
\end{equation}
Moreover, by virtue of the same Theorem~\ref{l-oo'}, $q\varepsilon_z^A$ solves the problem of minimizing the Gauss functional $I_{f_{q,z}}(\mu)$ over $\mu\in\mathcal E^+(A)$:
\begin{equation}\label{qbal}
-q^2\|\varepsilon_z^A\|^2=I_{f_{q,z}}(q\varepsilon_z^A)=\min_{\mu\in\mathcal E^+(A)}\,I_{f_{q,z}}(\mu)=:\widehat{w}_{f_{q,z}}(A),
\end{equation}
the first equality being derived from (\ref{G}), (\ref{integr}), and (\ref{FenA}) by a direct verification.

Noting that, obviously,
\begin{equation}\label{WWin}
\widehat{w}_{f_{q,z}}(A)\leqslant w_{f_{q,z}}(A),
\end{equation}
we infer from (\ref{qbal}) that $w_{f_{q,z}}(A)>-\infty$, and hence Problem~\ref{pr} makes sense.

Lemma~\ref{l-char}, providing characteristic properties of the solution to Problem~\ref{pr}, will be useful in subsequent proofs.

\begin{lemma}\label{l-char}
For $\lambda\in\breve{\mathcal E}^+(A)$ to serve as the {\rm(}unique{\rm)} solution to Problem~{\rm\ref{pr}}, it is necessary and sufficient that either of the following two relations be fulfilled:
\begin{align}
  U^\lambda_{f_{q,z}}&\geqslant C_\lambda\quad\text{n.e.\ on $A$},\label{ch1}\\
 U^\lambda_{f_{q,z}}&=C_\lambda\quad\text{$\lambda$-a.e.\ on $\mathbb R^n$},\label{ch2}
\end{align}
where
\begin{equation}\label{const}
C_\lambda:=\int U^\lambda_{f_{q,z}}\,d\lambda\in(-\infty,\infty).
\end{equation}
\end{lemma}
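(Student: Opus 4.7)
The plan is to characterize the unique minimizer of the strictly convex Gauss functional $I_{f_{q,z}}$ on the convex set $\breve{\mathcal E}^+(A)$ via a standard first-variation argument, carefully recast in the inner-capacity framework. For any $\mu\in\breve{\mathcal E}^+(A)$ and $t\in[0,1]$, the convex combination $\lambda_t:=(1-t)\lambda+t\mu$ still belongs to $\breve{\mathcal E}^+(A)$; expanding $I(\lambda_t)=I(\lambda)+2t\,I(\lambda,\mu-\lambda)+t^2I(\mu-\lambda)$ and combining with the linearity of $\mu\mapsto\int f_{q,z}\,d\mu$, one arrives at the key identity
\[I_{f_{q,z}}(\lambda_t)-I_{f_{q,z}}(\lambda)=2t\Bigl[\int U^\lambda_{f_{q,z}}\,d\mu-C_\lambda\Bigr]+t^2I(\mu-\lambda).\]
Since $I(\mu-\lambda)\geqslant0$ by positive definiteness of $\kappa_\alpha$, this expression is nonnegative on $[0,1]$ for every $\mu\in\breve{\mathcal E}^+(A)$ if and only if
\[(\ast)\qquad\int U^\lambda_{f_{q,z}}\,d\mu\geqslant C_\lambda\qquad\text{for all }\mu\in\breve{\mathcal E}^+(A);\]
together with strict positive definiteness (which upgrades the inequality to a strict one as soon as $\mu\ne\lambda$), $(\ast)$ is thus necessary and sufficient for $\lambda$ to be the unique minimizer.

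The main technical step is to show that $(\ast)$ is equivalent to the pointwise inequality (\ref{ch1}). The easy direction (\ref{ch1})$\Rightarrow(\ast)$ uses the property, invoked throughout the paper (see \cite[Lemma~2.5]{Z-arx-22}), that any $\mu\in\mathcal E^+(A)$ assigns zero mass to subsets of $A$ of inner capacity zero, so integrating (\ref{ch1}) against the probability $\mu$ yields $(\ast)$. For the converse I argue contrapositively: if (\ref{ch1}) fails, the set $E:=\{x\in A:U^\lambda_{f_{q,z}}(x)<C_\lambda\}$ has $c_*(E)>0$. Writing $E=\bigcup_n E_n$ with $E_n:=\{x\in A:U^\lambda_{f_{q,z}}(x)\leqslant C_\lambda-1/n\}$ and noting that $U^\lambda_{f_{q,z}}=U^\lambda-qU^{\varepsilon_z}$ is Borel wherever it is finite (a condition that excludes only a set of capacity zero), Lemma~\ref{str-sub} produces an $n$ with $c_*(E_n)>0$, hence a compact $K\subset E_n$ with $c(K)>0$. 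The normalized classical equilibrium measure $\tilde\mu:=\gamma_K/c(K)\in\breve{\mathcal E}^+(K)\subset\breve{\mathcal E}^+(A)$ is concentrated on $K\subset E_n$, whence $\int U^\lambda_{f_{q,z}}\,d\tilde\mu\leqslant C_\lambda-1/n<C_\lambda$, contradicting $(\ast)$.

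The equivalence with (\ref{ch2}) is then immediate: since $\lambda\in\mathcal E^+(A)$, the same $\lambda$-negligibility property applied to (\ref{ch1}) gives $U^\lambda_{f_{q,z}}\geqslant C_\lambda$ $\lambda$-a.e.\ on $\mathbb R^n$, and combined with $\int U^\lambda_{f_{q,z}}\,d\lambda=C_\lambda=C_\lambda\cdot\lambda(\mathbb R^n)$ the $\lambda$-a.e.\ lower bound must coincide with its own $\lambda$-mean, forcing (\ref{ch2}). I expect the main obstacle to lie in the second paragraph: one must check the universal measurability of the bad set $E$ and invoke the strengthened countable subadditivity of inner capacity to produce a compact subset of positive capacity supporting a test probability. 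Once this machinery is in place, the rest is a routine expansion of the quadratic $t\mapsto I_{f_{q,z}}(\lambda_t)$ together with the standard fact that inner-capacity-zero subsets of $A$ are $\lambda$-negligible.
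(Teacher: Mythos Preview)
The paper does not supply an argument here; it simply cites \cite{Z5a} (Theorem~1 and Proposition~1). Your first-variation proof is the standard one and is correct as far as the characterization via (\ref{ch1}) goes: the quadratic expansion of $t\mapsto I_{f_{q,z}}(\lambda_t)$ is accurate, the equivalence of minimality with the integral inequality $(\ast)$ is clean, and the passage from $(\ast)$ to the pointwise inequality (\ref{ch1}) via Lemma~\ref{str-sub} and a compact subset of positive capacity is carried out carefully (the sets $E_n$ are indeed Borel in $A$, since $U^\lambda$ and $U^{\varepsilon_z}$ are both l.s.c., so the strengthened subadditivity applies).

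There is, however, a genuine gap in your treatment of (\ref{ch2}). You write ``the equivalence with (\ref{ch2}) is then immediate'' but establish only the implication (\ref{ch1})$\Rightarrow$(\ref{ch2}); you never show that (\ref{ch2}) alone forces $\lambda$ to be the minimizer. In fact that implication is \emph{false} as stated: take $n=3$, $\alpha=2$, $z=0$, and $A=\{|x|=1\}\cup\{|x|=2\}$. The normalized surface measure $\lambda$ on $\{|x|=2\}$ lies in $\breve{\mathcal E}^+(A)$ and, by Newton's theorem, has $U^\lambda_{f_{q,0}}\equiv\tfrac12-\tfrac{q}{2}=C_\lambda$ on $\{|x|=2\}$, hence $\lambda$-a.e.; yet on $\{|x|=1\}$ one computes $U^\lambda_{f_{q,0}}=\tfrac12-q<C_\lambda$, so (\ref{ch1}) fails and $\lambda$ is \emph{not} the minimizer over $\breve{\mathcal E}^+(A)$. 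Thus (\ref{ch2}) by itself cannot be sufficient, and your claimed ``equivalence'' is too strong. What survives, and what the paper actually uses (see (\ref{e-char}) for sufficiency and the proof of Theorem~\ref{th2}(i$_1$) for necessity of (\ref{ch2})), is exactly what you do prove: (\ref{ch1}) is necessary and sufficient, and (\ref{ch2}) is a consequence of (\ref{ch1}). You should reword the final paragraph accordingly rather than assert an equivalence you have not (and cannot) establish.
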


\begin{proof}
  See \cite{Z5a} (Theorem~1 and Proposition~1).
\end{proof}

\subsection{Extremal measures}We call a net $(\mu_s)_{s\in S}\subset\breve{\mathcal E}^+(A)$ {\it minimizing} if
\begin{equation}\label{min}
\lim_{s\in S}\,I_{f_{q,z}}(\mu_s)=w_{f_{q,z}}(A);
\end{equation}
let $\mathbb M_{f_{q,z}}(A)$ stand for the (nonempty) set of all those $(\mu_s)_{s\in S}$.

\begin{lemma}\label{l-ext00}There exists the unique $\xi_{A,f_{q,z}}\in\mathcal E^+$ such that, for every minimizing net $(\mu_s)_{s\in S}\in\mathbb M_{f_{q,z}}(A)$,
\begin{equation}\label{ext}
 \mu_s\to\xi_{A,f_{q,z}}\quad\text{strongly and vaguely in $\mathcal E^+$}.
\end{equation}
This $\xi_{A,f_{q,z}}$ is referred to as the extremal measure {\rm(}in Problem~{\rm\ref{pr}}{\rm)}.
\end{lemma}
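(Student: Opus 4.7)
The strategy is the classical convexity argument in a pre-Hilbert space, relying crucially on the parallelogram identity in $\mathcal{E}$, the strict positive definiteness of $\kappa_\alpha$, the strong completeness of $\mathcal{E}^+$, and the perfectness of the Riesz kernel (all recalled in Section~\ref{sec-alpha}). The main input specific to this setting is that the linear term in $I_{f_{q,z}}$ drops out when the parallelogram identity is applied, leaving a quadratic estimate that forces strong Cauchy behavior of every minimizing net.

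\medskip

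First, fix $(\mu_s)_{s\in S}\in\mathbb M_{f_{q,z}}(A)$ and observe that by convexity of $\breve{\mathcal E}^+(A)$, the midpoint $(\mu_s+\mu_t)/2$ again lies in $\breve{\mathcal E}^+(A)$ for all $s,t\in S$, and therefore
\[I_{f_{q,z}}\bigl((\mu_s+\mu_t)/2\bigr)\geqslant w_{f_{q,z}}(A).\]
Exploiting that the external-field term $-2q\int U^{\varepsilon_z}\,d\mu$ is linear in $\mu$, a direct expansion using (\ref{G}) together with the parallelogram identity yields
\[\|\mu_s-\mu_t\|^2=2\bigl[I_{f_{q,z}}(\mu_s)+I_{f_{q,z}}(\mu_t)-2\,I_{f_{q,z}}\bigl((\mu_s+\mu_t)/2\bigr)\bigr]\leqslant 2\bigl[I_{f_{q,z}}(\mu_s)+I_{f_{q,z}}(\mu_t)-2w_{f_{q,z}}(A)\bigr],\]
and the right-hand side tends to $0$ along $S\times S$ by (\ref{min}). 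Hence $(\mu_s)_{s\in S}$ is a strong Cauchy net in $\mathcal E^+$.

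\medskip

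Second, since $(\mathcal P)$ holds, $\mathcal E^+(A)=\mathcal E'(A)$ is strongly closed in the strongly complete cone $\mathcal E^+$, hence itself strongly complete; thus $(\mu_s)$ converges strongly to some $\xi\in\mathcal E^+(A)\subset\mathcal E^+$. The perfectness of the Riesz kernel (Deny) then guarantees that the same net converges vaguely to the same $\xi$, establishing (\ref{ext}) for this particular minimizing net.

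\medskip

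For uniqueness of the limit, let $(\mu_s)_{s\in S}$ and $(\nu_t)_{t\in T}$ be two minimizing nets with strong/vague limits $\xi_1$ and $\xi_2$. Combine them into a single minimizing net indexed by $S\sqcup T$ (say, by declaring every element of $S$ to precede every element of $T$ cofinally, or more symmetrically by alternating between the two directed sets); applying the Cauchy estimate above to this combined net shows that it, too, is strong Cauchy, so its strong limit is unique. As both $\xi_1$ and $\xi_2$ are subnet limits of this combined net and the strong topology on $\mathcal E$ is Hausdorff, $\xi_1=\xi_2$. Setting $\xi_{A,f_{q,z}}$ equal to this common value completes the proof. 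The only delicate point is the Cauchy estimate in the first step — everything else is an application of already-established machinery — but this estimate is standard once one notices the cancellation of the linear term $-2qU^{\mu}(z)$ upon symmetrization.
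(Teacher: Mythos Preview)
Your proof is correct and follows essentially the same approach the paper sketches: convexity of $\breve{\mathcal E}^+(A)$, the parallelogram identity in $\mathcal E$, strict positive definiteness of the Riesz kernel, finiteness of $w_{f_{q,z}}(A)$, and the perfectness of $\kappa_\alpha$. One small point: your ``combined net'' on $S\sqcup T$ with every element of $S$ preceding every element of $T$ is directed but has $T$ cofinal, so its Cauchy condition does not actually compare $\mu_s$ against $\nu_t$; it is cleaner (and avoids any net-combination device) to apply your parallelogram estimate directly to the mixed pair $\mu_s,\nu_t\in\breve{\mathcal E}^+(A)$, obtaining $\|\mu_s-\nu_t\|^2\leqslant 2\bigl[I_{f_{q,z}}(\mu_s)+I_{f_{q,z}}(\nu_t)-2w_{f_{q,z}}(A)\bigr]\to0$, whence $\|\xi_1-\xi_2\|=0$ and $\xi_1=\xi_2$ by strict positive definiteness.
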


\begin{proof}
This follows in a manner similar to that in \cite[Lemma~4.2]{Z-Rarx}, by use of the convexity of the class $\breve{\mathcal E}^+(A)$, the parallelogram identity in the pre-Hil\-bert space $\mathcal E$, the strict positive definiteness of the Riesz kernel, and the finiteness of $w_{f_{q,z}}(A)$.
\end{proof}

As seen from (\ref{ext}), we actually have
\begin{equation}\label{ext''}
\xi_{A,f_{q,z}}\in\mathcal E^+(A),
\end{equation}
the class $\mathcal E^+(A)$ being strongly closed according to $(\mathcal P)$. Furthermore, again by (\ref{ext}),
\begin{equation}\label{leq}
 \xi_{A,f_{q,z}}(\mathbb R^n)\leqslant1,
\end{equation}
the mapping $\mu\mapsto\mu(\mathbb R^n)$ being vaguely l.s.c.\ on $\mathfrak M^+$ \cite[Section~IV.1, Proposition~4]{B2}.

\begin{lemma}\label{l-ext1} $\xi_{A,f_{q,z}}$ serves as the solution $\lambda_{A,f_{q,z}}$ to Problem~{\rm\ref{pr}} if and only if
\begin{equation}\label{e-ext1}
 \xi_{A,f_{q,z}}(\mathbb R^n)=1.
\end{equation}
\end{lemma}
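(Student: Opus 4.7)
\textbf{Proof proposal for Lemma~\ref{l-ext1}.} The plan is to exploit the fact that, under the standing assumption $z\in A^u\cup\overline{A}^c$, the whole functional $I_{f_{q,z}}$ is continuous, not merely lower semicontinuous, along strong limits in $\mathcal E^+(A)$. This is because of (\ref{Fen}) and the representation (\ref{integr}), which recast the "bad" term $-2qU^\mu(z)$ as a bona fide pre-Hilbert inner product with the finite-energy measure $\varepsilon_z^A$.

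For the \emph{only if} direction, if $\xi_{A,f_{q,z}}$ serves as $\lambda_{A,f_{q,z}}$, then by the very definition of Problem~\ref{pr} it must belong to $\breve{\mathcal E}^+(A)$, which forces (\ref{e-ext1}).

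For the \emph{if} direction, assume $\xi_{A,f_{q,z}}(\mathbb R^n)=1$. Combined with (\ref{ext''}), this gives $\xi_{A,f_{q,z}}\in\breve{\mathcal E}^+(A)$, so it is an admissible competitor. Choose any minimizing net $(\mu_s)_{s\in S}\in\mathbb M_{f_{q,z}}(A)$; by Lemma~\ref{l-ext00}, $\mu_s\to\xi_{A,f_{q,z}}$ strongly (and vaguely) in $\mathcal E^+$. I would then show $I_{f_{q,z}}(\mu_s)\to I_{f_{q,z}}(\xi_{A,f_{q,z}})$ by treating each of the two terms in (\ref{G}) separately. The quadratic part $I(\mu_s)=\|\mu_s\|^2$ converges to $\|\xi_{A,f_{q,z}}\|^2=I(\xi_{A,f_{q,z}})$ by continuity of the norm in the strong topology. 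For the linear term, (\ref{integr}) applied to each $\mu_s\in\mathcal E^+(A)$ and to $\xi_{A,f_{q,z}}\in\mathcal E^+(A)$ identifies $U^{\mu_s}(z)=\langle\varepsilon_z^A,\mu_s\rangle$ and $U^{\xi_{A,f_{q,z}}}(z)=\langle\varepsilon_z^A,\xi_{A,f_{q,z}}\rangle$, and these inner products converge by strong continuity (since $\varepsilon_z^A\in\mathcal E^+$ by (\ref{Fen})). Combining the two gives $I_{f_{q,z}}(\mu_s)\to I_{f_{q,z}}(\xi_{A,f_{q,z}})$.

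Since the net is minimizing, (\ref{min}) yields $I_{f_{q,z}}(\xi_{A,f_{q,z}})=w_{f_{q,z}}(A)$, and as $\xi_{A,f_{q,z}}\in\breve{\mathcal E}^+(A)$ it realises the infimum and hence is the (unique) solution $\lambda_{A,f_{q,z}}$.

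The only delicate point is the identification $U^\mu(z)=\langle\varepsilon_z^A,\mu\rangle$ for $\mu\in\mathcal E^+(A)$, but this is already recorded in (\ref{integr}) and rests on the fact that any $\mu$-measurable subset of $A$ of inner capacity zero is $\mu$-negligible whenever $\mu\in\mathcal E^+(A)$. Once that identity is in hand, the argument is essentially a continuity statement in the pre-Hilbert space $\mathcal E$, and no further subtlety is expected.
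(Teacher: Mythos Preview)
Your proposal is correct and follows essentially the same route as the paper: both arguments hinge on the strong continuity of $I_{f_{q,z}}$ on $\mathcal E^+(A)$, obtained by rewriting the linear term via (\ref{integr}) as the inner product $-2q\langle\varepsilon_z^A,\mu\rangle$ with the finite-energy measure $\varepsilon_z^A$. The only cosmetic difference is that the paper completes the square, writing $I_{f_{q,z}}(\mu)=\|\mu-q\varepsilon_z^A\|^2-q^2\|\varepsilon_z^A\|^2$ to make strong continuity immediate in one stroke, whereas you treat the quadratic and linear pieces separately; and for the ``only if'' direction the paper notes that the constant sequence $(\lambda_{A,f_{q,z}})$ is itself minimizing, forcing $\xi_{A,f_{q,z}}=\lambda_{A,f_{q,z}}$, while you simply observe that membership in $\breve{\mathcal E}^+(A)$ already gives total mass~$1$.
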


\begin{proof} $I_{f_{q,z}}(\cdot)$ is strongly continuous on
the (strongly closed) set $\mathcal E^+(A)$, because
\begin{equation*}
I_{f_{q,z}}(\mu)=\|\mu\|^2-2\langle q\varepsilon_z^A,\mu\rangle=\|\mu-q\varepsilon_z^A\|^2-q^2\|\varepsilon_z^A\|^2,\quad\mu\in\mathcal E^+(A),
\end{equation*}
which is obtained by substituting (\ref{integr}) into (\ref{G}). Therefore, in view of (\ref{ext}),
\[I_{f_{q,z}}\bigl(\xi_{A,f_{q,z}}\bigr)=\lim_{s\in S}\,I_{f_{q,z}}(\mu_s),\]
where $(\mu_s)_{s\in S}\in\mathbb M_{f_{q,z}}(A)$, and combining this with (\ref{min}) gives
\begin{equation}\label{e-ext}
 I_{f_{q,z}}\bigl(\xi_{A,f_{q,z}}\bigr)=w_{f_{q,z}}(A).
\end{equation}

Now, the "if" part of the lemma follows at once from (\ref{ext''}), (\ref{e-ext1}), and (\ref{e-ext}), whereas the "only if" part is obvious, for the trivial sequence $(\lambda_{A,f_{q,z}})$ is certainly minimizing, and hence converges strongly and vaguely to both $\xi_{A,f_{q,z}}$ and $\lambda_{A,f_{q,z}}$.\end{proof}

Henceforth, when speaking of a (compact) set $K\in\mathfrak C_A$, we always understand it to follow some $K_0\in\mathfrak C_A$ with $c(K_0)>0$, which is possible due to assumption (\ref{nonzero}).

\begin{lemma}\label{l-ext2}Given $z\in A^u\cup\overline{A}^c$, Problem~{\rm\ref{pr}} is solvable for all $K\in\mathfrak C_A$, and moreover those solutions $\lambda_{K,f_{q,z}}$ form a minimizing net:
\begin{equation}\label{min2}
 \bigl(\lambda_{K,f_{q,z}}\bigr)_{K\in\mathfrak C_A}\in\mathbb M_{f_{q,z}}(A).
\end{equation}
\end{lemma}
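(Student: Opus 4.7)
The plan is to apply the apparatus of Lemmas~\ref{l-ext00} and~\ref{l-ext1} with a compact $K\in\mathfrak C_A$ in place of $A$, and then to verify (\ref{min2}) via a standard inner-approximation argument. The only nontrivial prerequisite is that each $K\in\mathfrak C_A$ inherits from $A$ the key finiteness (\ref{Fen}), namely $\varepsilon_z^K\in\mathcal E^+$. For $z\in\overline A^c$ this follows from the distance estimate in footnote~\ref{f-proof} applied with $K$ in place of $\overline A$, since $\operatorname{dist}(z,K)\ge\operatorname{dist}(z,\overline A)>0$; for $z\in A^u$ I would invoke balayage "with a rest" (property (c) of Section~\ref{some} with $Q:=A$) to write $\varepsilon_z^K=(\varepsilon_z^A)^K$, and then Theorem~\ref{th-bal1} to conclude $(\varepsilon_z^A)^K\in\mathcal E'(K)\subset\mathcal E^+$ from $\varepsilon_z^A\in\mathcal E^+$ (Definition~\ref{def-ultra}).

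Since every compact set satisfies $(\mathcal P)$ and $c(K)>0$ by our convention, the proofs of Lemmas~\ref{l-ext00} and~\ref{l-ext1} go through verbatim with $K$ in place of $A$, producing an extremal measure $\xi_{K,f_{q,z}}\in\mathcal E^+(K)$ and reducing the existence of $\lambda_{K,f_{q,z}}$ to the identity $\xi_{K,f_{q,z}}(\mathbb R^n)=1$. This is where compactness of $K$ is decisively used: a minimizing sequence $(\mu_j)\subset\breve{\mathcal E}^+(K)$ lies in the vaguely compact set $\mathfrak M_1^+(K)$, so for any $\varphi\in C_0(\mathbb R^n)$ with $0\le\varphi\le1$ and $\varphi\equiv1$ on $K$, the constancy $\int\varphi\,d\mu_j=1$ combined with the vague convergence $\mu_j\to\xi_{K,f_{q,z}}$ from Lemma~\ref{l-ext00} forces $\xi_{K,f_{q,z}}(\mathbb R^n)\ge\int\varphi\,d\xi_{K,f_{q,z}}=1$, while the reverse inequality is (\ref{leq}).

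For the minimizing-net assertion (\ref{min2}), the inclusion $\breve{\mathcal E}^+(K)\subset\breve{\mathcal E}^+(A)$ yields $I_{f_{q,z}}(\lambda_{K,f_{q,z}})=w_{f_{q,z}}(K)\ge w_{f_{q,z}}(A)$ for every $K$, so the task reduces to proving $\inf_{K\in\mathfrak C_A}w_{f_{q,z}}(K)\le w_{f_{q,z}}(A)$. Given $\mu\in\breve{\mathcal E}^+(A)$, I would select $K_j\uparrow$ in $\mathfrak C_A$ with $\mu(K_j)\uparrow 1$ by inner regularity of $\mu$, set $\nu_j:=\mu|_{K_j}/\mu(K_j)\in\breve{\mathcal E}^+(K_j)$, and establish that $\nu_j\to\mu$ strongly in $\mathcal E^+$. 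The representation $I_{f_{q,z}}(\nu)=\|\nu-q\varepsilon_z^A\|^2-q^2\|\varepsilon_z^A\|^2$, valid on $\mathcal E^+(A)$ by (\ref{integr}), exhibits $I_{f_{q,z}}$ as strongly continuous there, so $w_{f_{q,z}}(K_j)\le I_{f_{q,z}}(\nu_j)\to I_{f_{q,z}}(\mu)$, and taking the $\mu$-infimum closes the chain.

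The main obstacle I foresee is the strong convergence $\nu_j\to\mu$: vague convergence follows easily from dominated convergence once $\mu(K_j)\uparrow 1$, but the strong version requires $\|\mu|_{K_j}\|^2\to\|\mu\|^2$, which I would extract from the pointwise monotone increase $U^{\mu|_{K_j}}\uparrow U^\mu$ together with a double application of the monotone convergence theorem, followed by rescaling via $\mu(K_j)^{-2}$. A perfect-kernel invocation (convergence of energies plus vague convergence implies strong convergence on $\mathcal E^+$) then concludes the verification.
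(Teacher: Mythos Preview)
Your proposal is correct and follows essentially the same route as the paper: the reduction $\varepsilon_z^K=(\varepsilon_z^A)^K\in\mathcal E^+$ via balayage ``with a rest'', the use of Lemma~\ref{l-ext1} on each compact $K$ together with vague preservation of unit mass, and the inner-approximation inequality $\inf_K w_{f_{q,z}}(K)\le w_{f_{q,z}}(A)$ via normalized restrictions $\nu_K$. The only difference is in the last step: the paper obtains $I_{f_{q,z}}(\mu|_K)\to I_{f_{q,z}}(\mu)$ directly from \cite[Lemma~1.2.2]{F1} (integrability of $\kappa_\alpha$ and $f_{q,z}$), whereas you pass through strong convergence $\nu_j\to\mu$ and the strong continuity of $I_{f_{q,z}}$ on $\mathcal E^+(A)$; both arguments are valid, the paper's being marginally shorter.
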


\begin{proof} Since for given $z\in A^u\cup\overline{A}^c$,
\begin{equation}\label{Fen'}
\varepsilon_z^K=\bigl(\varepsilon_z^A\bigr)^K\in\mathcal E^+\quad\text{for all $K\in\mathfrak C_A$},
\end{equation}
Lemma~\ref{l-ext1} remains valid for each $K\in\mathfrak C_A$ in place of $A$. (The former relation in (\ref{Fen'}) is clear from Section~\ref{some}, (c), and the latter from (\ref{Fen}) and Theorem~\ref{th-bal1}.)

By Lemma~\ref{l-ext1} applied to $K\in\mathfrak C_A$, $\lambda_{K,f_{q,z}}$ exists if (and only if)
$\xi_{K,f_{q,z}}(\mathbb R^n)=1$, which indeed holds true, for $\xi_{K,f_{q,z}}$ is the vague limit of any net $(\mu_s)_{s\in S}\in\mathbb M_{f_{q,z}}(K)$, whereas $\{\mu\in\mathfrak M^+(K): \mu(K)=1\}$ is vaguely closed \cite[Section~III.1.9, Corollary~3]{B2}.

Since the net $\bigl(w_{f_{q,z}}(K)\bigr)_{K\in\mathfrak C_A}$ obviously decreases and has $w_{f_{q,z}}(A)$ as a lower bound, the proof of (\ref{min2}) is reduced to that of the inequality
 \begin{equation}\label{Le}
 \lim_{K\uparrow A}\,w_{f_{q,z}}(K)\leqslant w_{f_{q,z}}(A).
 \end{equation}
 For any $\mu\in\breve{\mathcal E}^+(A)$, $\mu|_K(\mathbb R^n)\uparrow1$ as $K\uparrow A$, whence
 \begin{equation}\label{Lee}I_{f_{q,z}}(\mu)=\lim_{K\uparrow A}\,I_{f_{q,z}}(\mu|_K)=\lim_{K\uparrow A}\,I_{f_{q,z}}(\nu_K)\geqslant\lim_{K\uparrow A}\,w_{f_{q,z}}(K),\end{equation}
 where
 \[\nu_K:=\mu|_K/\mu|_K(\mathbb R^n)\in\breve{\mathcal E}^+(K),\] and (\ref{Le}) follows at once from (\ref{Lee}) by letting $\mu$ range over $\breve{\mathcal E}^+(A)$. (The former equality in (\ref{Lee}) holds true by \cite[Lemma~1.2.2]{F1}, applied to each of the $\mu$-in\-t\-eg\-r\-able functions $\kappa_\alpha$ and $f_{q,z}$, the $\mu$-integrability of $f_{q,z}$ being seen from (\ref{integr}).)
\end{proof}

\begin{lemma}\label{l-ext3}For the extremal measure $\xi:=\xi_{A,f_{q,z}}$, we have
\begin{equation}\label{Xi}
U^\xi_{f_{q,z}}\geqslant C_\xi\quad\text{n.e.\ on $A$},
\end{equation}
where
\begin{equation}\label{Cxi}
C_\xi:=\int U^\xi_{f_{q,z}}\,d\xi\in(-\infty,\infty).
\end{equation}
\end{lemma}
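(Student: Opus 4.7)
The plan is to pass to the limit along the minimizing net $(\lambda_K)_{K\in\mathfrak C_A}\in\mathbb M_{f_{q,z}}(A)$ from Lemma~\ref{l-ext2} in the characterization (\ref{ch1}) applied to each $\lambda_K:=\lambda_{K,f_{q,z}}$. Writing $\xi:=\xi_{A,f_{q,z}}$ and $C_K:=\int U^{\lambda_K}_{f_{q,z}}\,d\lambda_K$, I would first note that by Lemma~\ref{l-ext00} the net $(\lambda_K)$ converges both strongly and vaguely to $\xi$, that $\xi\in\mathcal E^+(A)$ by (\ref{ext''}), and that, applying (\ref{integr}) to $\mu\in\{\xi,\lambda_K\}\subset\mathcal E^+(A)$, we have
\[C_K=\|\lambda_K\|^2-q\langle\varepsilon_z^A,\lambda_K\rangle\quad\text{and}\quad C_\xi=\|\xi\|^2-q\langle\varepsilon_z^A,\xi\rangle.\]
Since $\varepsilon_z^A\in\mathcal E^+$ by (\ref{Fen}), Cauchy--Schwarz shows both quantities are finite real numbers, giving (\ref{Cxi}), while strong continuity of the inner product on $\mathcal E$ yields $C_K\to C_\xi$ as $K\uparrow A$.

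Next, to obtain (\ref{Xi}), I would fix $K_0\in\mathfrak C_A$ and $\nu\in\mathcal E^+(K_0)$, and for every $K\in\mathfrak C_A$ with $K\supset K_0$ apply Lemma~\ref{l-char} to the probability measure $\lambda_K\in\breve{\mathcal E}^+(K)$ to get $U^{\lambda_K}_{f_{q,z}}\geqslant C_K$ n.e.\ on $K$. Because any subset of $K$ of inner capacity zero is $\nu$-negligible (cf.\ the footnote in Theorem~\ref{l-oo'}) and $\nu\in\mathcal E^+(K)$, integrating this inequality against $\nu$ and again using (\ref{integr}) produces
\[\langle\lambda_K,\nu\rangle-q\langle\varepsilon_z^A,\nu\rangle=\int U^{\lambda_K}_{f_{q,z}}\,d\nu\geqslant C_K\,\nu(\mathbb R^n).\]
Letting $K\uparrow A$ (so that $\langle\lambda_K,\nu\rangle\to\langle\xi,\nu\rangle$ by strong convergence and $C_K\to C_\xi$ by the above) then gives
\[\int U^\xi_{f_{q,z}}\,d\nu\geqslant C_\xi\,\nu(\mathbb R^n)\qquad\text{for every $\nu\in\mathcal E^+(K_0)$, every $K_0\in\mathfrak C_A$}.\]

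Finally, I would conclude by contradiction: if (\ref{Xi}) failed, I would decompose $\{x\in A:U^\xi_{f_{q,z}}(x)<C_\xi\}=\bigcup_{m}\{x\in A:U^\xi_{f_{q,z}}(x)\leqslant C_\xi-1/m\}$, apply Lemma~\ref{str-sub} together with the inner regularity of inner capacity (noting that $U^\xi_{f_{q,z}}$ is Borel away from the single point $z$, which has inner capacity zero) to extract a compact $E\subset A$ with $c(E)>0$ and $\varepsilon>0$ on which $U^\xi_{f_{q,z}}\leqslant C_\xi-\varepsilon$ everywhere, and then test the above integrated inequality against $\nu:=\gamma_E\in\mathcal E^+(E)$, whose total mass equals $c(E)$: this forces the contradiction $C_\xi c(E)\leqslant(C_\xi-\varepsilon)c(E)<C_\xi c(E)$. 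The main obstacle I expect is controlling the cross-term $\int U^{\varepsilon_z}\,d\lambda_K$ in the passage to the limit, but (\ref{integr})---valid precisely because the assumption $z\in A^u\cup\overline{A}^c$ guarantees $\varepsilon_z^A\in\mathcal E^+$---reduces it to the strong convergence $\lambda_K\to\xi$ in the pre-Hilbert space $\mathcal E$.
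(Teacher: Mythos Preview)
Your argument is correct. The finiteness of $C_\xi$ and the convergence $C_K\to C_\xi$ follow exactly as you say from (\ref{integr}) and strong continuity of the inner product; integrating (\ref{ch1}) against $\nu\in\mathcal E^+(K_0)\subset\mathcal E^+(K)$ is legitimate since both $U^{\lambda_K}$ and $U^{\varepsilon_z}$ are $\nu$-integrable; and the contradiction step correctly extracts, via Lemma~\ref{str-sub} and the very definition of inner capacity as a supremum over compact subsets, a compact $E\subset A$ of positive capacity on which $U^\xi_{f_{q,z}}\leqslant C_\xi-\varepsilon$, which $\gamma_E$ then detects.

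The paper takes a different route. It passes to a strongly convergent \emph{subsequence} $(\lambda_{K_j})$ with $K_*\subset K_j$ and invokes Fuglede's lemma \cite[p.~166, Remark]{F1} to upgrade strong convergence in $\mathcal E^+$ to pointwise convergence $U^{\lambda_{K_j}}\to U^\xi$ n.e.\ on $K_*$; the n.e.\ inequality on $K_*$ then follows by letting $j\to\infty$ directly in (\ref{ch1}) and using countable subadditivity of inner capacity on Borel sets. Your approach instead \emph{dualizes}: you test against arbitrary $\nu\in\mathcal E^+(K_0)$ to obtain an integrated inequality, and only afterwards recover the pointwise n.e.\ statement via a separation argument with equilibrium measures. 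Your method is more self-contained, avoiding the somewhat delicate pointwise-n.e.\ convergence result; the paper's is shorter and more direct once that result is in hand.
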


\begin{proof}The finiteness of the constant $C_\xi$, introduced in (\ref{Cxi}), is obvious from (\ref{ext''}) and the $\xi$-integrability of $f_{q,z}$, the latter being clear from (\ref{integr}) applied to $\xi$.

As for (\ref{Xi}), it is enough to verify this inequality n.e.\ on $K_*$, where $K_*\in\mathfrak C_A$ is arbitrarily given. Choose a subsequence $\bigl(\lambda_{K_j,f_{q,z}}\bigr)$ of the net $\bigl(\lambda_{K,f_{q,z}}\bigr)_{K\in\mathfrak C_A}$ such that
\begin{equation}\label{xiconv}
 \lambda_{K_j,f_{q,z}}\to\xi\quad\text{strongly in $\mathcal E^+$ as $j\to\infty$},
\end{equation}
which is possible due to (\ref{ext}), (\ref{min2}), and the first-countability of the strong topology on $\mathcal E$. We can assume that $K_*\subset K_j$ for all $j$; for if not, we replace $\bigl(\lambda_{K_j,f_{q,z}}\bigr)$ by the sequence $\bigl(\lambda_{K_j\cup K_*,f_{q,z}}\bigr)$, which remains minimizing in view of the monotonicity of the net $\bigl(w_{f_{q,z}}(K)\bigr)_{K\in\mathfrak C_A}$, and hence satisfies (\ref{xiconv}) as well. Furthermore, there is no loss of generality in assuming that
\begin{equation}\label{pconv}
U^{\lambda_{K_j,f_{q,z}}}\to U^\xi\quad\text{pointwise n.e.\ on $K_*$ as $j\to\infty$},
\end{equation}
which is seen from (\ref{xiconv}) by making use of \cite[p.~166, Remark]{F1}.

But, by Lemma~\ref{l-char} applied to each of those $K_j$,
\begin{equation}\label{Kj}
 U_{f_{q,z}}^{\lambda_{K_j,f_{q,z}}}\geqslant\int U_{f_{q,z}}^{\lambda_{K_j,f_{q,z}}}\,d\lambda_{K_j,f_{q,z}}\quad\text{n.e.\ on $K_j$},
\end{equation}
and therefore n.e.\ on $K_*$. In consequence of (\ref{xiconv}),
\begin{align}
\notag\lim_{j\to\infty}\,\Bigl|\int f_{q,z}\,d\bigl(\lambda_{K_j,f_{q,z}}-\xi\bigr)\Bigr|&=q\lim_{j\to\infty}\,\bigl|\bigl\langle\varepsilon_z^A, \lambda_{K_j,f_{q,z}}-\xi\bigr\rangle\bigr|\\{}&\leqslant q\|\varepsilon_z^A\|\lim_{j\to\infty}\,\|\lambda_{K_j,f_{q,z}}-\xi\|=0
\label{mec}\end{align}
as well as
\begin{equation*}
\lim_{j\to\infty}\,\|\lambda_{K_j,f_{q,z}}\|^2=\|\xi\|^2,
\end{equation*}
which taken together gives
\begin{equation}\label{Lim}
\lim_{j\to\infty}\,\int U_{f_{q,z}}^{\lambda_{K_j,f_{q,z}}}\,d\lambda_{K_j,f_{q,z}}=\int U^\xi_{f_{q,z}}\,d\xi.
\end{equation}
(In (\ref{mec}), we have used (\ref{integr}) applied to each of the measures $\lambda_{K_j,f_{q,z}}$ and $\xi$, elements of the class $\mathcal E^+(A)$, cf.\ (\ref{ext''}), as well as the Cauchy--Schwarz (Bunyakovski) inequality applied to $\varepsilon_z^A$ and $\lambda_{K_j,f_{q,z}}-\xi$, elements of the pre-Hilbert space $\mathcal E$.)

Now, passing to the limits in (\ref{Kj}), on account of (\ref{Cxi}), (\ref{pconv}), (\ref{Lim}), and the countable subadditivity of inner capacity on Borel sets \cite[p.~144]{L}, we obtain
\[U^\xi_{f_{q,z}}\geqslant C_\xi\quad\text{n.e.\ on $K_*$},\]
whence (\ref{Xi}).\end{proof}

\section{Proofs of the main results}\label{sec-proofs}

\subsection{Proof of Theorem~\ref{th1}}\label{pr-th1} Assuming first that (\ref{capfin}) holds, we need to verify the solvability of Problem~\ref{pr} for any $q\in(0,\infty)$. To this end, it is enough to show that the extremal measure $\xi_{A,f_{q,z}}$ satisfies (\ref{e-ext1}), see Lemma~\ref{l-ext1}. But under the assumptions $(\mathcal P)$ and (\ref{capfin}), the class $\breve{\mathcal E}^+(A)$ is strongly complete \cite[Theorem~3.6]{Z-Rarx}, whereas $\xi_{A,f_{q,z}}$ is the strong limit of a minimizing net $(\mu_s)_{s\in S}\in\mathbb M_{f_{q,z}}(A)$, see (\ref{ext}). Noting that $(\mu_s)_{s\in S}\subset\breve{\mathcal E}^+(A)$, we therefore get (\ref{e-ext1}), whence $\xi_{A,f_{q,z}}=\lambda_{A,f_{q,z}}$.

Assume now that $q:=H_z$, $H_z$ being introduced by (\ref{H}), or equivalently by (\ref{n4}). Since $H_z\varepsilon_z^A(\mathbb R^n)=1$, we have, on account of (\ref{FenA}),
\begin{equation}\label{hz}
 H_z\varepsilon_z^A\in\breve{\mathcal E}^+(A),
\end{equation}
whence
\begin{equation}\label{hzz}w_{f_{H_z,z}}(A)\leqslant I_{f_{H_z,z}}\bigl(H_z\varepsilon_z^A\bigr)=\widehat{w}_{f_{H_z,z}}(A)\leqslant w_{f_{H_z,z}}(A),\end{equation}
the equality and the subsequent inequality being obtained from (\ref{qbal}) and (\ref{WWin}), respectively, with $q=H_z$. As seen from (\ref{hz}) and (\ref{hzz}), Problem~\ref{pr} for $q=H_z$ is indeed solvable, and moreover the solution $\lambda_{A,f_{H_z,z}}$ is given by the formula\footnote{Thus $w_{f_{H_z,z}}(A)=H_z^2\|\varepsilon_z^A\|^2$, see (\ref{hzz}) and (\ref{qbal}).}
\begin{equation}\label{Sol}
 \lambda_{A,f_{H_z,z}}=H_z\varepsilon_z^A.
\end{equation}
We also note that for any $A$, the inner $f_{H_z,z}$-weighted equilibrium constant equals $0$, because, in consequence of (\ref{iec}) and (\ref{Sol}),
\begin{equation}\label{iec0}c_{A,\lambda_{A,f_{H_z,z}}}=\int U^{\lambda_{A,f_{H_z,z}}}_{f_{H_z,z}}\,d\lambda_{A,f_{H_z,z}}=
H_z^2\int\bigl(U^{\varepsilon_z^A}-U^{\varepsilon_z}\bigr)\,d\varepsilon_z^A=0,\end{equation}
the last equality being derived from (\ref{ineq1}) and (\ref{FenA}) by use of the fact that any $\mu$-meas\-ur\-able subset of $A$ of inner capacity zero is $\mu$-negligible for any $\mu\in\mathcal E^+(A)$.

To complete the proof of the "if" part, it remains to consider the case where
\begin{equation}\label{SL}
q>H_z.
\end{equation}
Assume first that $C_\xi\geqslant0$, $C_\xi$ being the (finite) number introduced in (\ref{Cxi}). Then, by virtue of (\ref{Xi}), the extremal measure $\xi:=\xi_{A,f_{q,z}}$ has the property
\[U^\xi\geqslant-f_{q,z}=qU^{\varepsilon_z}\quad\text{n.e.\ on $A$},\]
whence, by use of (\ref{ineq1}) and Lemma~\ref{str-sub},
\begin{equation}\label{e-pr1}
 U^\xi\geqslant qU^{\varepsilon_z^A}\quad\text{n.e.\ on $A$}.
\end{equation}
As $\xi$ and $\varepsilon_z^A$ are of finite energy (hence, $C$-absolutely continuous) and are concentrated on $A$ (see (\ref{ext''}) and (\ref{FenA}), respectively), we conclude from (\ref{e-pr1}) by use of the principle of positivity of mass in the form stated in \cite[Theorem~1.5]{Z-Deny} that
\[\xi(\mathbb R^n)\geqslant q\varepsilon_z^A(\mathbb R^n).\]
Combining this with (\ref{leq}) and (\ref{H}) yields $q\leqslant H_z$, which however contradicts (\ref{SL}). Hence, under assumption (\ref{SL}), we necessarily have
\begin{equation}\label{Cxi0}
 C_\xi<0.
\end{equation}

Being valid n.e.\ on $A$, relation (\ref{Xi}) does hold $\xi$-a.e.\ on $\mathbb R^n$, for $\xi\in\mathcal E^+(A)$. This gives by integration
\[\int U^\xi_{f_{q,z}}\,d\xi\geqslant C_\xi\cdot\xi(\mathbb R^n),\]
whence, by (\ref{Cxi}) and (\ref{Cxi0}), $\xi(\mathbb R^n)\geqslant1$. In view of (\ref{leq}), we thus have
\[\xi(\mathbb R^n)=1,\] which according to Lemma~\ref{l-ext1} implies that $\xi$ must serve as the solution  $\lambda_{A,f_{q,z}}$.

To verify the remaining "only if" part, we only need to consider the case where
\begin{equation}\label{oif}
c_*(A)=\infty,\quad q<H_z.
\end{equation}
Because of the former assumption in (\ref{oif}), there exist $\tau_j\in\breve{\mathcal E}^+(A)$, $j\in\mathbb N$, with
\begin{equation}\label{oiff}
\|\tau_j\|<1/j,\quad S(\tau_j)\subset\{|x|\geqslant j\}.
\end{equation}
Define
\begin{equation*}
\mu_j:=q\varepsilon_z^A+M\tau_j,\quad\text{where $M:=(H_z-q)/H_z\in(0,\infty)$}.
\end{equation*}
Clearly, $\mu_j\in\breve{\mathcal E}^+(A)$ for all $j\in\mathbb N$, whence
\begin{equation}\label{ww}
I_{f_{q,z}}(\mu_j)\geqslant w_{f_{q,z}}(A).
\end{equation}
By a direct verification,
\[I_{f_{q,z}}(\mu_j)=-q^2\|\varepsilon_z^A\|^2+M^2\|\tau_j\|^2,\]
which yields, by use of the former relation in (\ref{oiff}),
\[\lim_{j\to\infty}\,I_{f_{q,z}}(\mu_j)=-q^2\|\varepsilon_z^A\|^2.\]
Combining this with (\ref{qbal}), (\ref{WWin}), and (\ref{ww}) shows that, actually,
\[\lim_{j\to\infty}\,I_{f_{q,z}}(\mu_j)=w_{f_{q,z}}(A),\]
so that
\[(\mu_j)\in\mathbb M_{f_{q,z}}(A),\]
and hence $(\mu_j)$ must converge vaguely to the extremal measure $\xi_{A,f_{q,z}}$ (Lemma~\ref{l-ext00}). But due to the latter relation in (\ref{oiff}), this vague limit is actually $q\varepsilon_z^A$, whence
\[\xi_{A,f_{q,z}}=q\varepsilon_z^A,\]
the vague topology on $\mathfrak M$ being Hausdorff. As $q<H_z$, this gives $\xi_{A,f_{q,z}}(\mathbb R^n)<1$, and hence, according to Lemma~\ref{l-ext1}, Problem~\ref{pr} is indeed unsolvable.

\subsection{Proof of Corollary~\ref{cor1}}\label{pr-cor1} Assume that $A$ is closed, $\alpha$-thin at infinity, of infinite capacity, and that $z\in A^c$ is given. Then, according to Theorem~\ref{th1}, the solution $\lambda_{A,f_{q,z}}$ to Problem~\ref{pr} exists if and only if $q\geqslant H_z$, where
\begin{equation}\label{n4'}
H_z:=1/U^{\gamma_A}(z).
\end{equation}
Here $U^{\gamma_A}$ is the equilibrium measure for $A$, see Theorem~\ref{th-eq}(i$_5$).

On the other hand, applying Theorem~\ref{th-eq}(ii$_5$)  to this $\gamma_A$ and each of the sets $A^c\cap\{|x|>r\}$, $r\in(0,\infty)$, which are obviously not $\alpha$-thin at infinity, we get
\[\limsup_{A^c\ni z\to\infty_{\mathbb R^n}}\,H_z=\infty,\]
and hence there is a sequence $(z_j)\subset A^c$ approaching $\infty_{\mathbb R^n}$ and having the property
\[\lim_{j\to\infty}\,H_{z_j}=\infty.\]
Setting $q_j:=H_{z_j}$, we therefore obtain the corollary by making use of the assertion on the solvability of Problem~\ref{pr}, reminded at the beginning of this proof.

\subsection{Proof of Corollary~\ref{cor2}}\label{pr-cor2} We begin with the following simple lemma, which will also be useful in the sequel.

\begin{lemma}\label{L-Hcon}
 For any $A\subset\mathbb R^n$ and any $y\in\overline{A}^c\cup(A^r\cap\partial A)$,
 \begin{equation}
 \lim_{x\to y}\,H_x=H_y,\label{Hcon}
\end{equation}
$H_x$, $x\in\mathbb R^n$, being introduced by means of either of {\rm(\ref{H})} or {\rm(\ref{n4})}. Hence
\begin{equation}
 \lim_{x\to y}\,H_x=1\quad\text{for any $y\in A^r\cap\partial A$}.\label{Hcon'}
\end{equation}
\end{lemma}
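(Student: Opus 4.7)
The plan is to split according to whether $A$ is inner $\alpha$-thin at infinity. If $A$ is \emph{not} inner $\alpha$-thin at infinity, then by the first branch of (\ref{n4}) one has $H_x\equiv1$ on $\mathbb R^n$, so both (\ref{Hcon}) and (\ref{Hcon'}) hold trivially. Assume henceforth that $A$ is inner $\alpha$-thin at infinity, so the inner equilibrium measure $\gamma_A$ exists and $H_x=1/U^{\gamma_A}(x)$ by (\ref{n4}); since $0<U^{\gamma_A}\leqslant1$ on $\mathbb R^n$ by (\ref{ineqne}), the claim (\ref{Hcon}) reduces to continuity of the potential $U^{\gamma_A}$ at the point $y$ in question.

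For $y\in\overline A^c$, put $r:={\rm dist}(y,\overline A)>0$. Since $S(\gamma_A)\subset\overline A$, for every $x$ with $|x-y|\leqslant r/2$ and every $t\in S(\gamma_A)$ one has $|x-t|\geqslant|y-t|-r/2\geqslant|y-t|/2$, whence the pointwise bound $|x-t|^{\alpha-n}\leqslant2^{n-\alpha}|y-t|^{\alpha-n}$ on $S(\gamma_A)$. The right-hand side is $\gamma_A$-integrable, for $U^{\gamma_A}(y)\leqslant1<\infty$, so Lebesgue's dominated convergence theorem applied to $\int|x-t|^{\alpha-n}\,d\gamma_A(t)$ as $x\to y$ yields $\lim_{x\to y}U^{\gamma_A}(x)=U^{\gamma_A}(y)$, which is (\ref{Hcon}).

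For $y\in A^r\cap\partial A$, relation (\ref{eqne}) gives $U^{\gamma_A}(y)=1$. The potential $U^{\gamma_A}$ is lower semicontinuous on $\mathbb R^n$ as the Riesz potential of a positive measure, whereas (\ref{ineqne}) provides the universal upper bound $U^{\gamma_A}\leqslant1$. Combining these two facts yields the two-sided squeeze
\[1=U^{\gamma_A}(y)\leqslant\liminf_{x\to y}U^{\gamma_A}(x)\leqslant\limsup_{x\to y}U^{\gamma_A}(x)\leqslant1,\]
so $\lim_{x\to y}U^{\gamma_A}(x)=1=U^{\gamma_A}(y)$, establishing (\ref{Hcon}) in this case as well. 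Finally, (\ref{Hcon'}) follows at once from (\ref{Hcon}): in either the thin or non-thin case one has $H_y=1$ for $y\in A^r\cap\partial A$, since in the thin case $U^{\gamma_A}(y)=1$ by (\ref{eqne}).

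The one point of delicacy is the regular-boundary case, since continuity of $U^{\gamma_A}$ is not to be expected at arbitrary boundary points; what makes it work here is precisely the interplay between the lower semicontinuity, the universal bound $U^{\gamma_A}\leqslant1$, and the characteristic identity $U^{\gamma_A}(y)=1$ valid at every inner $\alpha$-regular point. The exterior case $y\in\overline A^c$ is by contrast a routine dominated-convergence argument exploiting the positive distance from $y$ to $S(\gamma_A)\subset\overline A$.
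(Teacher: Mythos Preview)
Your proof is correct and follows essentially the same approach as the paper's: both split on whether $A$ is inner $\alpha$-thin at infinity, reduce to continuity of $U^{\gamma_A}$ at $y$, and for $y\in A^r\cap\partial A$ use the identical squeeze between lower semicontinuity and the bound $U^{\gamma_A}\leqslant1$ together with $U^{\gamma_A}(y)=1$. The only difference is cosmetic: for $y\in\overline A^c$ the paper simply asserts that $U^{\gamma_A}$ is continuous on $\overline A^c$, whereas you spell out the dominated-convergence argument using $S(\gamma_A)\subset\overline A$.
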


\begin{proof}
We can assume that $A$ is inner $\alpha$-thin at infinity, for if not, $H_x=1$ for all $x\in\mathbb R^n$, and hence (\ref{Hcon}) and (\ref{Hcon'}) hold trivially. Therefore, there exists the inner equilibrium measure $\gamma_A$ of $A$, and $H_x$, $x\in\mathbb R^n$, is now given by means of (\ref{n4'}). It is thus enough to consider the case where $y\in A^r\cap\partial A$, $U^{\gamma_A}$ being continuous on $\overline{A}^c$. But according to (\ref{eqne}), then $U^{\gamma_A}(y)=1$, which yields
\begin{equation}\label{ls}
 1=U^{\gamma_A}(y)\leqslant\liminf_{x\to y}\,U^{\gamma_A}(x),
\end{equation}
$U^{\gamma_A}$ being l.s.c.\ on $\mathbb R^n$. On the other hand, $U^{\gamma_A}\leqslant1$ on $\mathbb R^n$ (see (\ref{ineqne})), whence
\[\limsup_{x\to y}\,U^{\gamma_A}(x)\leqslant1,\]
which together with (\ref{ls}) and (\ref{n4'}) results in (\ref{Hcon'}).
\end{proof}

Assume now that $c_*(A)=\infty$, and fix $y\in A^r\cap\partial A$. By virtue of (\ref{Hcon'}), there exist $z_j\in\overline{A}^c$, $j\in\mathbb N$, such that $|z_j-y|<1/j$ and $1\leqslant H_{z_j}<1+1/j$, and applying Theorem~\ref{th1} yields the corollary for these $z_j$ and $q_j:=H_{z_j}$.

\subsection{Proof of Theorem~\ref{th-alt}}\label{pr-alt} Fix $z\in A^u\cup{\overline A}^c$. Under the hypotheses of the theorem, we necessarily have either
\begin{equation}\label{elat1}
q=H_z,
\end{equation}
or
\begin{equation}c_*(A)<\infty,\quad q<H_z.\label{elat2}
\end{equation}

We begin with the case of (\ref{elat1}). As shown in the proof of Theorem~\ref{th1}, then the latter relation in (\ref{sol}) as well as (\ref{RR'}) does indeed hold (see (\ref{Sol}) and (\ref{iec0}), respectively). Therefore, by Theorem~\ref{l-oo'} with $\zeta:=H_z\varepsilon_z$, $\lambda_{A,f_{H_z,z}}$ is uniquely characterized within $\mathcal E^+(A)$ by the equality $U^{\lambda_{A,f_{H_z,z}}}=U^{H_z\varepsilon_z}$ n.e.\ on $A$, or equivalently
\[U^{\lambda_{A,f_{H_z,z}}}_{f_{H_z,z}}=0\quad\text{n.e.\ on $A$},\]
which together with (\ref{RR'}) establishes (iii). To verify (i) and (ii), we first observe from the latter relation in (\ref{sol}), by making use of Section~\ref{some}, (c), that
\begin{equation}\label{elat3}
\lambda_{A,f_{H_z,z}}=H_z\varepsilon_z^A=\bigl(H_z\varepsilon_z^A\bigr)^A.
\end{equation}
Furthermore,
\begin{equation}\label{elat4}
\Lambda_{A,f_{H_z,z}}=\Gamma_{A,H_z\varepsilon_z}=\Gamma_{A,H_z\varepsilon_z^A},
\end{equation}
which is obvious from (\ref{L}), (\ref{RR'}), and (\ref{gammaa}) in view of $U^{\varepsilon_z}=U^{\varepsilon_z^A}$ n.e.\ on $A$ and Lemma~\ref{str-sub}. Applying Theorem~\ref{th-bal2}(i$_4$), resp.\  Theorem~\ref{th-bal1}(iii$_3$), to $\zeta:=H_z\varepsilon_z\in\mathfrak M^+$, resp.\ $\sigma:=H_z\varepsilon_z^A\in\mathcal E^+$, on account of (\ref{elat3}) and (\ref{elat4}) we arrive at (i), resp.\ (ii).

Assuming now that (\ref{elat2}) holds, we define
\[\delta:=q\varepsilon_z^A+L\gamma_A,\quad\text{where } L:=\frac{H_z-q}{H_zc_*(A)}\in(0,\infty).\]
Then obviously $\delta(\mathbb R^n)=1$; hence, in view of (\ref{eqfin}) and (\ref{FenA}),
\begin{equation*}
\delta\in\breve{\mathcal E}^+(A).
\end{equation*}
Besides, by (\ref{ineq1}) with $\zeta:=\varepsilon_z$, (\ref{eqne}), and Lemma~\ref{str-sub}, we have $U^\delta_{f_{q,z}}=L$ n.e.\ on $A$, whence $\delta$-a.e., which implies that, actually,
\begin{equation*}
U^\delta_{f_{q,z}}=L=\int U^\delta_{f_{q,z}}\,d\delta\quad\text{n.e.\ on $A$}.
\end{equation*}
By comparing this with (\ref{ch1}) and (\ref{const}) in Lemma~\ref{l-char}, we get
\begin{equation}\label{elat7}\lambda_{A,f_{q,z}}=\delta,\quad c_{A,f_{q,z}}=L=\frac{H_z-q}{H_zc_*(A)},\end{equation}
cf.\ (\ref{iec}), thereby establishing the former relation in (\ref{sol}) as well as (\ref{R'}).

As $\varepsilon_z^A=(\varepsilon_z^A)^A$ and $\gamma_A=(\gamma_A)^A=\bigl((\gamma_A)^A\bigr)^A$, the latter being derived from (\ref{eqfin'}), $\lambda_{A,f_{q,z}}$ is representable in either of the forms
\begin{equation}\label{f1}
 \lambda_{A,f_{q,z}}=\vartheta^A=(\vartheta^A)^A,
\end{equation}
where
\begin{equation}\label{f2}\vartheta:=q\varepsilon_z+c_{A,f_{q,z}}\gamma_A.\end{equation}
Therefore, by virtue of Theorem~\ref{l-oo'} with $\zeta:=\vartheta$, $\lambda_{A,f_{q,z}}$ is uniquely characterized within $\mathcal E^+(A)$ by the equality $U^{\lambda_{A,f_{q,z}}}=U^\vartheta$ n.e.\ on $A$, or equivalently
\[U^{\lambda_{A,f_{q,z}}}_{f_{q,z}}=c_{A,f_{q,z}}\quad\text{n.e.\ on $A$},\]
the equivalence being obvious from (\ref{eqne}) by utilizing Lemma~\ref{str-sub}. This proves (iii).

It is easy to see, by use of (\ref{L}), (\ref{gammaa}), (\ref{ineq1}), (\ref{eqne}), (\ref{f2}) and Lemma~\ref{str-sub}, that
\begin{equation}\label{f2'}\Lambda_{A,f_{q_z,z}}=\Gamma_{A,\vartheta}=\Gamma_{A,\vartheta^A}.\end{equation}
Now, applying Theorem~\ref{th-bal2}(i$_4$), resp.\  Theorem~\ref{th-bal1}(iii$_3$), to $\zeta:=\vartheta\in\mathfrak M^+$, resp.\ $\sigma:=\vartheta^A\in\mathcal E^+$, on account of (\ref{f1}) and (\ref{f2'}) we arrive at (i), resp.\ (ii).

\subsection{Proof of Theorem~\ref{th2}}\label{pr-th2} We begin by showing that in the case where $q>H_z$, the support $S(\lambda_{A,f_{q,z}})$ is compact. (The solution $\lambda_{A,f_{q,z}}$ does exist, see Theorem~\ref{th1}.)

To this end, we shall first verify that
\begin{equation}\label{i0}
 c_{A,f_{q,z}}<0,
\end{equation}
$c_{A,f_{q,z}}$ being the inner $f_{q,z}$-weighted equilibrium constant (see (\ref{iec})). Noting from Lemma~\ref{l-ext1} that $\lambda_{A,f_{q,z}}$ necessarily equals the extremal measure $\xi:=\xi_{A,f_{q,z}}$, we get
\[c_{A,f_{q,z}}=\int U^{\lambda_{A,f_{q,z}}}_{f_{q,z}}\,d\lambda_{A,f_{q,z}}=\int U^\xi_{f_{q,z}}\,d\xi,\]
whence (\ref{i0}), for $\int U^\xi_{f_{q,z}}\,d\xi<0$ by virtue of (\ref{Cxi}) and (\ref{Cxi0}).

Assume to the contrary that $S(\lambda_{A,f_{q,z}})$ is noncompact. In consequence of (\ref{ch2}) and (\ref{const}), then there is a sequence $(x_j)\subset A$ convergent to $\infty_{\mathbb R^n}$ and such that
\[U^{\lambda_{A,f_{q,z}}}_{f_{q,z}}(x_j)=c_{A,f_{q,z}},\]
whence
\[0<-c_{A,f_{q,z}}\leqslant qU^{\varepsilon_z}(x_j),\]
the former inequality being valid by virtue of (\ref{i0}). Letting now $j\to\infty$, we arrive at a contradiction, for $U^{\varepsilon_z}(x)\to0$ when $x\to\infty_{\mathbb R^n}$. This establishes (i$_1$).

Under the requirements of (ii$_1$), assume first that $q=H_z$. Then, according to the latter relation in (\ref{sol}), $\lambda_{A,f_{q,z}}$ coincides up to a positive factor with the $\alpha$-har\-mon\-ic measure $\varepsilon_z^A$, and (\ref{R}) follows directly from \cite[Theorem~4.1]{Z-bal2}, describing the support of $\varepsilon_z^A$.
If now $q<H_z$ and $c_*(A)<\infty$,\footnote{Recall that in the case where $q<H_z$ and $c_*(A)=\infty$, $\lambda_{A,f_{q,z}}$ fails to exist (Theorem~\ref{th1}).} then, by virtue of (\ref{R'}) and the former relation in (\ref{sol}), $\lambda_{A,f_{q,z}}$ is the linear combination with positive coefficients of the $\alpha$-harmonic measure $\varepsilon_z^A$ and the equilibrium measure $\gamma_A$, and (\ref{R}) follows at once by combining the above-mentioned \cite[Theorem~4.1]{Z-bal2} with \cite[Theorem~7.2]{Z-bal}, the latter describing the support of $\gamma_A$.

\subsection{Proof of Theorem~\ref{th3}}\label{pr-th3} It is seen from (\ref{sol}) and (\ref{elat7}) that for any $z\in\overline{A}^c$, $\lambda_{A,f_{H_z,z}}=H_z\varepsilon_z^A$, whereas in the case where $q<H_z$ and $c_*(A)<\infty$,
\begin{equation*}\lambda_{A,f_{q,z}}=q\varepsilon_z^A+\frac{H_z-q}{H_zc_*(A)}\gamma_A.\end{equation*}
On account of Lemma~\ref{L-Hcon}, both (i$_2$) and (ii$_2$) are therefore reduced to
\[
\lim_{\overline{A}^c\ni z\to y\in\overline{A}^c\cup(A^r\cap\partial A)}\,\varepsilon^A_z(\varphi)=\varepsilon^A_y(\varphi)\quad\text{for all $\varphi\in C_0(\mathbb R^n)$},\]
which indeed holds true by virtue of \cite[Theorem~3.1]{Z-bal2}.

\section{Examples}\label{sec-appl}

In this section, $n=3$ and $\alpha=2$. The aim of the following simple Example~\ref{ex4} is to show that, for given $A$ and $z$, the solution $\lambda_{A,f_{q,z}}$ might be independent of $q$.

\begin{example}\label{ex4}In Problem~\ref{pr} with $A:=S_{0,1}$ and $z:=0$, the (unique) solution $\lambda_{A,f_{q,z}}$ does exist for any $q\in(0,\infty)$ (Theorem~\ref{th1}), the capacity of $S_{0,1}$ being finite. By symmetry reasons, for each $q$, $\lambda_{A,f_{q,z}}$ is uniformly distributed over $S_{0,1}$, hence it equals the equilibrium measure $\gamma_{S_{0,1}}$. Since $I(\gamma_{S_{0,1}})=U^{\gamma_{S_{0,1}}}(0)=1$, we also have
\begin{equation*}
w_{f_{q,z}}(A)=1-2q,\quad c_{A,f_{q,z}}=1-q,
\end{equation*}
the latter equality being in agreement with (\ref{R'})--(\ref{RRR'}) and (\ref{elat7}).
\end{example}

In Examples~\ref{ex1}--\ref{ex3} below, we consider the rotation bodies
\begin{equation*}\Delta_i:=\bigl\{x\in\mathbb R^3: \ 1\leqslant x_1<\infty, \
x_2^2+x_3^2\leqslant\varrho_i^2(x_1)\bigr\},\quad i=1,2,3,\end{equation*}
where
\begin{align*}
\varrho_1(x_1)&:=x_1^{-s}\quad\text{with\ }s\in[0,\infty),\\
\varrho_2(x_1)&:=\exp(-x_1^s)\quad\text{with\ }s\in(0,1],\\
\varrho_3(x_1)&:=\exp(-x_1^s)\quad\text{with\ }s\in(1,\infty),
\end{align*}
and define
\[F_i:=\Delta_i\cup\Delta,\]
$\Delta$ being introduced in Example~\ref{ex} with $\beta\in(1,\infty)$. Then each of $F_i$, $i=1,2,3$, is closed, and $F_i^u$, the set of all $2$-ultrairregular points for $F_i$, is reduced to the origin:
\[F_i^u=\{0\}.\]
Furthermore, $F_1$ is not $2$-thin at infinity, $F_2$ is $2$-thin (but not $2$-ul\-tra\-thin) at infinity, whereas $F_3$ is $2$-ul\-tra\-thin at infinity (cf.\ \cite[Example~2.1]{Z-bal2}).

\begin{example}\label{ex1}For any $z\in\{0\}\cup F_1^c$, the solution $\lambda_{F_1,f_{q,z}}$ to Problem~\ref{pr} with $A:=F_1$ exists if and only if $q\geqslant1$. (This follows from Theorem~\ref{th1}, for $H_{F_1,z}=1$, the set $F_1$ not being $2$-thin at infinity.) Furthermore, if $q=1$, then $\lambda_{F_1,f_{1,z}}=\varepsilon_z^{F_1}$ (Theorem~\ref{th-alt}),
hence $S(\lambda_{F_1,f_{1,z}})$ is noncompact being equal to $\partial F_1$ (Theorem~\ref{th2}(ii$_1$)),
whereas $S(\lambda_{F_1,f_{q,z}})$ is already compact for all $q>1$ (Theorem~\ref{th2}(i$_1$)). We also note that, by virtue of (\ref{RR'}) and (\ref{RRR'}), $c_{F_1,f_{1,z}}=0$, whereas $c_{F_1,f_{q,z}}<0$ for all $q>1$.
\end{example}

\begin{example}\label{ex2} For any $z\in\{0\}\cup F_2^c$, $\lambda_{F_2,f_{q,z}}$ exists if and only if
\[q\geqslant H_z:=1/U^{\gamma_{F_2}}(z),\]
$\gamma_{F_2}$ being the equilibrium measure of $F_2$ (Theorem~\ref{th1}). (This $\gamma_{F_2}$ does exist since $F_2$ is $2$-thin at infinity; though $\gamma_{F_2}(\mathbb R^n)=I(\gamma_{F_2})=\infty$, $F_2$ being of infinite capacity.)
Furthermore, if $q=H_z$, then $\lambda_{F_2,f_{H_z,z}}=H_z\varepsilon_z^{F_2}$ (Theorem~\ref{th-alt}),
hence $S(\lambda_{F_2,f_{H_z,z}})$ is noncompact being equal to $\partial F_2$ (Theorem~\ref{th2}(ii$_1$)),
whereas $S(\lambda_{F_2,f_{q,z}})$ is already compact for all $q>H_z$ (Theorem~\ref{th2}(i$_1$)). Besides, according to (\ref{RR'}) and (\ref{RRR'}), $c_{F_2,f_{H_z,z}}=0$, whereas $c_{F_2,f_{q,z}}<0$ for all $q>H_z$.
\end{example}

\begin{example}\label{ex3} For any $z\in\{0\}\cup F_3^c$ and for any $q\in(0,\infty)$, $\lambda_{F_3,f_{q,z}}$ does exist (Theorem~\ref{th1}), the capacity of $F_3$ being finite; moreover, for all $q\leqslant H_z$, $\lambda_{F_3,f_{q,z}}$ is given by means of (\ref{sol}). Hence, for all $q\leqslant H_z$, $S(\lambda_{F_3,f_{q,z}})$ is noncompact being equal to $\partial F_3$ (Theorem~\ref{th2}(ii$_1$)),
whereas it is already compact for all $q>H_z$ (Theorem~\ref{th2}(i$_1$)). We also note that, by virtue of (\ref{R'})--(\ref{RRR'}),
$c_{F_3,f_{q,z}}$ is strictly positive for all $q<H_z$, strictly negative for all $q>H_z$, and equal to zero otherwise. Moreover, for $q<H_z$, $c_{F_3,f_{q,z}}$ is representable by means of formula (\ref{elat7}).
\end{example}

\section{Acknowledgements} The author thanks Krzysztof Bogdan for helpful discussions on the paper \cite{KB}. 
This work was supported by a grant from the Simons Foundation (1030291, N.V.Z.).


\begin{thebibliography}{99}
\setlength{\parskip}{1.2ex plus 0.5ex minus 0.2ex}

\bibitem{BH} Bliedtner, J., Hansen, W.: Potential Theory. An Analytic and Probabilistic Approach to Balayage. Springer, Berlin
(1986)

\bibitem{KB} Bogdan, K.: The boundary Harnack principle for the fractional Laplacian. Studia Math. {\bf 123}, 43--80 (1997)

\bibitem{KB'} Bogdan, K., Kulczycki T., Kwa\'{s}nicki M.: Estimates and structure of $\alpha$-har\-mon\-ic functions. Probab. Theory Relat. Fields {\bf 140}, 345–381 (2008)

\bibitem{BHS} Borodachov, S.V., Hardin, D.P., Saff, E.B.: Discrete Energy on Rectifiable Sets. Springer, Berlin (2019)

\bibitem{B2} Bourbaki, N.: Integration. Chapters~1--6.
Springer, Berlin (2004)

\bibitem{Brelot} Brelot, M.: Sur le r\^{o}le du point \`{a} l'infini dans la th\'{e}orie des fonctions harmoniques. Ann. \'{E}c. Norm. Sup. {\bf 61}, 301--332 (1944)

\bibitem{Br} Brelot, M.: On Topologies and Boundaries in Potential Theory. Lecture Notes in Math. {\bf 175}, Springer,
Berlin (1971)

\bibitem{Ca1} Cartan, H.: Th\'eorie du potentiel newtonien: \'energie, capacit\'e, suites de potentiels. Bull. Soc. Math. France
    {\bf 73}, 74--106 (1945)

\bibitem{Ca2} Cartan, H.: Th\'eorie g\'en\'erale du balayage en potentiel newtonien. Ann. Univ. Fourier Grenoble {\bf 22},
    221--280 (1946)

\bibitem{CSW}Chafa\"{\i}, D., Saff, E.B., Womersley, R.S. Threshold condensation to singular support for a Riesz equilibrium problem. Anal. Math. Phys. {\bf 13}, 19 (2023)

\bibitem{D1} Deny, J.: Les potentiels d'\'energie finie. Acta Math.\ {\bf 82}, 107--183 (1950)

\bibitem{Doob} Doob, J.L.: Classical Potential Theory and Its Probabilistic Counterpart. Springer, Berlin (1984)

\bibitem{Dr0} Dragnev, P.D., Orive, R., Saff, E.B., Wielonsky F.: Riesz energy problems with external fields and related theory.
Constr. Approx. {\bf 57}, 1--43 (2023)

\bibitem{E2}
Edwards, R.E.: Functional Analysis. Theory and Applications. Holt,
Rinehart and Winston, New York (1965)

\bibitem{Fr}Frostman, O.: Potentiel d'\'{e}quilibre et Capacit\'{e} des Ensembles avec Quelques Applications a la Theorie des Fonctions. C.~W.~K. Gleerup (1935)

\bibitem{F1} Fuglede, B.: On the theory of potentials in locally compact spaces. Acta Math. {\bf 103}, 139--215  (1960)

\bibitem{F71} Fuglede, B.: The quasi topology associated with a countably subadditive set function. Ann. Inst. Fourier Grenoble
    {\bf 21}, 123--169 (1971)

\bibitem{Fu4} Fuglede, B.: Capacity as a sublinear functional generalizing an integral. Mat. Fys. Medd. Dan. Vid. Selsk. 38 (7) (1971)

\bibitem{Gau}
Gauss, C.F.: Allgemeine Lehrs\"atze in Beziehung auf die im verkehrten Verh\"altnisse des
  Quadrats der Entfernung wirkenden Anziehungs-- und Absto\ss
  ungs--Kr\"afte (1839). Werke 5, 197--244 (1867)

\bibitem{KM} Kurokawa, T., Mizuta, Y.: On the order at infinity of Riesz potentials. Hiroshima Math. J. {\bf 9}, 533--545 (1979)

\bibitem{L} Landkof, N.S.: Foundations of Modern Potential Theory. Springer, Berlin (1972)

\bibitem{O} Ohtsuka, M.: On potentials in locally compact spaces. J. Sci. Hiroshima Univ. Ser.\ A-I {\bf 25}, 135--352 (1961)

\bibitem{R} Riesz, M.: Int\'egrales de Riemann--Liouville et potentiels. Acta Szeged {\bf 9}, 1--42 (1938)

\bibitem{ST} Saff, E.B., Totik, V: Logarithmic Potentials with External Fields. Springer, Berlin (1997)

\bibitem{Z5a} Zorii, N.V.: Equilibrium potentials with external fields. Ukrainian Math. J. {\bf 55},
1423--1444 (2003)

\bibitem{Z-bal} Zorii, N.: A theory of inner Riesz balayage and its applications. Bull. Pol. Acad. Sci. Math. {\bf 68}, 41--67
    (2020)

\bibitem{Z-bal2} Zorii, N.: Harmonic measure, equilibrium measure, and thinness at infinity in the theory of Riesz potentials.
    Potential Anal. {\bf 57}, 447--472 (2022)

\bibitem{Z-arx1} Zorii, N.: Balayage of measures on a locally compact space.
Anal. Math. {\bf 48}, 249--277 (2022)

\bibitem{Z-arx} Zorii, N.: On the theory of balayage on locally compact spaces. Potential Anal. (2022).
    https://doi.org/10.1007/s11118-022-10024-x

\bibitem{Z-arx-22} Zorii, N.: On the theory of capacities on locally compact spaces and its interaction with the theory of
    balayage. Potential Anal. {\bf 59}, 1345--1379 (2023)

\bibitem{Z-Deny} Zorii, N.: On the role of the point at infinity in Deny's principle of positivity of mass for Riesz potentials.
    Anal. Math. Phys. {\bf 13}, 38 (2023)

\bibitem{Z-Rarx} Zorii, N.: Minimum Riesz energy problems with external fields. J. Math. Anal. Appl. {\bf 526}, 127235 (2023)

\bibitem{Z-Oh} Zorii, N.: Minimum energy problems with external fields on locally compact spaces. Constr. Approx. (2023).
https://doi.org/10.1007/s00365-023-09643-3

\bibitem{Z-pseudo} Zorii, N.: Inner Riesz pseudo-balayage and its applications to minimum energy problems with external fields. Potential Anal. (2023). https://doi.org/10.1007/s11118-023-10087-4

\end{thebibliography}
\end{document}